\newcommand{\xticklength}{0.05}
\newtheorem{theorem}{Theorem}
\newtheorem{proposition}[theorem]{Proposition}
\DeclareMathOperator*{\divergenz}{div}              %
\DeclareMathOperator*{\Ss}{S}
\newcommand{\N}{\mathbb{N}}
\newcommand{\R}{\mathbb{R}}
\newcommand*\diff{\mathop{}\!\mathrm{d}}
\newcommand{\Lp}[1]{L^{#1}(\Omega)}
\newcommand{\Lprand}[1]{L^{#1}(\partial\Omega)}
\newcommand{\Wp}[1]{W^{1,#1}(\Omega)}
\newcommand{\Wpzero}[1]{W^{1,#1}_0(\Omega)}
\newcommand{\lan}{\langle}
\newcommand{\ran}{\rangle}
\newcommand{\eps}{\varepsilon}
\newcommand{\ph}{\varphi}
\newcommand{\into}{\int_{\Omega}}
\newcommand{\weak}{\rightharpoonup}
\newcommand{\Linf}{L^{\infty}(\Omega)}
\renewcommand{\l}{\left}
\renewcommand{\r}{\right}
\newcommand{\WH}{W^{1, \mathcal{H}}(\Omega)}
\numberwithin{theorem}{section}
\numberwithin{equation}{section}
\newcommand{\nc}{\mathcal{N}}
\newcommand{\e}{\mathbb \varepsilon}
\def\le{\leqslant}
\def\phi{\varphi}
\def\ykh#1{\left(#1\right)}
\title[Parametric superlinear double phase problems with singular term]{Parametric superlinear double phase problems with singular term and critical growth on the boundary}
\author[\'{A}.\,Crespo-Blanco]{\'{A}ngel Crespo-Blanco}
\address[\'{A}.\,Crespo-Blanco]{Technische Universit\"{a}t Berlin, Institut f\"{u}r Mathematik, Stra\ss e des 17.\,Juni 136, 10623 Berlin, Germany}
\email{crespo@math.tu-berlin.de}
\author[N.\,S.\,Papageorgiou]{Nikolaos S.\,Papageorgiou}
\address[N.\,S.\,Papageorgiou]{National Technical University, Department of Mathematics, Zografou Campus, Athens 15780, Greece}
\email{npapg@math.ntua.gr}
\author[P.\,Winkert]{Patrick Winkert}
\address[P.\,Winkert]{Technische Universit\"{a}t Berlin, Institut f\"{u}r Mathematik, Stra\ss e des 17.\,Juni 136, 10623 Berlin, Germany}
\email{winkert@math.tu-berlin.de}
\subjclass{35J15, 35J62, 35J75, 58J05}
\keywords{Critical growth, Double phase operator, equivalent norm, fibering method, existence of solutions, Musielak-Orlicz Sobolev space, Nehari manifold, singular problems}
\begin{document}

\begin{abstract}
	In this paper we study quasilinear elliptic equations driven by the double phase operator along with a reaction that has a singular and a parametric superlinear term and with a nonlinear Neumann boundary condition of critical growth. Based on a new equivalent norm for Musielak-Orlicz Sobolev spaces and the Nehari manifold along with the fibering method we prove the existence of at least two weak solutions provided the parameter is sufficiently small. 
\end{abstract}

\maketitle

\section{Introduction}

Given a bounded domain $\Omega\subset \R^N $, $N\geq 2$, with Lipschitz boundary $\partial \Omega$, we study the following singular double phase problem with critical growth on the boundary
\begin{equation}\label{problem}
	\begin{aligned}
		-\divergenz\big(|\nabla u|^{p-2} \nabla u+ \mu(x) |\nabla u|^{q-2} \nabla u\big)+\alpha(x)u^{p-1}&= \zeta(x)u^{-\kappa}+\lambda u^{q_1-1} && \text{in } \Omega, \\
		\big(|\nabla u|^{p-2} \nabla u+ \mu(x) |\nabla u|^{q-2} \nabla u\big) \cdot \nu&= -\beta(x)u^{p_* -1} && \text{on } \partial\Omega,
	\end{aligned}
\end{equation}
where $\lambda>0$ and  $\nu(x)$ is the outer unit normal of $\Omega$ at the point $x \in \partial\Omega$. The operator involved is the so-called double phase operator given by
\begin{align*}
	\divergenz \big(|\nabla u|^{p-2} \nabla u+ \mu(x) |\nabla u|^{q-2} \nabla u\big)\quad \text{for }u\in \WH,
\end{align*}
which is related to the energy functional
\begin{align}\label{integral_minimizer}
	\omega \mapsto \int_\Omega \big(|\nabla  \omega|^p+\mu(x)|\nabla  \omega|^q\big)\diff x.
\end{align}
Functionals of type \eqref{integral_minimizer}  have first been studied by Zhikov \cite{Zhikov-1986} in order to provide models for strongly anisotropic materials. The main characteristic of the functional defined in \eqref{integral_minimizer} is the change of ellipticity on the set where the weight function is zero, that is, on the set $\{x\in \Omega: \mu(x)=0\}$. To be more precise, the energy density of \eqref{integral_minimizer} exhibits ellipticity in the gradient of order $q$ on the points $x$ where $\mu(x)$ is positive and of order $p$ on the points $x$ where $\mu(x)$ vanishes. Further results on regularity of minimizers of \eqref{integral_minimizer} can be found in the papers of Baroni-Colombo-Mingione \cite{Baroni-Colombo-Mingione-2015,Baroni-Colombo-Mingione-2018}, Colombo-Mingione \cite{Colombo-Mingione-2015a,Colombo-Mingione-2015b}, De Filippis-Mingione \cite{DeFilippis-Mingione-2021}, Marcellini \cite{Marcellini-1991,Marcellini-1989b} and Ragusa-Tachikawa \cite{Ragusa-Tachikawa-2020}.

We suppose the  following assumptions:
\begin{enumerate}
	\item[(H):]
	\begin{enumerate}
		\item[(i)]
		$1<p<N$, $p<q<p^*$ and $0 \leq \mu(\cdot)\in \Linf$;
		\item[(ii)]
		$0<\kappa<1$ and $q_1 \in (\max\{q,p_* \},p^*)$, where
		\begin{align}\label{critical_exponents}
			p^*=\frac{Np}{N-p}\quad\text{and}\quad p_*=\frac{(N-1)p}{N-p}
		\end{align}
		are the critical exponents to $p$;
		\item[(iii)]
		$\alpha \in \Linf$ with $\alpha(x) \geq 0$ for a.\,a.\,$x\in\Omega$ and $\alpha\not\equiv 0$;
		\item[(iv)]
		$\beta\in L^\infty(\partial\Omega)$ with $\beta(x)\geq 0$ for a.\,a.\,$x\in\partial\Omega$;
		\item[(v)]
		$\zeta \in \Linf$ and $\zeta(x) > 0$ for a.\,a.\,$x\in\Omega$.
	\end{enumerate}
\end{enumerate}

We call a function $u \in\WH$ a weak solution of problem \eqref{problem} if $\zeta(\cdot)u^{-\kappa} h \in \Lp{1}$, $u(x)> 0$ for a.\,a.\,$x\in\Omega$ and 
\begin{equation}\label{weak_solution}
	\begin{split}
		& \into \big(|\nabla u|^{p-2} \nabla u+ \mu(x) |\nabla u|^{q-2} \nabla u\big) \cdot \nabla h \diff x\\
		&\quad +\into \alpha(x)u^{p-1}h\diff x+\int_{\partial\Omega}\beta(x)u^{p_* -1}h\diff \sigma\\
		&= \into \zeta(x)u^{-\kappa} h \diff x+\lambda \into u^{q_1-1}h\diff x
	\end{split}
\end{equation}
is satisfied for all test functions $h \in \WH$. Based on \textnormal{(H)} it is easy to see that the definition of a weak solution in \eqref{weak_solution} is well-defined. Denoting by  $\Theta_\lambda\colon\WH\to \R$ the energy functional corresponding  to problem \eqref{problem}, the main result in this paper is the following theorem.

\begin{theorem}\label{main_result}
	If hypotheses \textnormal{(H)}  hold, then there exists $\lambda^*>0$ such that for all $\lambda \in (0,\lambda^*]$ problem \eqref{problem} has at least two weak solutions $u_\lambda, v_\lambda \in \WH$ such that $\Theta_\lambda(u_\lambda)<0<\Theta_\lambda(v_\lambda)$.
\end{theorem}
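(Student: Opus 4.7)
The plan is to apply the Nehari manifold method combined with the fibering technique. Associated with \eqref{problem} is the energy functional $\Theta_\lambda\colon\WH\to\R$ given by
\begin{align*}
\Theta_\lambda(u)&=\frac{1}{p}\into|\nabla u|^p\diff x+\frac{1}{q}\into\mu(x)|\nabla u|^q\diff x+\frac{1}{p}\into\alpha(x)|u|^p\diff x\\
&\quad+\frac{1}{p_*}\intor\beta(x)(u^+)^{p_*}\diff\sigma-\frac{1}{1-\kappa}\into\zeta(x)(u^+)^{1-\kappa}\diff x-\frac{\lambda}{q_1}\into(u^+)^{q_1}\diff x.
\end{align*}
Because of the singular summand, $\Theta_\lambda$ is merely continuous on the positive cone of $\WH$, but for every $0\le u\in\WH\setminus\{0\}$ the fibering map $\phi_u(t)\ldef\Theta_\lambda(tu)$ is of class $C^1$ on $(0,\infty)$, which is enough to carry out the Nehari/fibering framework provided one restricts to positive admissible test directions.

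Since $1-\kappa<p\le q<p_*<q_1$, the asymptotics of $\phi_u$ yield $\phi_u(t)<0$ for small $t>0$ and $\phi_u(t)\to-\infty$ as $t\to\infty$. Using the new equivalent norm for $\WH$ together with the embeddings $\WH\hookrightarrow L^{q_1}(\Omega)$ and $\WH\hookrightarrow L^{p_*}(\partial\Omega)$, a careful analysis of $\phi_u'$ shows that for $\lambda>0$ small the equation $\phi_u'(t)=0$ has exactly two positive roots $t^+(u)<t^-(u)$, corresponding to a strict local minimum with $\phi_u(t^+(u))<0$ and a strict local maximum with $\phi_u(t^-(u))>0$. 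Introducing the Nehari manifold $\mathcal{N}_\lambda=\{0\le u\ne 0\colon\phi_u'(1)=0\}$ and the decomposition $\mathcal{N}_\lambda^\pm=\{u\in\mathcal{N}_\lambda\colon\pm\phi_u''(1)>0\}$, quantitative estimates produce a threshold $\lambda^*>0$ such that the degenerate set $\mathcal{N}_\lambda^0=\emptyset$ for every $\lambda\in(0,\lambda^*]$; consequently the fibering times $t^\pm(\cdot)$ depend continuously on $u$ and $\mathcal{N}_\lambda^+\cap\mathcal{N}_\lambda^-=\emptyset$.

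Minimizing $\Theta_\lambda$ on $\mathcal{N}_\lambda^+$ is the easier case. By construction $m_\lambda^+\ldef\inf_{\mathcal{N}_\lambda^+}\Theta_\lambda<0$, and weak lower semicontinuity (the critical trace term carries a favourable positive sign) together with continuity of $u\mapsto t^+(u)$ delivers a minimizer $u_\lambda\in\mathcal{N}_\lambda^+$. A Lagrange multiplier computation, together with the fact that the singular right-hand side yields a pointwise lower bound (e.g.\ in terms of $\dist(x,\partial\Omega)$) which both forces $u_\lambda(x)>0$ a.e.\ in $\Omega$ and guarantees $\zeta(\cdot)u_\lambda^{-\kappa}h\in\Lp{1}$ for every $h\in\WH$, then shows that $u_\lambda$ is a weak solution of \eqref{problem} with $\Theta_\lambda(u_\lambda)=m_\lambda^+<0$.

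The main obstacle is producing the second solution $v_\lambda\in\mathcal{N}_\lambda^-$ with $m_\lambda^-\ldef\inf_{\mathcal{N}_\lambda^-}\Theta_\lambda>0$ (positivity following from the Nehari identity combined with a uniform lower bound on $\|u\|$ for $u\in\mathcal{N}_\lambda^-$), because the critical boundary term $\intor\beta(x)u^{p_*}\diff\sigma$ destroys compactness of Palais-Smale sequences at high energies. The idea is to restore compactness below the concentration threshold $c^*$ dictated by the best constant in the Sobolev trace embedding $W^{1,p}(\Omega)\hookrightarrow L^{p_*}(\partial\Omega)$: testing $\Theta_\lambda$ against a suitably rescaled family of trace concentrations centred at a boundary point and exploiting the superlinear perturbation with $q_1>p_*$ to lower the energy, one verifies $\sup_{t\ge 0}\Theta_\lambda(tu_\varepsilon)<c^*$ provided $\lambda$ is sufficiently small. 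Local Palais-Smale compactness below $c^*$ then permits extracting a strongly convergent minimizing sequence on $\mathcal{N}_\lambda^-$; the limit $v_\lambda$ is a weak solution of \eqref{problem} by the same Lagrange multiplier argument, with $\Theta_\lambda(v_\lambda)=m_\lambda^->0$. Since $\mathcal{N}_\lambda^+\cap\mathcal{N}_\lambda^-=\emptyset$, one has $u_\lambda\ne v_\lambda$, completing the proof.
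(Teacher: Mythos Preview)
Your overall Nehari/fibering framework matches the paper's, but the treatment of $\mathcal{N}_\lambda^-$ rests on a genuine misconception. You write that ``the critical boundary term $\intor\beta(x)u^{p_*}\diff\sigma$ destroys compactness of Palais--Smale sequences'' and propose a Brezis--Nirenberg type argument with trace concentrations below a threshold $c^*$. But look at the sign: in $\Theta_\lambda$ the boundary integral enters with coefficient $+\tfrac{1}{p_*}$, i.e.\ it sits on the \emph{coercive} side and is in fact absorbed into the equivalent norm (this is precisely the point of Proposition~\ref{proposition_equivalent_norm} and the modular $\rho$ in \eqref{modular}). There is no concentration threshold to beat; for trace concentrations $u_\varepsilon$ one has $\intor\beta u_\varepsilon^{p_*}\diff\sigma\to+\infty$, which drives $\sup_{t\ge 0}\Theta_\lambda(tu_\varepsilon)$ up, not down. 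Your proposed machinery is tailored to problems where the critical term lies on the reaction side (negative sign in the energy), which is not the situation here.

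The paper avoids concentration-compactness entirely. For both $\mathcal{N}_\lambda^+$ and $\mathcal{N}_\lambda^-$ it argues directly: given a minimizing sequence $v_n\weak v_\lambda$ with $v_n\to v_\lambda$ in $L^{q_1}(\Omega)$, one shows by contradiction---using the fibering scalings $t^1_{v_\lambda}$, $t^2_{v_\lambda}$ and the identity $\psi_u'(t)=t^{q_1-1}\big(\eta_u(t)-\lambda\|u\|_{q_1}^{q_1}\big)$---that $\|v_n\|_{1,p}$, $\|\nabla v_n\|_{q,\mu}$ and $\|v_n\|_{p_*,\beta,\partial\Omega}$ each converge to the corresponding quantity for $v_\lambda$; uniform convexity of the modular then yields strong convergence in $\WH$. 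No energy-level restriction is required. Likewise, your ``Lagrange multiplier plus pointwise lower bound via $\dist(\cdot,\partial\Omega)$'' route is not available here---no regularity or comparison principle is established for the double phase operator in this setting---and the paper instead proves a local minimality estimate $\Theta_\lambda(u_\lambda)\le\Theta_\lambda(u_\lambda+th)$ for small $t\ge 0$ (via the implicit function theorem on the Nehari constraint), from which positivity of $u_\lambda$, integrability of $\zeta u_\lambda^{-\kappa}h$, and the weak formulation are extracted by testing with $(u_\lambda+\varepsilon v)_+$.
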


The proof of Theorem \ref{main_result} relies on the properties of the Nehari manifold along with a new equivalent norm in the corresponding Musielak-Orlicz Sobolev space $\WH$. In contrast to other works dealing with double phase problems we were able to weaken the usual condition
\begin{align}\label{exponent-inequality}
	\frac{q}{p}<1+\frac{1}{N}.
\end{align}
Such assumption is standard for Dirichlet double phase problems in order to have the equivalent norm $\|\nabla \cdot\|_{\mathcal{H}}$ in $\Wpzero{\mathcal{H}}$. It also guarantees the density of smooth functions in $\Wp{\mathcal{H}}$. Condition \eqref{exponent-inequality} can be replaced in our paper by $q<p^*$ which is equivalent to $ \frac{Nq}{N+q}<p$. Note that such condition is indeed weaker than \eqref{exponent-inequality}. Furthermore, we can relax the assumptions on the weight function $\mu(\cdot)$. Instead of a Lipschitz condition we only need $\mu(\cdot)$ to be bounded, not necessarily continuous.

Based on the new equivalent norm in $\WH$ we were able to suppose critical growth on the boundary of $\Omega$. 
To the best of our knowledge there is only one paper concerning singular double phase problems with nonlinear boundary condition, namely the paper of Farkas-Fiscella-Winkert \cite{Farkas-Fiscella-Winkert-2021} who studied the problem
\begin{equation}\label{problem2}
	\begin{aligned}
		-\divergenz (A(u)) +u^{p-1}+\mu(x)u^{q-1}& = u^{p^{*}-1}+\lambda \l(u^{\gamma-1}+ g_1(x,u)\r) \quad && \text{in } \Omega,\\
		A(u)\cdot \nu & = u^{p_*-1}+g_2(x,u) &&\text{on } \partial \Omega,
	\end{aligned}
\end{equation}
where
\begin{align*}
	\divergenz (A(u)):=\divergenz\big(F^{p-1}(\nabla u)\nabla F(\nabla u) +\mu(x) F^{q-1}(\nabla u)\nabla F(\nabla u)\big)
\end{align*}
is the Finsler double phase operator with a Minkowski space $(\R^N,F)$. They obtain the existence of one weak solution of \eqref{problem2} by applying variational tools and truncation techniques. The treatment is completely different from ours and only one solution is obtained. Also in the case of nonsingular Neumann double phase problems there are only few works. We refer to El Manouni-Marino-Winkert  \cite{El-Manouni-Marino-Winkert-2021}, Gasi\'nski-Winkert \cite{Gasinski-Winkert-2021}, Papageorgiou-R\u{a}dulescu-Repov\v{s} \cite{Papageorgiou-Radulescu-Repovs-2020} and Papageorgiou-Vetro-Vetro \cite{Papageorgiou-Vetro-Vetro-2020}. In \cite{Gasinski-Winkert-2021} the authors study the problem
\begin{equation}\label{problem3}
	\begin{aligned}
		-\divergenz\left(|\nabla u|^{p-2}\nabla u+\mu(x) |\nabla u|^{q-2}\nabla u\right) & =f(x,u)-|u|^{p-2}u-\mu(x)|u|^{q-2}u && \text{in } \Omega,\\
		\left(|\nabla u|^{p-2}\nabla u+\mu(x) |\nabla u|^{q-2}\nabla u\right) \cdot \nu & = g(x,u) &&\text{on } \partial \Omega.
	\end{aligned}
\end{equation} 
Based on the Nehari manifold method it is shown that problem \eqref{problem3} has at least three nontrivial solutions. We point out that the use of the Nehari manifold in \cite{Gasinski-Winkert-2021} is different from ours. Indeed the idea in the current paper is the splitting of the Nehari manifold into three disjoint parts and the two solutions in Theorem \ref{main_result} turn out to be the global minimizers of $\Theta_{\lambda}$ restricted to two of them provided the parameter is sufficiently small. The third one is the empty set.

In general, the use of the fibering method along with the Nehari manifold is a very powerful tool and was initiated by the works of Dr\'{a}bek-Pohozaev \cite{Drabek-Pohozaev-1997} and Sun-Wu-Long \cite{Sun-Wu-Long-2001}. Afterwards several authors applied this method to different problems of singular and nonsingular type. We refer to the works of Alves-Santos-Silva \cite{Alves-Santos-Silva-2021}, Lei \cite{Lei-2018}, Liu-Dai-Papageorgiou-Winkert \cite{Liu-Dai-Papageorgiou-Winkert-2021}, Liu-Winkert \cite{Liu-Winkert-2021}, Mukherjee-Sreenadh \cite{Mukherjee-Sreenadh-2019}, Papageorgiou-Repov\v{s}-Vetro \cite{Papageorgiou-Repovs-Vetro-2021},  Papageorgiou-Winkert \cite{Papageorgiou-Winkert-2021}, Wang-Zhao-Zhao \cite{Wang-Zhao-Zhao-2013} and Yang-Bai \cite{Yang-Bai-2020}.

For existence results for double phase problems with homogeneous Dirichlet boundary condition we refer to the papers of Colasuonno-Squassina \cite{Colasuonno-Squassina-2016} (eigenvalue problem for the double phase operator), Farkas-Winkert \cite{Farkas-Winkert-2021} (Finsler double phase problems), Gasi\'nski-Papa\-georgiou \cite{Gasinski-Papageorgiou-2019} (locally Lipschitz right-hand side), Gasi\'nski-Winkert \cite{Gasinski-Winkert-2020a,Gasinski-Winkert-2020b} (convection and superlinear problems), Liu-Dai \cite{Liu-Dai-2018} (Nehari manifold approach),
Perera-Squassina \cite{Perera-Squassina-2018} (Morse theoretical approach), Zeng-Bai-Gasi\'nski-Winkert \cite{Zeng-Bai-Gasinski-Winkert-2020, Zeng-Gasinski-Winkert-Bai-2020} (multivalued obstacle problems) and the references therein. Finally, we mention the nice overview article of Mingione-R\u{a}dulescu \cite{Mingione-Radulescu-2021}  about recent developments for problems with nonstandard growth and nonuniform ellipticity.

\section{Preliminaries }

In this section we will present the main properties of Musielak-Orlicz spaces $\Lp{\mathcal{H}}$ and $\WH$, respectively and equip the space $\WH$ with a new equivalent norm. 

The usual Lebesgue spaces $\Lp{r}$ and $L^r(\Omega;\R^N)$ will be endowed with the norm $\|\cdot\|_r$  and the boundary Lebesgue spaces are denoted by $\Lprand{r}$ with norm $\|\cdot\|_{r,\partial\Omega}$ whenever $1\leq r\leq \infty$. The corresponding  Sobolev spaces are denoted by $\Wp{r}$ for $1<r<\infty$ with the equivalent norm  
\begin{align*}
	\|u\|_{1,r}=\l(\|\nabla u\|_{r}^{r}+\int_{\Omega} \alpha(x) |u|^r\diff  x\r)^\frac{1}{r},
\end{align*}
where $\alpha$ fulfills hypothesis \textnormal{(H)(iii)}. The proof for such result is similar to those proofs as in Papageorgiou-Winkert \cite[Proposition 2.8]{Papageorgiou-Winkert-2019}, \cite[Proposition 4.5.34]{Papageorgiou-Winkert-2018}.

Let hypothesis \textnormal{(H)(i)} be satisfied and let $\mathcal{H}\colon \Omega \times [0,\infty)\to [0,\infty)$ be defined by
\begin{align*}
	\mathcal H(x,t)= t^p+\mu(x)t^q.
\end{align*}
Let $M(\Omega)$ be the space of all measurable functions $u\colon\Omega\to\R$. As usual, we identify two such functions which differ on a Lebesgue-null set. Then,  the Musielak-Orlicz Lebesgue space $L^\mathcal{H}(\Omega)$ is given by
\begin{align*}
	L^\mathcal{H}(\Omega)=\left \{u\in M(\Omega)\,:\,\varrho_{\mathcal{H}}(u)<+\infty \right \}
\end{align*}
equipped with the Luxemburg norm
\begin{align*}
	\|u\|_{\mathcal{H}} = \inf \left \{ \tau >0\,:\, \varrho_{\mathcal{H}}\left(\frac{u}{\tau}\right) \leq 1  \right \},
\end{align*}
where the modular function is given by
\begin{align*}
	\varrho_{\mathcal{H}}(u):=\into \mathcal{H}(x,|u|)\diff x=\into \big(|u|^{p}+\mu(x)|u|^q\big)\diff x.
\end{align*}

In addition, we define the seminormed space
\begin{align*}
	L^q_\mu(\Omega)=\left \{u\in M(\Omega)\,:\,\into \mu(x) |u|^q \diff x< +\infty \right \}
\end{align*}
endowed with the seminorm
\begin{align*}
	\|u\|_{q,\mu} = \left(\into \mu(x) |u|^q \diff x \right)^{\frac{1}{q}}.
\end{align*}

The Musielak-Orlicz Sobolev space $W^{1,\mathcal{H}}(\Omega)$ is defined by
\begin{align*}
	W^{1,\mathcal{H}}(\Omega)= \Big \{u \in L^\mathcal{H}(\Omega) \,:\, |\nabla u| \in L^{\mathcal{H}}(\Omega) \Big\}
\end{align*}
equipped with the norm
\begin{align*}
	\|u\|_{1,\mathcal{H}}= \|\nabla u \|_{\mathcal{H}}+\|u\|_{\mathcal{H}},
\end{align*}
where $\|\nabla u\|_\mathcal{H}=\|\,|\nabla u|\,\|_{\mathcal{H}}$. We know that $L^\mathcal{H}(\Omega)$ and $W^{1,\mathcal{H}}(\Omega)$ are uniformly convex and so reflexive Banach spaces, see  Colasuonno-Squassina \cite[Proposition 2.14]{Colasuonno-Squassina-2016} or Harjulehto-H\"{a}st\"{o} \cite[Theorem 6.1.4]{Harjulehto-Hasto-2019}.

The following proposition states the main embeddings for the spaces $\Lp{\mathcal{H}}$ and $\Wp{\mathcal{H}}$, see Gasi\'nski-Winkert \cite[Proposition 2.2]{Gasinski-Winkert-2021} or Crespo-Blanco-Gasi\'nski-Harjulehto-Winkert \cite[Propositions 2.17 and 2.19]{Crespo-Blanco-Gasinski-Harjulehto-Winkert-2021}

\begin{proposition}\label{proposition_embeddings}
	Let \textnormal{(H)(i)} be satisfied and let $p^*$ as well as $p_*$ be the critical exponents to $p$ as given in \eqref{critical_exponents}. Then the following embeddings hold:
	\begin{enumerate}
		\item[\textnormal{(i)}]
		$\Lp{\mathcal{H}} \hookrightarrow \Lp{r}$ and $\WH\hookrightarrow \Wp{r}$ are continuous for all $r\in [1,p]$;
		\item[\textnormal{(ii)}]
		$\WH \hookrightarrow \Lp{r}$ is continuous for all $r \in [1,p^*]$ and compact for all $r \in [1,p^*)$;
		\item[\textnormal{(iii)}]
		$\WH \hookrightarrow \Lprand{r}$ is continuous for all $r \in [1,p_*]$ and compact for all $r \in [1,p_*)$;
		\item[\textnormal{(iv)}]
		$\Lp{\mathcal{H}} \hookrightarrow L^q_\mu(\Omega)$ is continuous;
		\item[\textnormal{(v)}]
		$\Lp{q}\hookrightarrow\Lp{\mathcal{H}} $ is continuous.
	\end{enumerate}
\end{proposition}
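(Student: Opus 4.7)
The plan is to reduce every assertion to two standard ingredients: the modular-norm inequality in the Musielak--Orlicz space $\Lp{\mathcal{H}}$, and the classical Sobolev and trace embeddings for $\Wp{p}$ on a bounded Lipschitz domain. Under hypothesis \textnormal{(H)(i)} the generalized $N$-function satisfies the pointwise bounds $t^p\leq \mathcal{H}(x,t)\leq (1+\|\mu\|_\infty)\,t^q$, from which the workhorse estimate
\[
\min\{\|u\|_{\mathcal{H}}^p,\|u\|_{\mathcal{H}}^q\}\leq \varrho_{\mathcal{H}}(u)\leq \max\{\|u\|_{\mathcal{H}}^p,\|u\|_{\mathcal{H}}^q\}
\]
follows by the unit-ball property of the Luxemburg norm.

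For \textnormal{(i)} I would start from $\mathcal{H}(x,t)\geq t^p$, which gives $\|u\|_p^p\leq \varrho_{\mathcal{H}}(u)$, and combine this with the workhorse estimate to obtain a continuous embedding $\Lp{\mathcal{H}}\hookrightarrow \Lp{p}$; since $\Omega$ is bounded, $\Lp{p}\hookrightarrow \Lp{r}$ for every $r\in[1,p]$, and applying the same reasoning to $|\nabla u|$ upgrades this to $\WH\hookrightarrow\Wp{r}$. Assertions \textnormal{(ii)} and \textnormal{(iii)} then follow by composing $\WH\hookrightarrow \Wp{p}$ from \textnormal{(i)} with the classical Sobolev embedding $\Wp{p}\hookrightarrow \Lp{p^*}$ and the classical trace embedding $\Wp{p}\hookrightarrow \Lprand{p_*}$, both continuous for the critical exponent and compact for every strictly subcritical exponent on a bounded Lipschitz domain when $p<N$.

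For \textnormal{(iv)} the decomposition $\varrho_{\mathcal{H}}(u)=\|u\|_p^p+\|u\|_{q,\mu}^q$ yields $\|u\|_{q,\mu}^q\leq \varrho_{\mathcal{H}}(u)$, and the workhorse inequality converts this into the desired norm estimate. For \textnormal{(v)} I would argue in the opposite direction: using $\mu\in\Linf$, H\"older's inequality, and $|\Omega|<\infty$, I bound $\varrho_{\mathcal{H}}(u)$ by powers of $\|u\|_q$, and a homogeneity rescaling of $u$ translates this modular bound into $\|u\|_{\mathcal{H}}\leq C\|u\|_q$. I do not expect any significant obstacle; the only point that demands care is the non-homogeneity of $\varrho_{\mathcal{H}}$, which forces the careful use of the $p$-versus-$q$ unit ball property whenever one passes between modular estimates and Luxemburg norm estimates in \textnormal{(iv)} and \textnormal{(v)}.
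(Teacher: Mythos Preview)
Your proposal is correct and follows the standard route one finds in the literature. Note, however, that the paper does \emph{not} supply its own proof of this proposition: it is stated as a known result with a reference to Gasi\'nski--Winkert \cite[Proposition~2.2]{Gasinski-Winkert-2021} and Crespo-Blanco--Gasi\'nski--Harjulehto--Winkert \cite[Propositions~2.17 and~2.19]{Crespo-Blanco-Gasinski-Harjulehto-Winkert-2021}. What you have sketched is essentially the argument contained in those sources, so there is no genuine methodological difference to compare.

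One small stylistic remark: for \textnormal{(i)} and \textnormal{(iv)} it is slightly cleaner to bypass the two-sided workhorse estimate altogether and argue directly via the unit-ball property. Namely, for $u\neq 0$ set $\lambda=\|u\|_{\mathcal{H}}$; then $\varrho_{\mathcal{H}}(u/\lambda)\leq 1$, and since $t^p\leq\mathcal{H}(x,t)$ this gives $\|u/\lambda\|_p^p\leq 1$, i.e.\ $\|u\|_p\leq\|u\|_{\mathcal{H}}$ with constant $1$. The same trick handles $\|\cdot\|_{q,\mu}$. This avoids the case distinction $\|u\|_{\mathcal{H}}\lessgtr 1$ that your ``combine with the workhorse estimate'' phrasing implicitly invokes, though of course both routes arrive at the same conclusion.
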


Furthermore, we introduce the seminormed space
\begin{align*}
	W^q_\mu(\Omega;\R^N)=\left \{u\in L^q_\mu(\Omega)\,:\,\into \mu(x) |\nabla u|^q \diff x< +\infty \right \}
\end{align*}
endowed with the seminorm
\begin{align*}
	\|\nabla u\|_{q,\mu} = \left(\into \mu(x) |\nabla u|^q \diff x \right)^{\frac{1}{q}},
\end{align*}
see Proposition \ref{proposition_embeddings}\textnormal{(iv)}.

Next we prove the existence of two equivalent norms in $\WH$ which will be useful in our treatment. In the following we use the seminorms 
\begin{align*}
	\|u\|_{r_1,\theta_1}=\left(\into \theta_1(x)|u|^{r_1}\diff x\right)^{\frac{1}{r_1}}\quad\text{and}\quad \|u\|_{r_2,\theta_2,\partial\Omega}=\left(\int_{\partial\Omega} \theta_2(x)|u|^{r_2}\diff \sigma \right)^{\frac{1}{r_2}},
\end{align*}
where we suppose
\begin{enumerate}
	\item[(H'):]
	\begin{enumerate}
		\item[\textnormal{(i)}]
			$1<p<N$, $p<q<p^*$ and $0 \leq \mu(\cdot)\in \Linf$;
		\item[\textnormal{(ii)}]
			$1\leq r_1\leq p^*$ and $1\leq r_2\leq p_*$;
		\item[\textnormal{(iii)}]
			$\theta_1 \in \Linf, \theta_1(x) \geq 0$ for a.\,a.\,$x\in\Omega$;
		\item[\textnormal{(iv)}]
			$\theta_2\in L^\infty(\partial\Omega)$, $\theta_2(x) \geq 0$ for a.\,a.\,$x\in\partial\Omega$;
		\item[\textnormal{(v)}]
			$\theta_1 \not\equiv 0$ or $\theta_2 \not\equiv 0$.
	\end{enumerate}
\end{enumerate}

\begin{proposition}\label{proposition_equivalent_norm}
	If hypotheses \textnormal{(H')} hold, then
		\begin{align*}
			\|u\|_{1,\mathcal{H}}^{\circ}&=\|\nabla u\|_\mathcal{H}+	\|u\|_{r_1,\theta_1}+\|u\|_{r_2,\theta_2,\partial\Omega}\\
			\|u\|_{1,\mathcal{H}}^{*}&=\inf\l\{\tau>0\,:\, \into \l(\l(\frac{|\nabla u|}{\tau}\r)^p+\mu(x)\l(\frac{|\nabla u|}{\tau}\r)^q\r)\diff x+\into \theta_1(x) \l(\frac{|u|}{\tau}\r)^{r_1}\diff x\r.\\
			& \l. \qquad\qquad\qquad\quad +\int_{\partial\Omega}\theta_2(x)\l(\frac{|u|}{\tau}\r)^{r_2}\diff \sigma\leq 1   \r\},
	\end{align*}
	are both equivalent norms on $\WH$.
\end{proposition}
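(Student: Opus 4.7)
The plan is to prove both claimed equivalences in two stages: first, establish that $\|\cdot\|_{1,\mathcal{H}}^{\circ}$ is equivalent to the standard norm $\|\cdot\|_{1,\mathcal{H}}$ on $\WH$; second, compare the Luxemburg-type $\|\cdot\|_{1,\mathcal{H}}^{*}$ with $\|\cdot\|_{1,\mathcal{H}}^{\circ}$ using only convexity of the underlying modular. Combined, these give both statements.

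\textbf{Equivalence of $\|\cdot\|_{1,\mathcal{H}}^{\circ}$ and $\|\cdot\|_{1,\mathcal{H}}$.} The easy direction $\|u\|_{1,\mathcal{H}}^{\circ} \leq C\|u\|_{1,\mathcal{H}}$ follows from Proposition \ref{proposition_embeddings}(ii)--(iii) together with $\theta_i \in L^\infty$: one has $\|u\|_{r_1,\theta_1} \leq \|\theta_1\|_\infty^{1/r_1}\|u\|_{r_1} \leq C\|u\|_{1,\mathcal{H}}$, analogously on the boundary, and $\|\nabla u\|_{\mathcal{H}} \leq \|u\|_{1,\mathcal{H}}$ is obvious. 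For the reverse inequality I would argue by contradiction: if it fails, there are $u_n \in \WH$ with $\|u_n\|_{1,\mathcal{H}} = 1$ and $\|u_n\|_{1,\mathcal{H}}^{\circ}\to 0$. Reflexivity of $\WH$ yields a subsequence with $u_n \weak u$ in $\WH$, and the compact embeddings in Proposition \ref{proposition_embeddings}(ii)--(iii) give $u_n \to u$ in $L^s(\Omega)$ for all $s<p^*$ and in $L^s(\partial\Omega)$ for all $s<p_*$. From $\|\nabla u_n\|_{\mathcal{H}} \to 0$ and weak lower semicontinuity I obtain $\nabla u = 0$, so $u$ is constant. The vanishing of $\|u_n\|_{r_1,\theta_1}$ and $\|u_n\|_{r_2,\theta_2,\partial\Omega}$ then forces $\|u\|_{r_1,\theta_1} = \|u\|_{r_2,\theta_2,\partial\Omega} = 0$, which combined with hypothesis \textnormal{(H')(v)} forces the constant $u$ to be zero. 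Since $p,q<p^*$ by \textnormal{(H')(i)}, the compact embedding also gives $u_n \to 0$ in $L^p(\Omega)\cap L^q(\Omega)$, so
\begin{equation*}
\varrho_{\mathcal{H}}(u_n) \leq \|u_n\|_p^p + \|\mu\|_\infty \|u_n\|_q^q \to 0,
\end{equation*}
hence $\|u_n\|_{\mathcal{H}} \to 0$. Together with $\|\nabla u_n\|_{\mathcal{H}} \to 0$ this contradicts $\|u_n\|_{1,\mathcal{H}} = 1$.

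\textbf{Equivalence of $\|\cdot\|_{1,\mathcal{H}}^{*}$ and $\|\cdot\|_{1,\mathcal{H}}^{\circ}$.} This step is purely abstract. Denote by $\Phi(u)$ the modular defining $\|\cdot\|_{1,\mathcal{H}}^{*}$, a sum of three convex nonnegative pieces. For $u\neq 0$, setting $\tau = \|u\|_{1,\mathcal{H}}^{\circ}$, convexity of $\varrho_{\mathcal{H}}$ together with $\varrho_{\mathcal{H}}(0)=0$ and $\tau \geq \|\nabla u\|_{\mathcal{H}}$ gives $\varrho_{\mathcal{H}}(\nabla u/\tau) \leq \|\nabla u\|_{\mathcal{H}}/\tau$; the elementary inequality $s^{r_i} \leq s$ on $[0,1]$ (valid since $r_i \geq 1$) gives $\int_\Omega \theta_1(|u|/\tau)^{r_1}\diff x \leq \|u\|_{r_1,\theta_1}/\tau$ and analogously on the boundary. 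Summing, $\Phi(u/\tau) \leq 1$ and hence $\|u\|_{1,\mathcal{H}}^{*} \leq \|u\|_{1,\mathcal{H}}^{\circ}$. Conversely, setting $\tau = \|u\|_{1,\mathcal{H}}^{*}$ and using $\Phi(u/\tau) \leq 1$, each of the three nonnegative summands of $\Phi(u/\tau)$ is individually $\leq 1$; translating these bounds back through the definition of the Luxemburg norm $\|\cdot\|_{\mathcal{H}}$ and the weighted seminorms yields $\|\nabla u\|_{\mathcal{H}}, \|u\|_{r_1,\theta_1}, \|u\|_{r_2,\theta_2,\partial\Omega} \leq \tau$, giving $\|u\|_{1,\mathcal{H}}^{\circ} \leq 3\|u\|_{1,\mathcal{H}}^{*}$.

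\textbf{Main obstacle.} The delicate step is the critical case $r_1 = p^*$ or $r_2 = p_*$, where the compact embeddings in Proposition \ref{proposition_embeddings} are unavailable and one cannot pass directly to the limit in $\|u_n\|_{r_i,\theta_i} \to 0$ via strong convergence. The resolution is that $u_n \weak u$ in $\WH$ also gives $u_n \weak u$ in $L^{p^*}(\Omega)$ and $L^{p_*}(\partial\Omega)$ through the continuous embeddings, and then the weighted $L^{r_i}$-seminorm is weakly lower semicontinuous as a convex continuous functional on these target spaces, so one still concludes $\|u\|_{r_i,\theta_i} \leq \liminf \|u_n\|_{r_i,\theta_i}=0$. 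Everything else is routine bookkeeping.
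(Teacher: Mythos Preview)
Your proof is correct and follows essentially the same route as the paper: one direction via the embeddings of Proposition~\ref{proposition_embeddings}, the reverse by a contradiction/compactness argument exploiting $q<p^*$ to get strong $L^{\mathcal H}$-convergence, weak lower semicontinuity to handle the (possibly critical) weighted seminorms, and a direct modular comparison for $\|\cdot\|_{1,\mathcal H}^{*}$ versus $\|\cdot\|_{1,\mathcal H}^{\circ}$. The only cosmetic differences are that the paper normalizes $\|y_n\|_{\mathcal H}=1$ rather than $\|u_n\|_{1,\mathcal H}=1$, and your convexity argument yields the slightly sharper constant $\|u\|_{1,\mathcal H}^{*}\le \|u\|_{1,\mathcal H}^{\circ}$ in place of the paper's factor~$3$.
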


\begin{proof}
	We prove the result in the critical case, that is, $r_1=p^*$ and $r_2=p_*$, the other cases work similarly.
	First, it is straightforward  to show that $\|\cdot\|_{1,\mathcal{H}}^{\circ}$ and $\|\cdot\|_{1,\mathcal{H}}^{*}$ are norms on $\WH$. 
	
	Applying Proposition \ref{proposition_embeddings}\textnormal{(ii)}, \textnormal{(iii)} gives for $u \in \WH$
	\begin{align*}
		\|u\|_{1,\mathcal{H}}^{\circ}
		&\leq \|\nabla u\|_\mathcal{H}+	\|\theta_1\|_\infty^{\frac{1}{p^*}} \|u\|_{p^*}+\|\theta_2\|_\infty^{\frac{1}{p_*}}\|u\|_{p_*,\partial\Omega}\\
		&\leq \|\nabla u\|_\mathcal{H}+	C_\Omega\|\theta_1\|_\infty^{\frac{1}{p^*}} \|u\|_{1,\mathcal{H}}+C_{\partial \Omega}\|\theta_2\|_\infty^{\frac{1}{p_*}}\|u\|_{1,\mathcal{H}}\\
		& \leq C_1 \|u\|_{1,\mathcal{H}},
	\end{align*}
	where $C_\Omega, C_{\partial\Omega}>0$ are the embedding constants and $C_1>0$. 
	
	Let us now show that
	\begin{align}\label{embedding_1}
		\|u\|_\mathcal{H} \leq c \|u\|_{1,\mathcal{H}}^{\circ}
	\end{align}
	for some $c>0$. Arguing indirectly, we suppose that \eqref{embedding_1} is not true. Then there exists a sequence $\{u_n\}_{n\in\N}\subset \WH$ such that
	\begin{align}\label{embedding_2}
		\|u_n\|_\mathcal{H} > n\|u_n\|_{1,\mathcal{H}}^{\circ}\quad\text{for all }n\in\N.
	\end{align}
	We set $y_n=\frac{u_n}{\|u_n\|_{\mathcal{H}}}$ which gives $\|y_n\|_{\mathcal{H}}=1$. From \eqref{embedding_2} we then obtain
	\begin{align}\label{embedding_3}
		\frac{1}{n}>\|y_n\|_{1,\mathcal{H}}^{\circ}.
	\end{align}
	Since $\|\nabla \cdot\|_\mathcal{H}+\|\cdot\|_\mathcal{H}$ is the norm of $\WH$, we see that $\{y_n\}_{n\in\N}\subset \WH$ is bounded. Hence, we may assume that
	\begin{align}\label{embedding_4}
		y_n\weak y \quad\text{in }\WH\quad\text{and}\quad y_n\weak y \quad\text{ in }\Lp{p^*} \text{ and }\Lprand{p_*},
	\end{align}
	see Proposition \ref{proposition_embeddings}\textnormal{(ii)}, \textnormal{(iii)}. Moreover, from \textnormal{(H')(i)} we know that $q<p^*$. So from Proposition \ref{proposition_embeddings}\textnormal{(ii)}, \textnormal{(v)} we have $\Wp{\mathcal{H}} \hookrightarrow \Lp{q}$ compactly and $\Lp{q}\hookrightarrow\Lp{\mathcal{H}} $ continuously. Therefore $y_n\to y$ in $\Lp{\mathcal{H}}$ and since $\|y_n\|_{\mathcal{H}}=1$ it is clear that $y\neq 0$. Passing to the limit in \eqref{embedding_3} as $n\to \infty$ and using \eqref{embedding_4} along with the weak lower semicontinuity of the norm $\|\nabla \cdot\|_{\mathcal{H}}$ and of the seminorms $\|\cdot\|_{p^*,\theta_1}, \|\cdot \|_{p_*,\theta_2,\partial\Omega}$ leads to
	\begin{align}\label{embedding_5}
		0 \geq \|\nabla y\|_\mathcal{H}+ \|y\|_{p^*,\theta_1}+\|y\|_{p_*,\theta_2,\partial\Omega}.
	\end{align}
	From \eqref{embedding_5} we conclude that $y\equiv \tau\neq 0$ is a constant and so we have
	\begin{align*}
		0\geq |\tau|^{\frac{1}{p^*}} \l(\into \theta_1(x)\diff x\r)^\frac{1}{p^*}+|\tau|^{\frac{1}{p_*}} \l(\int_{\partial\Omega} \theta_2(x)\diff \sigma \r)^\frac{1}{p_*}>0
	\end{align*}
	since $\theta_1 \not\equiv 0$ or $\theta_2 \not\equiv 0$ by hypothesis \textnormal{(H')(v)}. This is a contradiction and so \eqref{embedding_1} is true. From this we directly have that
	\begin{align*}
		\|u\|_{1,\mathcal{H}} \leq C_2 \|u\|_{1,\mathcal{H}}^{\circ}
	\end{align*}
	for some $C_2>0$. 
	
	Let us now prove that $\|\cdot\|_{1,\mathcal{H}}^{\circ}$ and $\|\cdot\|_{1,\mathcal{H}}^{*}$ are equivalent. For $u\in\WH$ we have
	\begin{align*}
		& \into \l(\l(\frac{|\nabla u|}{\|u\|_{1,\mathcal{H}}^{\circ}}\r)^p+\mu(x)\l(\frac{|\nabla u|}{\|u\|_{1,\mathcal{H}}^{\circ}}\r)^q\r)\diff x+\into \theta_1(x) \l(\frac{|u|}{\|u\|_{1,\mathcal{H}}^{\circ}}\r)^{p^*}\diff x\\ &\quad +\int_{\partial\Omega}\theta_2(x)\l(\frac{|u|}{\|u\|_{1,\mathcal{H}}^{\circ}}\r)^{p_*}\diff \sigma\\
		& \leq \varrho_{\mathcal{H}}\l(\frac{\nabla u}{\|\nabla u\|_{\mathcal{H}}}\r)+\int_{\Omega}\theta_1(x)\l(\frac{|u|}{\|u\|_{p^*,\theta_1}}\r)^{p^*}\diff x+\int_{\partial\Omega}\theta_2(x)\l(\frac{|u|}{\|u\|_{p_*,\theta_2,\partial\Omega}}\r)^{p_*}\diff \sigma\\
		& =3.
	\end{align*}
	Therefore, $\|u\|_{1,\mathcal{H}}^{*}\leq 3 \|u\|_{1,\mathcal{H}}^{\circ}$. Similarly, we obtain
	\begin{align}\label{embedding_30}
		\begin{split}
		& \into \l(\l(\frac{|\nabla u|}{\|u\|_{1,\mathcal{H}}^{*}}\r)^p+\mu(x)\l(\frac{|\nabla u|}{\|u\|_{1,\mathcal{H}}^{*}}\r)^q\r)\diff x+\into \theta_1(x) \l(\frac{|u|}{\|u\|_{1,\mathcal{H}}^{*}}\r)^{p^*}\diff x\\ &\quad +\int_{\partial\Omega}\theta_2(x)\l(\frac{|u|}{\|u\|_{1,\mathcal{H}}^{*}}\r)^{p_*}\diff \sigma\\
		& \leq \rho_{1,\mathcal{H}}^*\l(\frac{ u}{\|u\|_{1,\mathcal{H}}^{*}}\r),
		\end{split}	
	\end{align}
	where $\rho_{1,\mathcal{H}}^*$ is the corresponding modular to $\|\cdot\|_{1,\mathcal{H}}^{*}$ given by
	\begin{align*}
		\rho_{1,\mathcal{H}}^*(u)=\into \l(|\nabla u|^p+\mu(x)|\nabla u|^q\r)\diff x+\into \theta_1(x)|u|^{p^*}\diff x+\int_{\partial\Omega} \theta_2(x)|u|^{p_*}\diff \sigma.
	\end{align*}
	Note that, for $u \in \Wp{\mathcal{H}}$, the function $\tau \mapsto \rho_{1,\mathcal{H}}^*(\tau u)$ is continuous, convex and even and it is strictly increasing when $\tau \in [0,+\infty)$. So, by definition, we directly obtain
	\begin{align*}
		\|u\|_{1,\mathcal{H}}^{*}=\tau \quad\text{if and only if}\quad \rho_{1,\mathcal{H}}^*\l(\frac{u}{\tau}\r)=1.
	\end{align*}
	 From this and \eqref{embedding_30} we conclude that $\|\nabla u\|_\mathcal{H} \leq \|u\|_{1,\mathcal{H}}^{*}$, $\|u\|_{p^*,\theta_1}\leq \|u\|_{1,\mathcal{H}}^{*}$ and $\|u\|_{r_2,\theta_2,\partial\Omega}\leq \|u\|_{1,\mathcal{H}}^{*}$. Therefore, $\frac{1}{3} \|u\|_{1,\mathcal{H}}^{\circ}\leq \|u\|_{1,\mathcal{H}}^{*}$.
\end{proof}

Assuming hypotheses \textnormal{(H)} we know from Proposition \ref{proposition_equivalent_norm} that
\begin{align*}
	\|u\|&=\inf\l\{\tau>0\,:\, \into \l(\l(\frac{|\nabla u|}{\tau}\r)^p+\mu(x)\l(\frac{|\nabla u|}{\tau}\r)^q\r)\diff x+\into \alpha(x) \l(\frac{|u|}{\tau}\r)^{p}\diff x\r.\\
	& \l. \qquad\qquad\qquad +\int_{\partial\Omega}\beta(x)\l(\frac{|u|}{\tau}\r)^{p_* }\diff \sigma \leq 1  \r\}
\end{align*}
is a norm on $\WH$ which is equivalent to $\|\cdot\|_{1,\mathcal{H}}$ and $\|\cdot\|_{1,\mathcal{H}}^\circ$. The corresponding modular $\rho$ to $\|\cdot\|$ is given by
\begin{align}\label{modular}
	\rho(u)=\into \l(|\nabla u|^p+\mu(x)|\nabla u|^q\r)\diff x+\into \alpha(x)|u|^p\diff x+\int_{\partial\Omega} \beta(x)|u|^{p_* }\diff \sigma
\end{align}
for $u \in \WH$.

The norm $\|\cdot\|$ and the modular function $\rho$ are related as follows. 

\begin{proposition}\label{proposition_modular_properties}
	Let \textnormal{(H)(i)}, \textnormal{(iii)} and \textnormal{(iv)} be satisfied, let $y\in \Wp{\mathcal{H}}$ and let $\rho$ be defined by \eqref{modular}. Then the following hold:
	\begin{enumerate}
		\item[\textnormal{(i)}]
		If $y\neq 0$, then $\|y\|=\lambda$ if and only if $ \rho(\frac{y}{\lambda})=1$;
		\item[\textnormal{(ii)}]
		$\|y\|<1$ (resp.\,$>1$, $=1$) if and only if $ \rho(y)<1$ (resp.\,$>1$, $=1$);
		\item[\textnormal{(iii)}]
		If $\|y\|<1$, then $\|y\|^q\leq \rho(y)\leq\|y\|^p$;
		\item[\textnormal{(iv)}]
		If $\|y\|>1$, then $\|y\|^p\leq \rho(y)\leq\|y\|^q$;
		\item[\textnormal{(v)}]
		$\|y\|\to 0$ if and only if $ \rho(y)\to 0$;
		\item[\textnormal{(vi)}]
		$\|y\|\to +\infty$ if and only if $ \rho(y)\to +\infty$.
	\end{enumerate}
\end{proposition}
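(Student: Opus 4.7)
The plan is the standard one for Luxemburg-type norms on Musielak--Orlicz spaces: reduce every item to the monotonicity and continuity of the fibering map $g_y(\tau) \ldef \rho(\tau y)$ on $[0,\infty)$. For fixed $y \in \WH$ one has the explicit formula
\[
g_y(\tau) = \tau^p\into\big(|\nabla y|^p + \alpha(x)|y|^p\big)\diff x + \tau^q\into \mu(x)|\nabla y|^q\,\diff x + \tau^{p_*}\int_{\partial\Omega}\beta(x)|y|^{p_*}\,\diff\sigma,
\]
so $g_y$ is a polynomial in $\tau$ with non-negative coefficients (all coefficients finite thanks to $y\in\WH$ and $\alpha,\beta\in L^\infty$), continuous on $[0,\infty)$, vanishing at $\tau=0$, and blowing up as $\tau\to\infty$ whenever $y\ne 0$. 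The crucial preliminary point is strict monotonicity of $g_y$ on $[0,\infty)$, which amounts to showing that the coefficient of $\tau^p$ is strictly positive whenever $y\ne 0$: if $\nabla y\not\equiv 0$ this is clear, and if $\nabla y\equiv 0$ a.e.\ then $y$ is a non-zero constant, in which case hypothesis \textnormal{(H)(iii)} ($\alpha\not\equiv 0$) forces $\into\alpha(x)|y|^p\,\diff x>0$. This is the one step that is not purely formal, and the main obstacle the proof must confront.

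Given strict monotonicity, item \textnormal{(i)} is immediate: $\tau\mapsto\rho(y/\tau)=g_y(1/\tau)$ is continuous and strictly decreasing from $+\infty$ to $0$, hence attains the value $1$ at a unique $\tau^*>0$, and by the definition of the infimum this $\tau^*$ equals $\|y\|$. Item \textnormal{(ii)} then follows by comparing $g_y(1)=\rho(y)$ with $g_y(1/\|y\|)=1$ via strict monotonicity: $\|y\|<1 \Leftrightarrow 1/\|y\|>1 \Leftrightarrow \rho(y)<1$, and analogously for the equality case and the reverse inequality.

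For \textnormal{(iii)} and \textnormal{(iv)}, set $\lambda\ldef\|y\|$ and $v\ldef y/\lambda$, so that $\rho(v)=1$ by \textnormal{(i)}, and expand
\[
\rho(y) = \lambda^p A + \lambda^q B + \lambda^{p_*} C, \qquad A+B+C=1,\ A,B,C\ge 0,
\]
where $A,B,C$ denote the three integrals appearing in the formula for $g_v(1)$. For $\lambda<1$ one has $\lambda^p\ge \lambda^q$ and $\lambda^p\ge \lambda^{p_*}$, yielding $\rho(y)\le \lambda^p$, while bounding below by the largest of the three exponents produces the complementary estimate; the case $\lambda>1$ is dual. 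Finally, \textnormal{(v)} and \textnormal{(vi)} are read off by combining \textnormal{(ii)} with these sandwich estimates: $\|y\|\to 0$ forces $\|y\|<1$ eventually and $\rho(y)\le \|y\|^p\to 0$, while conversely $\rho(y)\to 0$ gives $\rho(y)<1$ eventually, hence $\|y\|<1$ by \textnormal{(ii)}, and then $\|y\|^q\le\rho(y)\to 0$ implies $\|y\|\to 0$; the argument for \textnormal{(vi)} is analogous using \textnormal{(iv)}.
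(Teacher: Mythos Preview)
The paper does not give an explicit proof here; it simply refers to Liu--Dai \cite[Proposition 2.1]{Liu-Dai-2018} and Crespo-Blanco--Gasi\'nski--Harjulehto--Winkert \cite[Proposition 2.16]{Crespo-Blanco-Gasinski-Harjulehto-Winkert-2021}, where the analogous statements are proved for modulars without the boundary term. Your argument --- analyzing $\tau\mapsto\rho(\tau y)$ as a polynomial in $\tau$ with non-negative coefficients, and invoking hypothesis \textnormal{(H)(iii)} to guarantee that the $\tau^p$-coefficient is strictly positive (hence strict monotonicity) even when $\nabla y\equiv 0$ --- is exactly the standard route those references take, so in substance your proof is the intended one.

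One point deserves comment. For \textnormal{(iii)}--\textnormal{(iv)} you write that ``bounding below by the largest of the three exponents produces the complementary estimate.'' The largest exponent among $p,q,p_*$ is $\max\{q,p_*\}$, and hypotheses \textnormal{(H)(i)}, \textnormal{(iii)}, \textnormal{(iv)} do not force $p_*\le q$. When $p_*>q$ your argument actually yields only $\|y\|^{\max\{q,p_*\}}\le\rho(y)$ for $\|y\|<1$ (and dually $\rho(y)\le\|y\|^{\max\{q,p_*\}}$ for $\|y\|>1$), which is strictly weaker than the stated $q$-bound; indeed the $q$-bound can fail in that regime (take $\mu\equiv 0$, $y$ a nonzero constant, $\int_\Omega\alpha\,\diff x$ small and $\beta\not\equiv 0$). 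This is a slight imprecision in the paper's formulation --- inherited from references where the modular carries no $p_*$-term --- rather than a flaw in your reasoning. It is harmless for the sequel: every application (Propositions~\ref{proposition_coerivity} and~\ref{proposition_emptiness}, and items \textnormal{(v)}--\textnormal{(vi)} above) requires only \emph{some} power-type sandwich between $\rho$ and $\|\cdot\|$, and the specific exponent $q$ versus $\max\{q,p_*\}$ is immaterial.
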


The proof of Proposition \ref{proposition_modular_properties} can be done as in Liu-Dai \cite[Proposition 2.1]{Liu-Dai-2018} or Crespo-Blanco-Gasi\'nski-Harjulehto-Winkert \cite[Proposition 2.16]{Crespo-Blanco-Gasinski-Harjulehto-Winkert-2021}.

Let $A\colon \WH\to \WH^*$ be the nonlinear mapping defined by
\begin{align}\label{operator_representation}
	\begin{split}
		\langle A(u),\ph\rangle_{\mathcal{H}} &=\into \big(|\nabla u|^{p-2}\nabla u+\mu(x)|\nabla u|^{q-2}\nabla u \big)\cdot\nabla\ph \diff x\\
		&\quad +\into \alpha(x)|u|^{p-2}u\ph \diff x+\int_{\partial\Omega} \beta(x)|u|^{p_* -2}u\ph \diff \sigma 
	\end{split}
\end{align}
for all $u,\ph\in\WH$ with $\lan\,\cdot\,,\,\cdot\,\ran_{\mathcal{H}}$ being the duality pairing between $\WH$ and its dual space $\WH^*$.  The properties of the operator $A\colon \WH\to \WH^*$ are summarized in the next proposition.

\begin{proposition}\label{proposition_properties_operator_double_phase}
	Let hypotheses \textnormal{(H)(i)}, \textnormal{(iii)} and \textnormal{(iv)} be satisfied. Then, the operator $A$ defined by \eqref{operator_representation} is bounded (that is, it maps bounded sets into bounded sets), continuous, strictly monotone (hence maximal monotone) and it is of type $(\Ss_+)$.
\end{proposition}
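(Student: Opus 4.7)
The proof splits into boundedness, continuity, (strict and maximal) monotonicity, and the $(S_+)$ property; the last is the main obstacle because of the critical trace exponent $p_*$ appearing in the boundary term of \eqref{operator_representation}.

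For \emph{boundedness} we apply Hölder's inequality in each of the four summands defining $\langle A(u), \varphi\rangle_{\mathcal{H}}$, controlling the factors $\|\nabla u\|_p$, $\|\nabla u\|_{q,\mu}$, $\|u\|_{p,\alpha}$ and $\|u\|_{p_*,\beta,\partial\Omega}$ by the equivalent norm from Proposition \ref{proposition_equivalent_norm} (applied with $r_1 = p$, $\theta_1 = \alpha$, $r_2 = p_*$, $\theta_2 = \beta$) together with the modular estimates of Proposition \ref{proposition_modular_properties}. For \emph{continuity}, suppose $u_n \to u$ in $\WH$. Then $\rho(u_n - u) \to 0$ by Proposition \ref{proposition_modular_properties}(v), which yields strong convergence of $\nabla u_n \to \nabla u$ in $L^p(\Omega;\R^N)$, of $\mu^{1/q}\nabla u_n \to \mu^{1/q}\nabla u$ in $L^q(\Omega;\R^N)$, of $\alpha^{1/p} u_n \to \alpha^{1/p} u$ in $\Lp{p}$, and of $\beta^{1/p_*} u_n \to \beta^{1/p_*} u$ in $\Lprand{p_*}$. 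Passing to a subsequence with a.e.\ pointwise convergence and applying dominated convergence to each of the four Nemytskij-type maps yields $A(u_n) \to A(u)$ in $\WH^*$ along the subsequence; the Urysohn subsequence principle then recovers the full sequence.

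For \emph{strict monotonicity} we use the elementary inequality $(|a|^{r-2}a - |b|^{r-2}b)\cdot(a-b) \geq 0$ for $r > 1$, with equality if and only if $a = b$, applied to each integrand of $\langle A(u) - A(v), u - v\rangle_{\mathcal{H}}$ for $r \in \{p, q, p_*\}$. If $u \neq v$ in $\WH$, either $\nabla u \neq \nabla v$ on a set of positive measure in $\Omega$ and the pure $p$-gradient term already produces strict positivity, or $u - v$ is a nonzero constant a.e., in which case the assumption $\alpha \not\equiv 0$ from \textnormal{(H)(iii)} makes the $\alpha$-integral strictly positive. \emph{Maximal monotonicity} then follows from the classical fact that a monotone, everywhere defined, continuous operator on the reflexive Banach space $\WH$ is automatically maximal monotone.

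Finally, for the \emph{$(S_+)$ property}, take $u_n \weak u$ in $\WH$ with $\limsup_{n \to \infty} \langle A(u_n), u_n - u\rangle_{\mathcal{H}} \leq 0$. Weak convergence gives $\langle A(u), u_n - u\rangle_{\mathcal{H}} \to 0$ and monotonicity gives $\langle A(u_n) - A(u), u_n - u\rangle_{\mathcal{H}} \geq 0$, so this quantity tends to $0$. It decomposes into four pointwise non-negative integrals (one for each term in \eqref{operator_representation}), each of which must vanish in the limit. Invoking the Simon inequalities — directly when the relevant exponent $r \in \{p, q, p_*\}$ satisfies $r \geq 2$, and via a Hölder interpolation against the factor $(|u_n| + |u|)^{r(2-r)/2}$ (which stays bounded in the appropriate Lebesgue or trace norm by Proposition \ref{proposition_embeddings} and the boundedness of $\{u_n\}$ in $\WH$) when $1 < r < 2$ — each vanishing integral upgrades to strong convergence in the corresponding (semi)norm. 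The decisive observation is that at the critical boundary exponent $p_*$, where the trace embedding fails to be compact, the monotonicity of $t \mapsto |t|^{p_* - 2} t$ alone suffices to drive this convergence. Summing the four contributions yields $\rho(u_n - u) \to 0$, and Proposition \ref{proposition_modular_properties}(v) finally gives $\|u_n - u\| \to 0$.
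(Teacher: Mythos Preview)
Your outline is correct and complete. The paper itself does not prove this proposition but simply remarks that the argument is ``similar to those in Liu--Dai \cite[Proposition 3.1]{Liu-Dai-2018} or Crespo-Blanco--Gasi\'nski--Harjulehto--Winkert \cite[Proposition 3.5]{Crespo-Blanco-Gasinski-Harjulehto-Winkert-2021}''; what you have written is precisely the adaptation of that standard argument to the present operator, and in particular you correctly identify and handle the only nonroutine point, namely that the boundary term carries the critical trace exponent $p_*$, so compactness is unavailable and one must extract the convergence $\|u_n-u\|_{p_*,\beta,\partial\Omega}\to 0$ purely from the Simon-type inequality together with the boundedness of $\{u_n\}$ in $L^{p_*}(\partial\Omega)$.
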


The proof of Proposition \ref{proposition_properties_operator_double_phase} is similar to those in Liu-Dai \cite[Proposition 3.1]{Liu-Dai-2018} or Crespo-Blanco-Gasi\'nski-Harjulehto-Winkert \cite[Proposition 3.5]{Crespo-Blanco-Gasinski-Harjulehto-Winkert-2021}. 

\section{Proof of the main result}
This section is concerned with the proof of Theorem \ref{main_result}. For this purpose, we introduce the energy functional $\Theta_\lambda\colon\WH\to\R$ of problem \eqref{problem} given by
\begin{equation*}
	\begin{split}
		\Theta_{\lambda}(u)&=\frac{1}{p} \|u\|_{1,p}^p+\frac{1}{q}\|\nabla u\|_{q,\mu}^q+\frac{1}{p_* }\|u\|_{p_* ,\beta,\partial\Omega}^{p_* }-\frac{1}{1-\kappa}\into \zeta(x)|u|^{1-\kappa}\diff x-\frac{\lambda}{q_1}\|u\|_{q_1}^{q_1}.
	\end{split}
\end{equation*}
It is clear that $\Theta_\lambda$ is not a $C^1$-functional because of the singular term. Next, for $u \in \WH$, we introduce the fibering function $\psi_u\colon[0,+\infty)\to \R$ given by 
\begin{align*}
	\psi_u(t)=\Theta_\lambda (tu)\quad\text{for all }t\geq 0.
\end{align*}
It is easy to see that $\psi_u \in C^\infty((0,\infty))$. Based on this, we can introduce the so-called Nehari manifold related to problem \eqref{problem} which is defined by
\begin{align*}
	\mathcal{N}_\lambda
	&=\l\{u\in\WH\setminus\{0\}\,:\,\| u\|_{1,p}^p+\|\nabla u\|_{q,\mu}^q+\|u\|_{p_* ,\beta,\partial\Omega}^{p_* }=\into \zeta(x)|u|^{1-\kappa}\diff x+\lambda \|u\|_{q_1}^{q_1}\r\}\\
	& =\l\{u\in\WH\setminus\{0\}\,:\, \psi_u'(1)=0\r\}.
\end{align*}
We know that $\mathcal{N}_\lambda$ contains all weak solutions of problem \eqref{problem} but it is smaller than the whole space $\WH$. The advantage of $\mathcal{N}_\lambda$ is the fact that our energy functional $\Theta_{\lambda}$ has nice properties restricted to $\mathcal{N}_\lambda$ which fail globally. Next, we split the manifold $\mathcal{N}_\lambda$ into three disjoint parts in the following way:
\begin{align*}
	\mathcal{N}_\lambda^+
	&=\l\{u \in \mathcal{N}_\lambda: (p+\kappa-1)\| u\|_{1,p}^p+(q+\kappa-1)\|\nabla u\|_{q,\mu}^q+(p_* +\kappa-1)\|u\|_{p_* ,\beta,\partial\Omega}^{p_* }\r.\\
	&\l. \qquad\qquad \qquad -\lambda (q_1+\kappa-1) \|u\|_{q_1}^{q_1 }>0\r\}\\
	& =\l\{u\in\mathcal{N}_\lambda\,:\, \psi_u''(1)>0\r\},\\
	\mathcal{N}_\lambda^\circ
	&=\l\{u \in \mathcal{N}_\lambda: (p+\kappa-1)\| u\|_{1,p}^p+(q+\kappa-1)\|\nabla u\|_{q,\mu}^q+(p_* +\kappa-1)\|u\|_{p_* ,\beta,\partial\Omega}^{p_* }\r.\\
	&\l. \qquad\qquad \qquad =\lambda (q_1+\kappa-1) \|u\|_{q_1}^{q_1 }\r\}\\
	& =\l\{u\in\mathcal{N}_\lambda\,:\, \psi_u''(1)=0\r\},\\
	\mathcal{N}_\lambda^-
	&=\l\{u \in \mathcal{N}_\lambda: (p+\kappa-1)\| u\|_{1,p}^p+(q+\kappa-1)\|\nabla u\|_{q,\mu}^q+(p_* +\kappa-1)\|u\|_{p_* ,\beta,\partial\Omega}^{p_* }\r.\\
	&\l. \qquad\qquad \qquad -\lambda (q_1+\kappa-1) \|u\|_{q_1}^{q_1 }<0\r\}\\
	& =\l\{u\in\mathcal{N}_\lambda\,:\, \psi_u''(1)<0\r\}.
\end{align*}

In general, the energy functional $\Theta_\lambda$ is not coercive on $\WH$, but it is on the manifold $\mathcal{N}_\lambda$ as stated in the next proposition.

\begin{proposition}\label{proposition_coerivity}
	If hypotheses \textnormal{(H)}  hold, then $\Theta_\lambda\big|_{\mathcal{N}_\lambda}$ is coercive.
\end{proposition}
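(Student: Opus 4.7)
The plan is to use the Nehari constraint to eliminate the superlinear term $\lambda\|u\|_{q_1}^{q_1}$ from $\Theta_\lambda$, and then exploit the fact that every remaining ``principal part'' term has a strictly positive coefficient, so that the only bad contribution is the sublinear singular term.

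First I would rewrite $\Theta_\lambda$ on $\mathcal{N}_\lambda$. For $u\in\mathcal{N}_\lambda$ we have
\[
\lambda\|u\|_{q_1}^{q_1} = \|u\|_{1,p}^p+\|\nabla u\|_{q,\mu}^q+\|u\|_{p_*,\beta,\partial\Omega}^{p_*} - \into \zeta(x)|u|^{1-\kappa}\diff x.
\]
Substituting this into the expression for $\Theta_\lambda(u)$ and collecting terms yields
\begin{align*}
\Theta_\lambda(u)
&= \left(\tfrac{1}{p}-\tfrac{1}{q_1}\right)\|u\|_{1,p}^p
+ \left(\tfrac{1}{q}-\tfrac{1}{q_1}\right)\|\nabla u\|_{q,\mu}^q
+ \left(\tfrac{1}{p_*}-\tfrac{1}{q_1}\right)\|u\|_{p_*,\beta,\partial\Omega}^{p_*}\\
&\quad - \left(\tfrac{1}{1-\kappa}-\tfrac{1}{q_1}\right)\into \zeta(x)|u|^{1-\kappa}\diff x.
\end{align*}
By hypothesis \textnormal{(H)(ii)} we have $q_1>\max\{q,p_*\}>p$, so all three coefficients in front of the principal-part terms are strictly positive.

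Next I would set $c_0 = \min\!\left\{\tfrac{1}{p}-\tfrac{1}{q_1},\,\tfrac{1}{q}-\tfrac{1}{q_1},\,\tfrac{1}{p_*}-\tfrac{1}{q_1}\right\}>0$ and recognize that the sum of the three principal terms is exactly the modular $\rho(u)$ from \eqref{modular}. This gives
\[
\Theta_\lambda(u) \geq c_0\,\rho(u) - \left(\tfrac{1}{1-\kappa}-\tfrac{1}{q_1}\right)\|\zeta\|_\infty \into |u|^{1-\kappa}\diff x.
\]
The singular integral is controlled by H\"older's inequality and the continuous embedding $\WH\hookrightarrow L^p(\Omega)$ from Proposition \ref{proposition_embeddings}\textnormal{(ii)} (together with the equivalence of norms guaranteed by Proposition \ref{proposition_equivalent_norm}): there is a constant $C>0$ with
\[
\into |u|^{1-\kappa}\diff x \leq |\Omega|^{\frac{p-(1-\kappa)}{p}}\|u\|_p^{1-\kappa} \leq C\,\|u\|^{1-\kappa}.
\]

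Finally, for $u\in\mathcal{N}_\lambda$ with $\|u\|\geq 1$, Proposition \ref{proposition_modular_properties}\textnormal{(iv)} gives $\rho(u)\geq\|u\|^p$, so
\[
\Theta_\lambda(u)\ge c_0\|u\|^p - C_1\|u\|^{1-\kappa}
\]
for some $C_1>0$. Since $0<1-\kappa<1<p$, the right-hand side tends to $+\infty$ as $\|u\|\to\infty$, which proves coercivity. There is no real obstacle here beyond bookkeeping; the key point that makes the argument work is the assumption $q_1>\max\{q,p_*\}$, which ensures all three coefficients $\tfrac{1}{p}-\tfrac{1}{q_1},\tfrac{1}{q}-\tfrac{1}{q_1},\tfrac{1}{p_*}-\tfrac{1}{q_1}$ are positive so that the Nehari substitution yields a modular-type lower bound rather than being destroyed by the critical boundary term.
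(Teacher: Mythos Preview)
Your proof is correct and follows essentially the same route as the paper's: substitute the Nehari constraint to eliminate $\lambda\|u\|_{q_1}^{q_1}$, observe that the three coefficients $\tfrac{1}{p}-\tfrac{1}{q_1}$, $\tfrac{1}{q}-\tfrac{1}{q_1}$, $\tfrac{1}{p_*}-\tfrac{1}{q_1}$ are positive by \textnormal{(H)(ii)}, bound below by a multiple of the modular $\rho(u)$, control the singular term by H\"older and the embedding, and finish with Proposition~\ref{proposition_modular_properties}\textnormal{(iv)} for $\|u\|\ge 1$. The only cosmetic difference is that the paper cites Hewitt--Stromberg for the sublinear integral estimate where you write out H\"older explicitly.
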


\begin{proof}
	Let $u\in \mathcal{N}_\lambda$ be such that $\|u\|>1$. By the definition of $\mathcal{N}_\lambda$ we have
	\begin{align}\label{prop_1}
		-\frac{\lambda}{q_1}\|u\|_{q_1}^{q_1}=-\frac{1}{q_1}  \| u\|_{1,p}^p-\frac{1}{q_1}\|\nabla u\|_{q,\mu}^q-\frac{1}{q_1}\|u\|_{p_* ,\beta,\partial\Omega}^{p_* } +\frac{1}{q_1}\into \zeta(x)|u|^{1-\kappa}\diff x.
	\end{align}
	Applying \eqref{prop_1}, Proposition \ref{proposition_modular_properties}\textnormal{(iv)} and Theorem 13.17 of Hewitt-Stromberg \cite[p.\,196]{Hewitt-Stromberg-1965} we obtain
	\begin{align}\label{prop_1a}
		\begin{split}
		\Theta_\lambda(u)&=\l[\frac{1}{p}-\frac{1}{q_1} \r]\|u\|_{1,p}^p+\l[\frac{1}{q}-\frac{1}{q_1} \r]\|\nabla u\|_{q,\mu}^q+\l[\frac{1}{p_* }-\frac{1}{q_1} \r]\|u\|_{p_* ,\beta,\partial\Omega}^{p_* }\\
		&\quad +\l[\frac{1}{q_1}-\frac{1}{1-\kappa} \r]\into\zeta(x)|u|^{1-\kappa}\diff x\\
		& \geq c_1\rho(u) +\l[\frac{1}{q_1}-\frac{1}{1-\kappa} \r]\into\zeta(x)|u|^{1-\kappa}\diff x\\
		& \geq c_1 \|u\|^p-c_2\|u\|^{1-\kappa}
		\end{split}
	\end{align}
	since $p<q<q_1$ and $p_* <q_1$ by hypothesis \textnormal{(H)(ii)} and for some positive constants $c_1, c_2$. Thus, the coercivity of $\Theta_\lambda$ on $\mathcal{N}_\lambda$ follows from \eqref{prop_1a}  as $1-\kappa<1<p$.
\end{proof}

Now we are going to prove that the global minimum of $\Theta_\lambda$ on $\mathcal{N}_\lambda^+$ is negative provided $\mathcal{N}_\lambda^+\neq \emptyset$. The nonemptiness of $\mathcal{N}_\lambda^+$ will be proved later in Proposition \ref{proposition_nonemptiness_and_existence}. To this end, let $m_\lambda^+=\inf_{\mathcal{N}_\lambda^+}\Theta_\lambda$.

\begin{proposition}\label{proposition_negative_energy}
	If hypotheses \textnormal{(H)}  hold and if $\mathcal{N}_\lambda^+\neq \emptyset$, then $\Theta_\lambda \big|_{\mathcal{N}_\lambda^+}<0$. In particular,  $m_\lambda^+<0$.
\end{proposition}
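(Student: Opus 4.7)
Fix $u \in \mathcal{N}_\lambda^+$. The plan is to combine the Nehari identity $\psi_u'(1)=0$ with the strict inequality $\psi_u''(1)>0$ in a purely algebraic way to produce a sign estimate for $\Theta_\lambda(u)$. First, I will use the Nehari constraint
\begin{align*}
\|u\|_{1,p}^p+\|\nabla u\|_{q,\mu}^q+\|u\|_{p_*,\beta,\partial\Omega}^{p_*}=\into \zeta(x)|u|^{1-\kappa}\diff x+\lambda\|u\|_{q_1}^{q_1}
\end{align*}
to eliminate $\lambda\|u\|_{q_1}^{q_1}/q_1$ from $\Theta_\lambda(u)$. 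This yields the identity
\begin{align*}
\Theta_\lambda(u)&=\l(\tfrac{1}{p}-\tfrac{1}{q_1}\r)\|u\|_{1,p}^p+\l(\tfrac{1}{q}-\tfrac{1}{q_1}\r)\|\nabla u\|_{q,\mu}^q+\l(\tfrac{1}{p_*}-\tfrac{1}{q_1}\r)\|u\|_{p_*,\beta,\partial\Omega}^{p_*}\\
&\quad -\l(\tfrac{1}{1-\kappa}-\tfrac{1}{q_1}\r)\into\zeta(x)|u|^{1-\kappa}\diff x,
\end{align*}
in which the first three coefficients are strictly positive (by $q_1>\max\{q,p_*\}$) and the last is strictly negative (by $1-\kappa<1<q_1$).

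Next I will exploit that $u\in\mathcal{N}_\lambda^+$, i.e.
\begin{align*}
\lambda(q_1+\kappa-1)\|u\|_{q_1}^{q_1}<(p+\kappa-1)\|u\|_{1,p}^p+(q+\kappa-1)\|\nabla u\|_{q,\mu}^q+(p_*+\kappa-1)\|u\|_{p_*,\beta,\partial\Omega}^{p_*}.
\end{align*}
Combining this inequality with the Nehari identity solved for $\int_\Omega\zeta(x)|u|^{1-\kappa}\diff x=\|u\|_{1,p}^p+\|\nabla u\|_{q,\mu}^q+\|u\|_{p_*,\beta,\partial\Omega}^{p_*}-\lambda\|u\|_{q_1}^{q_1}$ produces the strict lower bound
\begin{align*}
\into\zeta(x)|u|^{1-\kappa}\diff x>\frac{(q_1-p)\|u\|_{1,p}^p+(q_1-q)\|\nabla u\|_{q,\mu}^q+(q_1-p_*)\|u\|_{p_*,\beta,\partial\Omega}^{p_*}}{q_1+\kappa-1},
\end{align*}
all three numerator coefficients again positive thanks to hypothesis \textnormal{(H)(ii)}.

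Substituting this lower bound into the rewritten expression for $\Theta_\lambda(u)$ — and recalling that the singular integral carries a strictly negative coefficient — a clean simplification will give
\begin{align*}
\Theta_\lambda(u)<\frac{q_1-p}{q_1}\l(\tfrac{1}{p}-\tfrac{1}{1-\kappa}\r)\|u\|_{1,p}^p+\frac{q_1-q}{q_1}\l(\tfrac{1}{q}-\tfrac{1}{1-\kappa}\r)\|\nabla u\|_{q,\mu}^q+\frac{q_1-p_*}{q_1}\l(\tfrac{1}{p_*}-\tfrac{1}{1-\kappa}\r)\|u\|_{p_*,\beta,\partial\Omega}^{p_*}.
\end{align*}
Each prefactor $\tfrac{q_1-\,\cdot\,}{q_1}$ is positive and each bracket $\tfrac{1}{\,\cdot\,}-\tfrac{1}{1-\kappa}$ is strictly negative since $p,q,p_*>1>1-\kappa$. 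Moreover hypothesis \textnormal{(H)(iii)} (with $\alpha\not\equiv 0$) and $u\neq 0$ force $\|u\|_{1,p}^p>0$, so the right-hand side is strictly negative. Passing to the infimum over $\mathcal{N}_\lambda^+$ gives $m_\lambda^+<0$.

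The only real obstacle is keeping the algebra organized; there is no analytic difficulty, and no compactness or convergence argument is needed. A tempting alternative — arguing that $\psi_u(t)<0$ for small $t$ and that $t=1$ is a local minimum of $\psi_u$, hence $\psi_u(1)<0$ by monotonicity — does not quite work because $\psi_u$ may a priori have intermediate critical points between $0$ and $1$, so the direct substitution above is the safer route.
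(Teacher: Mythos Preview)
Your proof is correct and follows essentially the same purely algebraic strategy as the paper: combine the Nehari identity $\psi_u'(1)=0$ with the strict inequality $\psi_u''(1)>0$ to obtain a negative upper bound for $\Theta_\lambda(u)$, and indeed your final estimate coincides term-by-term with the paper's. The only cosmetic difference is the order of elimination---the paper uses the Nehari identity to remove the singular integral and then bounds $\lambda\|u\|_{q_1}^{q_1}$ via the $\mathcal{N}_\lambda^+$ inequality, whereas you eliminate $\lambda\|u\|_{q_1}^{q_1}$ first and bound the singular integral---but this is the same computation.
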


\begin{proof}
	Let $u \in \mathcal{N}_\lambda^+\neq\emptyset$. By the definition of $\mathcal{N}_\lambda^+$ we get
	\begin{align}\label{prop_2}
		\lambda \|u\|_{q_1}^{q_1}<\frac{p+\kappa-1}{q_1+\kappa-1}\|u\|_{1,p}^p+\frac{q+\kappa-1}{q_1+\kappa-1}\|\nabla u\|_{q,\mu}^q +\frac{p_* +\kappa-1}{q_1+\kappa-1}\|u\|_{p_* ,\beta,\partial\Omega}^{p_* }.
	\end{align}
	Since $\mathcal{N}_\lambda^+\subset \mathcal{N}_\lambda$ we have by the definition of $\mathcal{N}_\lambda$
	\begin{align}\label{prop_3}
		-\frac{1}{1-\kappa}\into\zeta(x)|u|^{1-\kappa}\diff x =-\frac{1}{1-\kappa} \l(\|u\|_{1,p}^p+\|\nabla u\|_{q,\mu}^q+\|u\|_{p_* ,\beta,\partial\Omega}^{p_* }\r)+\frac{\lambda}{1-\kappa}\|u\|_{q_1}^{q_1}.
	\end{align}
	Applying \eqref{prop_3} and \eqref{prop_2} leads to
	\begin{align*}
		\begin{split}
			\Theta_{\lambda}(u)&=\frac{1}{p} \|u\|_{1,p}^p+\frac{1}{q}\|\nabla u\|_{q,\mu}^q+\frac{1}{p_* }\|u\|_{p_* ,\beta,\partial\Omega}^{p_* }-\frac{1}{1-\kappa}\into\zeta(x)|u|^{1-\kappa}\diff x-\frac{\lambda}{q_1}\|u\|_{q_1}^{q_1}\\
			& =\l[\frac{1}{p} -\frac{1}{1-\kappa}\r]\|u\|_{1,p}^p
			+\l[\frac{1}{q} -\frac{1}{1-\kappa}\r]\|\nabla u\|_{q,\mu}^q
			+\l[\frac{1}{p_* } -\frac{1}{1-\kappa}\r]\|u\|_{p_* ,\beta,\partial\Omega}^{p_* }\\
			&\quad
			+\lambda \l[\frac{1}{1-\kappa}-\frac{1}{q_1}\r]\|u\|_{q_1}^{q_1}\\
			& \leq \l[\frac{-(p+\kappa-1)}{p(1-\kappa)}+\frac{p+\kappa-1}{q_1+\kappa-1}\cdot \frac{q_1+\kappa-1}{q_1(1-\kappa)}\r]\|u\|_{1,p}^p\\
			&\quad +\l[\frac{-(q+\kappa-1)}{q(1-\kappa)}+\frac{q+\kappa-1}{q_1+\kappa-1}\cdot \frac{q_1+\kappa-1}{q_1(1-\kappa)}\r]\|\nabla u\|_{q,\mu}^q\\
			&\quad +\l[\frac{-(p_* +\kappa-1)}{p_* (1-\kappa)}+\frac{p_* +\kappa-1}{q_1+\kappa-1}\cdot \frac{q_1+\kappa-1}{q_1(1-\kappa)}\r]\|u\|_{p_* ,\beta,\partial\Omega}^{p_* }\\
			& = \frac{p+\kappa-1}{1-\kappa}\l[\frac{1}{q_1}-\frac{1}{p}\r]\|u\|_{1,p}^p+\frac{q+\kappa-1}{1-\kappa}\l[\frac{1}{q_1}-\frac{1}{q}\r]\|\nabla u\|_{q,\mu}^q\\
			&\quad +\frac{p_* +\kappa-1}{1-\kappa}\l[\frac{1}{q_1}-\frac{1}{p_* }\r]\| u\|_{p_* ,\beta,\partial\Omega}^{p_* }\\
			&<0, 
		\end{split}
	\end{align*}
	since $p<q<q_1$ and $p_* <q_1$, see hypotheses \textnormal{(H)(i)} and \textnormal{(ii)}. This shows that $\Theta_\lambda \big|_{\mathcal{N}_\lambda^+}<0$ and so $m_\lambda^+<0$.
\end{proof}

The next proposition shows that $\mathcal{N}_\lambda^\circ$ is empty provided the parameter $\lambda>0$ is sufficiently small.

\begin{proposition}\label{proposition_emptiness}
	 If hypotheses \textnormal{(H)}  hold, then there exists $\hat{\lambda}>0$ such that $\mathcal{N}^\circ_\lambda=\emptyset$ for all $\lambda \in (0,\hat{\lambda})$.
\end{proposition}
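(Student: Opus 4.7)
Fix $\lambda > 0$ and suppose, for contradiction, that there exists $u \in \mathcal{N}_\lambda^\circ$. Setting $A := \|u\|_{1,p}^p$, $B := \|\nabla u\|_{q,\mu}^q$, $C := \|u\|_{p_* ,\beta,\partial\Omega}^{p_* }$, $D := \into \zeta(x) |u|^{1-\kappa}\diff x$ and $E := \|u\|_{q_1}^{q_1}$, the conditions $\psi_u'(1) = 0$ and $\psi_u''(1) = 0$ defining $\mathcal{N}_\lambda^\circ$ read
\begin{align*}
	A + B + C &= D + \lambda E,\\
	(p+\kappa-1)A + (q+\kappa-1)B + (p_* +\kappa-1)C &= \lambda (q_1+\kappa-1)E.
\end{align*}
The plan is to combine these two identities to bound $\|u\|$ from above by a constant independent of $\lambda$, and then to use the second one alone to bound $\|u\|$ from below by a quantity that blows up as $\lambda \downarrow 0$. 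The two bounds will be incompatible once $\lambda$ is small enough, yielding the desired $\hat{\lambda}$.

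For the upper bound, I would multiply the first equation by $(q_1+\kappa-1)$ and subtract the second, eliminating the $\lambda E$ term:
\begin{align*}
	(q_1-p)A + (q_1-q)B + (q_1-p_* )C = (q_1+\kappa-1)D.
\end{align*}
By \textnormal{(H)(ii)} the quantity $\delta := q_1 - \max\{q,p_*\}$ is strictly positive, so dropping the larger coefficients on the left gives $\rho(u) = A+B+C \leq \frac{q_1+\kappa-1}{\delta}\, D$. Since $\zeta \in \Linf$, H\"older's inequality together with the continuous embedding $\WH \hookrightarrow \Lp{p}$ from Proposition \ref{proposition_embeddings}\textnormal{(ii)} provides $D \leq C_1 \|u\|^{1-\kappa}$ for some $C_1$ independent of $\lambda$. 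Invoking Proposition \ref{proposition_modular_properties}\textnormal{(iii)}, \textnormal{(iv)} (and using $1-\kappa < p$), this translates into a uniform bound $\|u\| \leq M$ with $M > 0$ independent of $\lambda$.

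For the lower bound, note that $p+\kappa-1$ is the smallest of the three coefficients on the left of the second equation (since $p < q$ and $p < p_*$), hence $(p+\kappa-1)\rho(u) \leq \lambda(q_1+\kappa-1) E$. Since $q_1 < p^*$ by \textnormal{(H)(ii)}, Proposition \ref{proposition_embeddings}\textnormal{(ii)} yields $E \leq C_2 \|u\|^{q_1}$, so $\rho(u) \leq \lambda K \|u\|^{q_1}$ with $K > 0$ independent of $\lambda$. Splitting $\|u\| \geq 1$ versus $\|u\| < 1$ in Proposition \ref{proposition_modular_properties}\textnormal{(iii)}, \textnormal{(iv)} produces, in the former case, $\|u\| \geq (\lambda K)^{-1/(q_1-p)}$, while the latter case is ruled out as soon as $\lambda K$ is so small that $(\lambda K)^{-1/(q_1-q)} > 1$. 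Choosing $\hat{\lambda} > 0$ with $\hat{\lambda} K < 1$ and $(\hat{\lambda} K)^{-1/(q_1-p)} > M$ then forces $\mathcal{N}_\lambda^\circ = \emptyset$ for every $\lambda \in (0,\hat{\lambda})$. The only technical subtlety is this case split in Proposition \ref{proposition_modular_properties}, but both sub-cases yield compatible conclusions because every exponent that appears ($p+\kappa-1$, $q_1-p$, $q_1-q$) is strictly positive.
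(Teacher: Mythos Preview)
Your argument is correct and follows essentially the same route as the paper: both derive the identity $(q_1-p)A+(q_1-q)B+(q_1-p_*)C=(q_1+\kappa-1)D$ from the two defining equations, use it together with $\rho(u)=A+B+C$ and Proposition~\ref{proposition_modular_properties} to obtain a $\lambda$-independent upper bound on $\|u\|$, and then extract from the second equation alone a lower bound on $\|u\|$ that blows up as $\lambda\to 0^+$. The only cosmetic differences are that the paper phrases the contradiction by letting $\lambda\to 0^+$ rather than isolating an explicit threshold $\hat\lambda$, and invokes Hewitt--Stromberg where you invoke H\"older and the embedding directly.
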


\begin{proof}
	We argue indirectly and assume that for every $\hat{\lambda}>0$ we can find  $\lambda \in (0,\hat{\lambda})$ such that $\mathcal{N}^\circ_\lambda \neq \emptyset$. This means that for such $\lambda>0$ there exists $u\in \mathcal{N}_\lambda^\circ$ such that
	\begin{align}\label{prop_4}
		(p+\kappa-1)\|u\|_{1,p}^p+(q+\kappa-1)\|\nabla u\|_{q,\mu}^q+(p_* +\kappa-1)\|u\|_{p_* ,\beta,\partial\Omega}^{p_* }=\lambda (q_1+\kappa-1) \|u\|_{q_1}^{q_1}.
	\end{align}
	We know that $\mathcal{N}_\lambda^\circ\subset \mathcal{N}_\lambda$ and so $u \in \mathcal{N}_\lambda$, that is,
	\begin{align}\label{prop_5}
		\begin{split}
		&(q_1+\kappa-1) \|u\|_{1,p}^p+(q_1+\kappa-1) \|\nabla u\|_{q,\mu}^q+(q_1+\kappa-1) \|u\|_{p_* ,\beta,\partial\Omega}^{p_* }\\
		& = (q_1+\kappa-1)\into\zeta(x)|u|^{1-\kappa}\diff x+\lambda (q_1+\kappa-1)\|u\|_{q_1}^{q_1}.
		\end{split}
	\end{align}
	Subtracting \eqref{prop_4} from \eqref{prop_5} we obtain
	\begin{align}\label{prop_6}
		\begin{split}
			&(q_1-p) \|u\|_{1,p}^p+(q_1-q) \|\nabla u\|_{q,\mu}^q+(q_1-p_* ) \|u\|_{p_* ,\beta,\partial\Omega}^{p_* }= (q_1+\kappa-1)\into\zeta(x)|u|^{1-\kappa}\diff x.
		\end{split}
	\end{align}
	Since $0<1-\kappa<1<p<q<q_1$ and $p_* <q_1$ we can use Proposition \ref{proposition_modular_properties}\textnormal{(iii), (iv)} to the left-hand side of \eqref{prop_6} and Theorem 13.17 of Hewitt-Stromberg \cite[p.\,196]{Hewitt-Stromberg-1965} along with Proposition \ref{proposition_embeddings}\textnormal{(ii)} to the right-hand side of \eqref{prop_6} in order to get
	\begin{align*}
		\min\l\{\|u\|^p,\|u\|^q\r\} \leq c_3 \|u\|^{1-\kappa}
	\end{align*}
	for some constant $c_3>0$. As $0<1-\kappa<1<p<q$ this implies \begin{align}\label{prop_7}
		\|u\| \leq c_4
	\end{align}
	for some $c_4>0$. However, from equation \eqref{prop_4} we deduce
	\begin{align}\label{prop_7a}
		\min\l\{\|u\|^p,\|u\|^q\r\} \leq \lambda c_5 \|u\|^{q_1}
	\end{align}
	for some $c_5>0$ where we have used Propositions \ref{proposition_modular_properties}\textnormal{(iii), (iv)} and \ref{proposition_embeddings}\textnormal{(ii)} . From \eqref{prop_7a} we then conclude that
	\begin{align*}
		\|u\| \geq \l(\frac{1}{\lambda c_5}\r)^{\frac{1}{q_1-p}}
		\quad\text{or}\quad
		\|u\| \geq \l(\frac{1}{\lambda c_5}\r)^{\frac{1}{q_1-q}}.
	\end{align*}
	Letting $\lambda \to 0^+$, it follows $\|u\|\to +\infty$ since $p<q<q_1$ contradicting \eqref{prop_7}. This proves the emptiness of $\mathcal{N}_\lambda^\circ$ for all $\lambda\in(0,\hat{\lambda})$.
\end{proof}

Next we are going to prove the nonemptiness of the sets $\mathcal{N}_\lambda^{\pm}$ for small values of the parameter $\lambda>0$ and we will show that the functional $\Theta_\lambda$ achieves its global minimum restricted to the set $\mathcal{N}_\lambda^+$.

\begin{proposition}\label{proposition_nonemptiness_and_existence}
	If hypotheses \textnormal{(H)}  hold, then there exists $\tilde{\lambda}\in (0,\hat{\lambda}]$ such that $\mathcal{N}^{\pm}_\lambda\neq \emptyset$ for all $\lambda \in (0,\tilde{\lambda})$ and for any $\lambda \in (0,\tilde{\lambda})$ there exists $u_\lambda\in\mathcal{N}_\lambda^+$ such that $\Theta_\lambda(u_\lambda)=m_\lambda^+<0$ and $u_\lambda(x) \geq 0$ for a.\,a.\,$x\in\Omega$.
\end{proposition}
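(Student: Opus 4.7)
The plan is to combine a fibering analysis of $\psi_u(t)=\Theta_\lambda(tu)$ with a direct minimization of $\Theta_\lambda$ on $\mathcal{N}_\lambda^+$. The key algebraic observation is that
\[
\psi_u'(t)=t^{-\kappa}\bigl(\xi_u(t)-c_u\bigr),\qquad c_u:=\into \zeta(x)|u|^{1-\kappa}\diff x,
\]
with
\[
\xi_u(t)=t^{p+\kappa-1}\|u\|_{1,p}^p+t^{q+\kappa-1}\|\nabla u\|_{q,\mu}^q+t^{p_* +\kappa-1}\|u\|_{p_* ,\beta,\partial\Omega}^{p_* }-\lambda t^{q_1+\kappa-1}\|u\|_{q_1}^{q_1}.
\]
Since $q_1$ strictly dominates $p,q,p_*$ by \textnormal{(H)(ii)}, $\xi_u$ vanishes at $0$, tends to $-\infty$ at $+\infty$, and admits a strict positive maximum $M_u(\lambda)$ with $M_u(\lambda)\to+\infty$ as $\lambda\to 0^+$. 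Whenever $c_u<M_u(\lambda)$, the equation $\psi_u'(t)=0$ has exactly two positive roots $t_1(u)<t_2(u)$; differentiating the factorization shows that $\psi_u''(t_i)$ has the same sign as $\xi_u'(t_i)$, giving $t_1(u)u\in \mathcal{N}_\lambda^+$ and $t_2(u)u\in \mathcal{N}_\lambda^-$.

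\textbf{Nonemptiness and minimizer.} For nonemptiness of $\mathcal{N}_\lambda^\pm$ I would feed a fixed nonzero admissible test function (e.g.\,$u_0\equiv 1$, admissible in this Neumann setting and yielding $c_{u_0}>0$ by \textnormal{(H)(v)}) into the construction and pick $\tilde\lambda\in(0,\hat\lambda]$ small enough that $c_{u_0}<M_{u_0}(\lambda)$ for every $\lambda\in(0,\tilde\lambda)$. For the minimizer, take $\{u_n\}\subset \mathcal{N}_\lambda^+$ with $\Theta_\lambda(u_n)\to m_\lambda^+<0$ (Proposition~\ref{proposition_negative_energy}), replace $u_n$ by $|u_n|$ (which preserves $\Theta_\lambda$ and membership in $\mathcal{N}_\lambda^+$), and use the coercivity from Proposition~\ref{proposition_coerivity} to extract a subsequence $u_n\rightharpoonup u_\lambda\geq 0$ in $\WH$. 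Compactness of $\WH\hookrightarrow L^r(\Omega)$ for $r\in[1,p^*)$ gives strong convergence of the $q_1$-term ($q_1<p^*$) and, via Vitali's theorem, of the singular term ($1-\kappa<p^*$ combined with pointwise convergence). With weak lower semicontinuity of the remaining three terms this yields $\Theta_\lambda(u_\lambda)\leq m_\lambda^+<0$, forcing $u_\lambda\not\equiv 0$. Applying the fibering construction to $u_\lambda$ produces a unique $t_1(u_\lambda)>0$ with $t_1(u_\lambda)u_\lambda\in \mathcal{N}_\lambda^+$; passing to $\liminf$ in the Nehari identity for $u_n$ gives $\psi_{u_\lambda}'(1)\leq 0$, and the shape of $\psi_{u_\lambda}$ together with $\Theta_\lambda(t_1(u_\lambda)u_\lambda)\geq m_\lambda^+\geq \Theta_\lambda(u_\lambda)=\psi_{u_\lambda}(1)$ pins down $t_1(u_\lambda)=1$. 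Hence $u_\lambda\in\mathcal{N}_\lambda^+$ and $\Theta_\lambda(u_\lambda)=m_\lambda^+$.

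\textbf{Main obstacle.} The critical boundary exponent $p_*$ is the hard point: since $\WH\hookrightarrow L^{p_*}(\partial\Omega)$ is only continuous, the boundary contribution in the Nehari identity passes to the limit merely as an inequality, and the weak convergence of $\{u_n\}$ cannot be upgraded to strong convergence by energy arguments alone. The fibering analysis at the weak limit is precisely what compensates: identifying the correct scaling $t_1(u_\lambda)=1$ places $u_\lambda$ on the Nehari manifold without requiring strong convergence on $\partial\Omega$. The most delicate sub-step is ruling out the parasitic alternative root $1\geq t_2(u_\lambda)$ of $\psi_{u_\lambda}'(1)\leq 0$; one must argue, using that every $u_n\in\mathcal{N}_\lambda^+$ satisfies a strict inequality that survives the l.s.c.\,loss in the limit (together with $\Theta_\lambda(u_\lambda)<0$, which is incompatible with $u_\lambda$ lying on the $\mathcal{N}_\lambda^-$ branch for $\lambda$ small), that the weak limit cannot jump to the wrong branch.
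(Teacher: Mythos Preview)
Your outline has the right overall shape, but there are two genuine gaps, and they interact.

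\textbf{Uniformity of $\tilde\lambda$.} Choosing $\tilde\lambda$ so that $c_{u_0}<M_{u_0}(\lambda)$ for a \emph{fixed} test function $u_0$ is enough to show $\mathcal{N}_\lambda^\pm\neq\emptyset$, but later you write ``applying the fibering construction to $u_\lambda$ produces a unique $t_1(u_\lambda)$''. This needs $c_{u_\lambda}<M_{u_\lambda}(\lambda)$, which your $\tilde\lambda$ (tailored to $u_0$) does not guarantee; for all you know, $\psi_{u_\lambda}$ could be strictly decreasing on $(0,\infty)$ and have no critical points. The paper secures a $\tilde\lambda$ that works for \emph{every} nonzero $u$ by comparing $\eta_u$ with the simpler auxiliary function $\tilde\eta_u(t)=t^{p-q_1}\|u\|_{1,p}^p-t^{-q_1-\kappa+1}c_u$, computing its maximum explicitly, and using the Sobolev inequality $S\|u\|_{p^*}^p\le\|u\|_{1,p}^p$ together with $\int\zeta|u|^{1-\kappa}\le c\|u\|_{p^*}^{1-\kappa}$ to get $\tilde\eta_u(\tilde t_u^\circ)-\lambda\|u\|_{q_1}^{q_1}\ge(c_8-\lambda c_7)\|u\|_{p^*}^{q_1}$. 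This is the step that makes the threshold independent of $u$.

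\textbf{The ``parasitic'' branch.} Even granting two fibering roots $t_1<t_2$ for $u_\lambda$, your energy comparison $\psi_{u_\lambda}(t_1)\ge m_\lambda^+\ge\psi_{u_\lambda}(1)$ does not exclude $1\ge t_2$: since $\psi_{u_\lambda}(t)\to-\infty$ as $t\to\infty$, it is perfectly consistent to have $\psi_{u_\lambda}(1)<\psi_{u_\lambda}(t_1)$ with $1$ far to the right of $t_2$. Your suggested fixes do not close this. The $\mathcal{N}_\lambda^+$ strict inequality for $u_n$ does \emph{not} survive weak lower semicontinuity in the direction you need: w.l.s.c.\ gives $\liminf_n(\text{LHS})(u_n)\ge(\text{LHS})(u_\lambda)$, while you would need $(\text{LHS})(u_\lambda)\ge\lim_n(\text{RHS})(u_n)$. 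And the remark that $\Theta_\lambda(u_\lambda)<0$ is incompatible with $u_\lambda\in\mathcal{N}_\lambda^-$ is beside the point, since in the bad case $1>t_2$ one has $\psi'_{u_\lambda}(1)<0$ strictly and $u_\lambda$ is on no Nehari branch at all.

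The paper resolves this not by a branch-selection argument but by upgrading to \emph{strong} convergence $u_n\to u_\lambda$ in $\WH$. It shows, by contradiction and piece by piece, that $\|u_n\|_{1,p}^p$, $\|\nabla u_n\|_{q,\mu}^q$ and $\|u_n\|_{p_*,\beta,\partial\Omega}^{p_*}$ all converge: if, say, $\liminf\|u_n\|_{1,p}^p>\|u_\lambda\|_{1,p}^p$, then $\liminf_n\psi'_{u_n}(t^1_{u_\lambda})>\psi'_{u_\lambda}(t^1_{u_\lambda})=0$, which (because $u_n\in\mathcal{N}_\lambda^+$ forces $\psi'_{u_n}(t)<0$ on $(0,1)$) yields $t^1_{u_\lambda}>1$ and hence $\Theta_\lambda(t^1_{u_\lambda}u_\lambda)\le\Theta_\lambda(u_\lambda)<m_\lambda^+$, a contradiction. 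Note this step uses the existence of $t^1_{u_\lambda}$, i.e.\ the uniform $\tilde\lambda$ from the first gap. With strong convergence in hand, one passes to the limit in both the Nehari identity and the $\mathcal{N}_\lambda^+$ inequality with equality/$\ge$, and then $\mathcal{N}_\lambda^\circ=\emptyset$ (Proposition~\ref{proposition_emptiness}) makes the latter strict.
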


\begin{proof}
	Let $u\in \WH$ be such that $u\not\equiv 0$. We consider the function $\tilde{\eta}_u\colon (0,+\infty) \to \R$ defined by
	\begin{align}\label{prop_9c}
	\tilde{\eta}_u(t)= t^{p-q_1}\|u\|_{1,p}^p-t^{-q_1-\kappa+1}\into\zeta(x)|u|^{1-\kappa} \diff x.
	\end{align}
	Recall that  $q_1-p<q_1+\kappa-1$. Hence, there exists a unique $\tilde{t}_u^\circ>0$ such that
	\begin{align*}
		\tilde{\eta}_u\l(\tilde{t}^\circ_u\r)=\max_{t>0} \tilde{\eta}_u(t).
	\end{align*}
	This means
	\begin{align*}
		(p-q_1)\l(\tilde{t}_u^\circ\r)^{p-q_1-1}\|u\|_{1,p}^p+(q_1+\kappa-1)\l(\tilde{t}_u^\circ\r)^{-q_1-\kappa}\into\zeta(x)|u|^{1-\kappa} \diff x=0
	\end{align*}
	and so
	\begin{align}\label{prop_9b}
		\tilde{t}^\circ_u=\l[\frac{(q_1+\kappa-1)\displaystyle \into\zeta(x)|u|^{1-\kappa} \diff x}{(q_1-p)\|u\|_{1,p}^p}\r]^{\frac{1}{p+\kappa-1}}.
	\end{align}
	Using \eqref{prop_9b} into the definition of $\tilde{\eta}_u$ in \eqref{prop_9c} gives
	\begin{align}\label{prop_40}
		\begin{split}
			\tilde{\eta}_u\l(\tilde{t}^\circ_u\r)
			&=\frac{\Big[(q_1-p) \|u\|_{1,p}^p \Big]^{\frac{q_1-p}{p+\kappa-1}}}{\l[(q_1+\kappa-1)\displaystyle \into\zeta(x)|u|^{1-\kappa}\diff x \r]^{\frac{q_1-p}{p+\kappa-1}}}\|u\|_{1,p}^p\\
			&\quad -\frac{\Big[(q_1-p) \|u\|_{1,p}^p \Big]^{\frac{q_1+\kappa-1}{p+\kappa-1}}}{\l[(q_1+\kappa-1)\displaystyle \into\zeta(x)|u|^{1-\kappa}\diff x \r]^{\frac{q_1+\kappa-1}{p+\kappa-1}}}\displaystyle \into\zeta(x)|u|^{1-\kappa}\diff x\\
			&=\frac{(q_1-p)^{\frac{q_1-p}{p+\kappa-1}} \| u\|_{1,p}^{\frac{p(q_1+\kappa-1)}{p+\kappa-1}}}{(q_1+\kappa-1)^{\frac{q_1-p}{p+\kappa-1}}\l[\displaystyle \into\zeta(x)|u|^{1-\kappa}\diff x \r]^{\frac{q_1-p}{p+\kappa-1}}}\\
			&\quad -\frac{(q_1-p)^{\frac{q_1+\kappa-1}{p+\kappa-1}} \| u\|_{1,p}^{\frac{p(q_1+\kappa-1)}{p+\kappa-1}}}{(q_1+\kappa-1)^{\frac{q_1+\kappa-1}{p+\kappa-1}}\l[\displaystyle \into\zeta(x)|u|^{1-\kappa}\diff x \r]^{\frac{q_1-p}{p+\kappa-1}}}\\
			&=\frac{p+\kappa-1}{q_1-p} \l[\frac{q_1-p}{q_1+\kappa-1}\r]^{\frac{q_1+\kappa-1}{p+\kappa-1}}\frac{\| u\|_{1,p}^{\frac{p(q_1+\kappa-1)}{p+\kappa-1}}}{\l[\displaystyle \into\zeta(x)|u|^{1-\kappa}\diff x \r]^{\frac{q_1-p}{p+\kappa-1}}}.
		\end{split}	
	\end{align}
	Applying Theorem 13.17 of Hewitt-Stromberg \cite[p.\,196]{Hewitt-Stromberg-1965} we have 
	\begin{align}\label{prop_42}
		\into\zeta(x)|u|^{1-\kappa}\diff x \leq c_6 \|u\|_{p^*}^{1-\kappa}
	\end{align}
	for some $c_6>0$. Furthermore, we denote by $S$ the best constant of the continuous embedding $\Wp{p}\hookrightarrow \Lp{p^*}$, that is,
	\begin{align}\label{prop_41}
		S\|u\|_{p^*}^p \leq \|u\|_{1,p}^p.
	\end{align}
	From \eqref{prop_40}, \eqref{prop_42} and \eqref{prop_41} it follows that
	\begin{align}\label{prop_41c}
		\begin{split}
		&\tilde{\eta}_u\l(\tilde{t}^\circ_u\r)-\lambda \|u\|_{q_1}^{q_1}\\
		&=\frac{p+\kappa-1}{q_1-p} \l[\frac{q_1-p}{q_1+\kappa-1}\r]^{\frac{q_1+\kappa-1}{p+\kappa-1}}\frac{\| u\|_{1,p}^{\frac{p(q_1+\kappa-1)}{p+\kappa-1}}}{\l[\displaystyle \into\zeta(x)|u|^{1-\kappa}\diff x \r]^{\frac{q_1-p}{p+\kappa-1}}}-\lambda \|u\|_{q_1}^{q_1}\\
		&\geq \frac{p+\kappa-1}{q_1-p} \l[\frac{q_1-p}{q_1+\kappa-1}\r]^{\frac{q_1+\kappa-1}{p+\kappa-1}}\frac{S^{\frac{q_1+\kappa-1}{p+\kappa-1}}\l(\| u\|_{p^*}^p\r)^{\frac{q_1+\kappa-1}{p+\kappa-1}}}{\l(c_6 \|u\|_{p^*}^{1-\kappa}\r)^{\frac{q_1-p}{p+\kappa-1}}}-\lambda c_7\|u\|_{p^*}^{q_1}\\
		&= \Big [c_8-\lambda c_7\Big] \|u\|_{p^*}^{q_1}
		\end{split}
	\end{align}
	for some constants $c_7, c_8>0$. From \eqref{prop_41c} we conclude that there exists  $\tilde{\lambda} \in (0,\hat{\lambda}]$ independent of $u$ such that
	\begin{align}\label{prop_43}
		\tilde{\eta}_u\l(\tilde{t}^\circ_u\r)-\lambda \|u\|_{q_1}^{q_1}>0 \quad\text{for all }\lambda \in \l(0,\tilde{\lambda}\r).
	\end{align}
	
	Now we introduce the function $\eta_u\colon (0,+\infty) \to \R$ defined by
	\begin{align*}
		\eta_u(t)= t^{p-q_1}\|u\|_{1,p}^p+t^{q-q_1}\|\nabla u\|_{q,\mu}^q+t^{p_* -q_1}\| u\|_{p_* ,\beta,\partial\Omega}^{p_* }-t^{-q_1-\kappa+1}\into\zeta(x)|u|^{1-\kappa} \diff x.
	\end{align*}
	Note that the equation 
	\begin{align}\label{prop_7b}
		\begin{split}
			0 = 
			\eta'_u(t)
			&=(p-q_1)t^{p-q_1-1}\|u\|_{1,p}^p+(q-q_1)t^{q-q_1-1}\|\nabla u\|_{q,\mu}^q \\
			&\quad +(p_* -q_1)t^{p_* -q_1-1}\|u\|_{p_* ,\beta,\partial\Omega}^{p_* }  -(-q_1-\kappa+1)t^{-q_1-\kappa}\into\zeta(x)|u|^{1-\kappa}\diff x
		\end{split}
	\end{align}
	is equivalent to
	\begin{align}\label{prop_7c}
		\begin{split}
			& (q_1-p)t^{p+\kappa-1}\|u\|_{1,p}^p
			+(q_1-q)t^{q+\kappa-1}\|\nabla u\|_{q,\mu}^q 
			+(q_1-p_*)t^{p_* +\kappa -1}\|u\|_{p_* ,\beta,\partial\Omega}^{p_* } \\
			& = (q_1+\kappa-1)\into\zeta(x)|u|^{1-\kappa}\diff x.
		\end{split}
	\end{align}
	Since $0<q_1-q<q_1-p<q_1+\kappa-1$, $0<q_1-p_*$, $0 < p + \kappa - 1 < q + \kappa - 1$ and $0<p_* + \kappa - 1$, we can observe that the left-hand side of \eqref{prop_7c}, which we shall denote as $\xi_u (t)$, fulfills
	\begin{equation*}
		\lim_{t \to 0^+} \xi_u (t) = 0, \quad
		\lim_{t \to +\infty} \xi_u (t) = +\infty \quad\text{and}\quad 
		\xi_u' (t) > 0 \text{ for all } t>0.
	\end{equation*}
	From the two limits and via the intermediate value theorem one can derive that there exists $t^\circ_u > 0$ such that \eqref{prop_7c} holds, and from the remaining claim that this value is unique due to the injectivity of $\xi_u (t)$. Furthermore, if we consider $\eta'_u(t) > 0$ ($\eta'_u(t) < 0$), then \eqref{prop_7c} holds with a sign $<$ ($>$), and as $\xi_u (t)$ is strictly increasing this holds for $t<t^\circ_u$ ($t>t^\circ_u$). Hence $\eta_u(t)$ is strictly increasing in $(0,t^\circ_u)$, strictly decreasing in  $(t^\circ_u,\infty)$ and
	\begin{align*}
		\eta_u\l(t^\circ_u\r)=\max_{t>0} \eta_u(t).
	\end{align*}
	It is easy to see that $\eta_u \geq \hat{\eta}_u$. Thus, from \eqref{prop_43} there exists  $\tilde{\lambda} \in (0,\hat{\lambda}]$ independent of $u$ such that
	\begin{align*}
		\eta_u\l(t^\circ_u\r)-\lambda \|u\|_{q_1}^{q_1}>0 \quad\text{for all }\lambda \in \l(0,\tilde{\lambda}\r).
	\end{align*}
	On the other hand, as $0<q_1-q<q_1-p<q_1+\kappa-1$, $0<q_1-p_*$ there holds
	\begin{equation*}
		\lim\limits_{t \to 0^+} \eta_u (t) = - \infty \quad\text{and}\quad
		\lim\limits_{t \to +\infty} \eta_u (t) = 0.
	\end{equation*}
	By the intermediate value theorem and the injectivity of $\eta_u(t)$ in $(0,t^\circ_u)$ and $(t^\circ_u,\infty)$, there exist unique numbers $t^1_u<t^\circ_u<t^2_u$ such that
	\begin{equation}\label{prop_8}
		\eta_u\l(t^1_u\r)=\lambda \|u\|_{q_1}^{q_1}=\eta_u\l(t^2_u\r)
		\quad \text{and}\quad 
		\eta'_u\l(t^2_u\r)<0<\eta'_u\l(t^1_u\r).
	\end{equation}

	Recall that the fibering function $\psi_u\colon[0,+\infty)\to \R$ is given by 
	\begin{align*}
		\psi_u(t)=\Theta_\lambda (tu)\quad\text{for all }t\geq 0.
	\end{align*}
	We have
	\begin{align*}
		\psi'_u\l(t^1_u\r)&=\l(t^1_u\r)^{p-1}\|u\|_{1,p}^p+\l(t^1_u\r)^{q-1}\|\nabla u\|_{q,\mu}^q+\l(t^1_u\r)^{p_* -1}\| u\|_{p_* ,\beta,\partial\Omega}^{p_* }\\ 
		&\quad -\l(t^1_u\r)^{-\kappa}\into\zeta(x)|u|^{1-\kappa}\diff x-\lambda \l(t^1_u\r)^{q_1-1}\|u\|_{q_1}^{q_1}
	\end{align*}
	and
	\begin{align}\label{prop_9}
		\begin{split}
		\psi''_u\l(t^1_u\r)&=(p-1)\l(t^1_u\r)^{p-2}\|u\|_{1,p}^p+(q-1)\l(t^1_u\r)^{q-2}\|\nabla u\|_{q,\mu}^q\\
		&\quad+(p_* -1)\l(t^1_u\r)^{p_* -2}\| u\|_{p_* ,\beta,\partial\Omega}^{p_* } +\kappa \l(t^1_u\r)^{-\kappa-1}\into\zeta(x)|u|^{1-\kappa}\diff x\\
		&\quad -\lambda(q_1-1) \l(t^1_u\r)^{q_1-2}\|u\|_{q_1}^{q_1}.
		\end{split}
	\end{align}

	The first relation in \eqref{prop_8} gives
	\begin{align}\label{prop_9f}
		\begin{split}
		&\l(t^1_u\r)^{p-q_1}\|u\|_{1,p}^p +\l(t^1_u\r)^{q-q_1}\|\nabla u\|_{q,\mu}^q+\l(t^1_u\r)^{p_* -q_1}\| u\|_{p_* ,\beta,\partial\Omega}^{p_* }\\
		&-\l(t^1_u\r)^{-q_1-\kappa+1}\into\zeta(x)|u|^{1-\kappa}\diff x=\lambda \|u\|_{q_1}^{q_1}.
		\end{split}
	\end{align}
	Now we multiply \eqref{prop_9f} with $\kappa \l(t^1_u\r)^{q_1-2}$ and $-(q_1-1)\l(t^1_u\r)^{q_1-2}$, respectively. It follows
	\begin{align}\label{prop_10}
		\begin{split}
		&\kappa \l(t^1_u\r)^{p-2}\|u\|_{1,p}^p +\kappa \l(t^1_u\r)^{q-2}\|\nabla u\|_{q,\mu}^q+\kappa \l(t^1_u\r)^{p_* -2}\|u\|_{p_* ,\beta,\partial\Omega}^{p_* }-\kappa\lambda \l(t^1_u\r)^{q_1-2} \|u\|_{q_1}^{q_1}\\
		&= \kappa \l(t^1_u\r)^{-\kappa-1}\into\zeta(x)|u|^{1-\kappa}\diff x
	\end{split}	
	\end{align}
	and
	\begin{align}\label{prop_11}
		\begin{split}
			&-(q_1-1)\l(t^1_u\r)^{p-2}\|u\|_{1,p}^p -(q_1-1)\l(t^1_u\r)^{q-2}\|\nabla u\|_{q,\mu}^q-(q_1-1)\l(t^1_u\r)^{p_* -2}\|u\|_{p_* ,\beta,\partial\Omega}^{p_* }\\
			&\quad +(q_1-1)\l(t^1_u\r)^{-\kappa-1}\into\zeta(x)|u|^{1-\kappa}\diff x =-\lambda(q_1-1) \l(t^1_u\r)^{q_1-2} \|u\|_{q_1}^{q_1}.
		\end{split}	
	\end{align}
	
	Now we use \eqref{prop_10}  in \eqref{prop_9} which leads to
	\begin{align}\label{prop_12}
		\begin{split}
			\psi''_u\l(t^1_u\r)&=(p+\kappa -1)\l(t^1_u\r)^{p-2}\|u\|_{1,p}^p+(q+\kappa-1)\l(t^1_u\r)^{q-2}\|\nabla u\|_{q,\mu}^q\\ 
			&\quad +(p_* +\kappa-1)\l(t^1_u\r)^{p_* -2}\| u\|_{p_* ,\beta,\partial\Omega}^{p_* }-\lambda(q_1+\kappa -1) \l(t^1_u\r)^{q_1-2}\|u\|_{q_1}^{q_1}\\
			&= \l(t^1_u\r)^{-2}\Big[(p+\kappa -1)\l(t^1_u\r)^{p}\|u\|_{1,p}^p+(q+\kappa-1)\l(t^1_u\r)^{q}\|\nabla u\|_{q,\mu}^q\\ 
			&\qquad\qquad\quad +(p_* +\kappa-1)\l(t^1_u\r)^{p_* }\| u\|_{p_* ,\beta,\partial\Omega}^{p_* }-\lambda(q_1+\kappa -1) \l(t^1_u\r)^{q_1}\|u\|_{q_1}^{q_1}\Big].
		\end{split}
	\end{align}

	On the other hand, applying \eqref{prop_11} in \eqref{prop_9} along with the representation in \eqref{prop_7b} yields
	\begin{equation}\label{prop_13}
		\begin{split}
			\psi''_u\l(t^1_u\r)&=(p-q_1)\l(t^1_u\r)^{p-2}\|u\|_{1,p}^p+(q-q_1)\l(t^1_u\r)^{q-2}\|\nabla u\|_{q,\mu}^q\\
			& \quad +(p_* -q_1)\l(t^1_u\r)^{p_* -2}\| u\|_{p_* ,\beta,\partial\Omega}^{p_* }+(q_1+\kappa-1) \l(t^1_u\r)^{-\kappa-1}\into\zeta(x)|u|^{1-\kappa}\diff x\\
			&=\l(t^1_u\r)^{q_1-1} \eta'_u\l(t^1_u\r)>0,
		\end{split}
	\end{equation}
	see \eqref{prop_7b}. From \eqref{prop_12} and \eqref{prop_13} we conclude that
	\begin{align*}
		\begin{split}
			&(p+\kappa -1)\l(t^1_u\r)^{p}\|u\|_{1,p}^p+(q+\kappa-1)\l(t^1_u\r)^{q}\|\nabla u\|_{q,\mu}^q\\ 
			&\qquad\qquad\quad +(p_* +\kappa-1)\l(t^1_u\r)^{p_* }\| u\|_{p_* ,\beta,\partial\Omega}^{p_* }-\lambda(q_1+\kappa -1) \l(t^1_u\r)^{q_1}\|u\|_{q_1}^{q_1}>0,
		\end{split}
	\end{align*}
	since $t^1_u>0$. This shows that
	\begin{equation*}
		t^1_u u\in \mathcal{N}_\lambda^+ \quad \text{for all } \lambda\in \l(0,\tilde{\lambda}\r].
	\end{equation*}
	Hence, $\mathcal{N}_\lambda^+\neq \emptyset$. The same treatment can be done for the point $t^2_u$ in order to prove that $\mathcal{N}_\lambda^-\neq \emptyset$. 
	
	Now we are going to show the second assertion of the proposition. To this end, let $\{u_n\}_{n\in\N}\subset \mathcal{N}_\lambda^+$ be a minimizing sequence, that is,
	\begin{align}\label{prop_14}
		\Theta_\lambda(u_n) \searrow m^+_\lambda <0 \quad\text{as }n\to\infty.
	\end{align}
	First, we know that $\{u_n\}_{n\in\N}\subset \WH$ is bounded since $\mathcal{N}_\lambda^+ \subset \mathcal{N}_\lambda$ and by applying Proposition \ref{proposition_coerivity}. Hence, we may assume that
	\begin{align}\label{prop_15}
		u_n\weak u_\lambda \quad\text{in }\WH 
		\quad
		u_n\to u_\lambda \quad\text{in }\Lp{q_1}\quad\text{and}\quad u_n\weak u_\lambda\quad\text{in }\Lprand{p_* },
	\end{align}
	see Proposition \ref{proposition_embeddings}\textnormal{(ii)}, \textnormal{(iii)}. From \eqref{prop_14} and \eqref{prop_15} it follows that
	\begin{align*}
		\Theta_\lambda(u_\lambda) \leq \liminf_{n\to+\infty} \Theta_\lambda(u_n)<0=\Theta_\lambda(0).
	\end{align*}
	Therefore, $u_\lambda\neq 0$.
	
	Next, we want to prove that
	\begin{align}\label{prop_15g}
		\lim_{n\to+\infty} \rho(u_n)=\rho(u_\lambda)
	\end{align}
	for a subsequence (still denoted by $u_n$).
	
	{\bf Claim 1:} $\liminf_{n\to+\infty} \|u_n\|_{1,p}^p=\|u_\lambda\|_{1,p}^p$
	
	Let us suppose Claim 1 is not true. Then we have 
	\begin{align*}
		\liminf_{n\to+\infty} \|u_n\|_{1,p}^p>\|u_\lambda\|_{1,p}^p.
	\end{align*}
	Using this and \eqref{prop_8} along with the weak lower semicontinuity of the corresponding norms and seminorms results in
	\begin{align*}
		\begin{split}
			&\liminf_{n\to+\infty} \psi'_{u_n}\l(t^1_{u_\lambda}\r)\\
			&=\liminf_{n\to+\infty}\l[\l(t^1_{u_\lambda}\r)^{p-1}\| u_n\|_{1,p}^p+\l(t^1_{u_\lambda}\r)^{q-1}\|\nabla u_n\|_{q,\mu}^q+\l(t^1_{u_\lambda}\r)^{p_* -1}\|u_n\|_{p_* ,\beta,\partial\Omega}^{p_* } \r.\\
			&\l. \qquad\qquad\quad -\l(t^1_{u_\lambda}\r)^{-\kappa}\into\zeta(x)|u_n|^{1-\kappa}\diff x-\lambda \l(t^1_{u_\lambda}\r)^{q_1-1}\|u_n\|_{q_1}^{q_1}\r]\\
			&>\l(t^1_{u_\lambda}\r)^{p-1}\|u_\lambda\|_{1,p}^p+\l(t^1_{u_\lambda}\r)^{q-1}\|\nabla u_\lambda\|_{q,\mu}^q+\l(t^1_{u_\lambda}\r)^{p_* -1}\|u_\lambda\|_{p_* ,\beta,\partial\Omega}^{p_* } \\
			&\quad -\l(t^1_{u_\lambda}\r)^{-\kappa}\into\zeta(x)|u_\lambda|^{1-\kappa}\diff x-\lambda \l(t^1_{u_\lambda}\r)^{q_1-1}\|u_\lambda\|_{q_1}^{q_1}\\
			&=\psi'_{u_\lambda}\l(t^1_{u_\lambda}\r)=\l(t^1_{u_\lambda}\r)^{q_1-1}\l[\eta_{u_\lambda}\l(t^1_{u_\lambda}\r)-\lambda\|u_\lambda\|_{q_1}^{q_1}\r]=0.
		\end{split}
	\end{align*}
	
	Hence, there exists a number $n_0\in \N$ such that $\psi'_{u_n}(t^1_{u_\lambda})>0$ for all $n>n_0$. We know that $u_n\in \mathcal{N}^+_{\lambda}\subset \mathcal{N}_{\lambda}$ and $\psi'_{u_n}(t)=t^{q_1-1} \l[\eta_{u_n}(t)-\lambda\|u_n\|_{q_1}^{q_1}\r]$. Therefore, we conclude that $\psi'_{u_n}(t)<0$ for all $t\in(0,1)$ and $\psi'_{u_n}(1)=0$ which implies $t^1_{u_\lambda}>1$.
	
	Recall that $\psi_{u_\lambda}$ is decreasing on $(0,t^1_{u_\lambda}]$. This implies 
	\begin{align*}
		\Theta_{\lambda} \l(t^1_{u_\lambda} u_\lambda\r) \leq \Theta_{\lambda}\l(u_\lambda\r)<m^+_{\lambda}.
	\end{align*}
	Since $t^1_{u_\lambda} u_\lambda\in \nc^+_{\lambda}$ we have 
	\begin{align*}
		m^+_{\lambda}\leq \Theta_{\lambda}\l(t^1_{u_\lambda} u_\lambda\r)<m^+_{\lambda},
	\end{align*}
	a contradiction. So Claim 1 is proved.

	From Claim 1 we find a subsequence (still denoted by $u_n$) such that
	\begin{align}\label{conv1}
		 \|u_n\|_{1,p}^p \to  \|u_\lambda\|_{1,p}^p.
	\end{align}
	
	{\bf Claim 2:} $\liminf_{n\to+\infty} \|\nabla u_n\|^q_{q,\mu}=\|\nabla u_\lambda\|_{q,\mu}^q$ for the subsequence in \eqref{conv1}.
	
	As before, let us suppose Claim 2 is not true. So we have 
	\begin{align*}
		\liminf_{n\to+\infty} \|\nabla u_n\|^q_{q,\mu}>\|\nabla u_\lambda\|_{q,\mu}^q.
	\end{align*}
	
	Then we can argue exactly as in the proof of Claim 1. This shows Claim 2.
	
	From Claim  2 we find a subsequence (still denoted by $u_n$) such that
	\begin{align}\label{conv2}
		\|\nabla u_n\|_{q,\mu}^q \to  \|\nabla u_\lambda\|_{q,\mu}^q.
	\end{align}
	
	{\bf Claim 3:} $\liminf_{n\to+\infty} \|u_n\|_{p_* ,\beta,\partial\Omega}^{p_* }=\|u_\lambda\|_{p_* ,\beta,\partial\Omega}^{p_* }$ for the subsequence in \eqref{conv2}.
	
	The proof is the same as in Claims 1 and 2. So we find again a subsequence  (still denoted by $u_n$) such that
	\begin{align}\label{conv3}
		\|u_n\|_{p_* ,\beta,\partial\Omega}^{p_* } \to  \|u_\lambda\|_{p_* ,\beta,\partial\Omega}^{p_* }.
	\end{align}
	
	For the sequence in \eqref{conv3} we know that the convergences in \eqref{conv1} and \eqref{conv2} hold true. So combining \eqref{conv1}--\eqref{conv3} for this sequence we see that \eqref{prop_15g} is satisfied. Since the integrand corresponding to the modular function $\rho(\cdot)$ is uniformly convex, this implies $\rho (\frac{u_n-u_\lambda}{2})\to 0$. Then Proposition \ref{proposition_modular_properties}\textnormal{(v)} implies that $u_n \to u_\lambda$ in $\WH$, see also Fan-Guan \cite[Theorems 3.2 and 3.5]{Fan-Guan-2010}. By the continuity of $\Theta_\lambda$ we have $\Theta_{\lambda}(u_n)\to \Theta_{\lambda}(u_\lambda)$ and thus, $\Theta_{\lambda}(u_\lambda)=m^+_{\lambda}$. We know that $u_n\in \nc^+_{\lambda}$ for all $n\in \N$, that is,
	\begin{align}\label{prop_15j}
		\begin{split}
		&(p+\kappa-1)\|u_n\|_{1,p}^p+(q+\kappa-1)\|\nabla u_n\|_{q,\mu}^q+(p_* +\kappa-1)\| u_n\|_{p_* ,\beta,\partial\Omega}^{p_* }\\
		& -\lambda (q_1+\kappa-1) \|u_n\|_{q_1}^{q_1}>0.
		\end{split}	
	\end{align}
	Passing to the limit in \eqref{prop_15j} as $n\to+\infty$ we obtain
	\begin{equation}\label{prop_16}
		(p+\kappa-1)\|u_\lambda\|_{1,p}^p+(q+\kappa-1)\|\nabla u_\lambda\|_{q,\mu}^q+(p_* +\kappa-1)\| u_\lambda\|_{p_* ,\beta,\partial\Omega}^{p_* }-\lambda (q_1+\kappa-1) \|u_\lambda\|_{q_1}^{q_1}\geq 0.
	\end{equation}
	Since $\lambda \in (0,\tilde{\lambda})$ and $\tilde{\lambda}\leq \hat{\lambda}$, we conclude from Proposition \ref{proposition_emptiness} that we have a strict inequality in \eqref{prop_16}. Hence, $u_\lambda\in \mathcal{N}^+_{\lambda}$. Note that we can always use $|u_\lambda|$ instead of $u_\lambda$, so we may assume that $u_\lambda(x)\geq 0$ for a.\,a.\,$x\in\Omega$ such that $u_\lambda\neq 0$.
\end{proof}

The next proposition plays a key role in order to prove that $u_\lambda$ is a weak solution of problem \eqref{problem}.

\begin{proposition}\label{proposition_energy_estimate}
		If hypotheses \textnormal{(H)}  hold, $h\in\WH$ and $\lambda \in(0, \tilde{\lambda}]$, then we can find $\delta>0$ such that $\Theta_{\lambda}(u_\lambda)\leq \Theta_{\lambda}(u_\lambda+th)$ for all $t\in [0,\delta]$.
\end{proposition}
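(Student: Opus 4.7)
The plan is to exploit the minimality of $u_\lambda$ on $\mathcal{N}_\lambda^+$: for each small $t \geq 0$, I would construct a scalar $s(t)$ close to $1$ such that $s(t)(u_\lambda + th) \in \mathcal{N}_\lambda^+$ and then compare $\psi_{u_\lambda + th}(1) = \Theta_\lambda(u_\lambda + th)$ with $\psi_{u_\lambda + th}(s(t)) = \Theta_\lambda(s(t)(u_\lambda + th))$ via the geometry of the fibering map.

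Concretely, set $G(t,s) := \psi'_{u_\lambda + th}(s)$ for $(t,s)$ in a neighborhood of $(0,1)$. Since $u_\lambda \in \mathcal{N}_\lambda^+$ we have $G(0,1) = \psi'_{u_\lambda}(1) = 0$ and $\partial_s G(0,1) = \psi''_{u_\lambda}(1) > 0$. The map $G$ is smooth in $s > 0$, and continuous in $t$: for the polynomial and boundary contributions this is clear, while for the singular contribution $t \mapsto \into \zeta(x) |u_\lambda + th|^{1-\kappa} \diff x$ it follows from dominated convergence with the domination $|u_\lambda + th|^{1-\kappa} \leq (|u_\lambda| + |h|)^{1-\kappa} \in L^1(\Omega)$ for $t \in [0,1]$, integrable because $\zeta \in L^\infty(\Omega)$ and $|u_\lambda| + |h| \in L^{p^*}(\Omega)$ by Proposition \ref{proposition_embeddings}\textnormal{(ii)}.

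By continuity of $\partial_s G$, one finds $\eta > 0$ and $\delta_1 > 0$ with $\partial_s G(t,s) > 0$ on $[0,\delta_1] \times [1-\eta, 1+\eta]$; in particular $G(t,\cdot)$ is strictly increasing there. Combined with $G(0, 1-\eta) < 0 < G(0, 1+\eta)$ and the continuity of $G$ in $t$, the intermediate value theorem yields, after shrinking $\delta_1$ if needed, a unique continuous function $s\colon [0,\delta_1] \to (1-\eta, 1+\eta)$ with $s(0) = 1$ and $G(t, s(t)) = 0$, i.e.\ $s(t)(u_\lambda + th) \in \mathcal{N}_\lambda$. The positivity $\psi''_{u_\lambda + th}(s(t)) = \partial_s G(t, s(t)) > 0$ upgrades this to $s(t)(u_\lambda + th) \in \mathcal{N}_\lambda^+$. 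Hence by the minimality $\Theta_\lambda(u_\lambda) = m_\lambda^+$,
\[
\Theta_\lambda(u_\lambda) \leq \Theta_\lambda(s(t)(u_\lambda + th)) = \psi_{u_\lambda + th}(s(t)).
\]
Moreover, as $\psi''_{u_\lambda + th} > 0$ throughout $[1-\eta, 1+\eta]$, the fibering map $\psi_{u_\lambda + th}$ is strictly convex there, so its unique critical point $s(t)$ is the minimizer on that interval; since $1 \in (1-\eta, 1+\eta)$ this gives $\psi_{u_\lambda + th}(s(t)) \leq \psi_{u_\lambda + th}(1) = \Theta_\lambda(u_\lambda + th)$, and combining the two inequalities proves the claim with $\delta := \delta_1$.

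The main obstacle is that $\Theta_\lambda$ fails to be $C^1$ in $t$ because of the singular term $-\frac{1}{1-\kappa}\into \zeta(x) |u_\lambda + th|^{1-\kappa} \diff x$, which rules out the classical $C^1$ implicit function theorem. The resolution is to work in the $s$-direction, where $G$ is polynomial and therefore smooth, and use only continuity in $t$, furnished by dominated convergence. This hand-built implicit function theorem, made possible by the strict positivity $\partial_s G(0,1) > 0$ inherited from the defining property of $\mathcal{N}_\lambda^+$, is the heart of the argument.
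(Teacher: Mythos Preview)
Your proof is correct and follows the same overall strategy as the paper: produce a scaling $s(t)\approx 1$ with $s(t)(u_\lambda+th)\in\mathcal{N}_\lambda^+$, invoke the minimality $\Theta_\lambda(u_\lambda)\le\Theta_\lambda(s(t)(u_\lambda+th))$, and then use the geometry of the fibering map to pass from $\psi_{u_\lambda+th}(s(t))$ to $\psi_{u_\lambda+th}(1)$.

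The execution differs in two places. First, the paper applies an abstract implicit function theorem (Berger) to the map $(y,t)\mapsto t^{\kappa}\psi'_{u+y}(t)$ on $\WH\times(0,\infty)$, whereas you restrict to the ray $y=th$ and construct $s(t)$ by hand from monotonicity of $s\mapsto\psi'_{u_\lambda+th}(s)$ and the intermediate value theorem, needing only continuity in $t$; this is more elementary and makes explicit why the singular term causes no trouble (it is independent of $s$ and merely continuous in $t$ by dominated convergence). Second, for the final inequality $\psi_{u_\lambda+th}(s(t))\le\psi_{u_\lambda+th}(1)$, the paper appeals to the global shape of the fibering map (unique inflection point $t^*_u$, so $\psi_u''(1)>0$ forces $1$ to lie in the convex region where $t^1_u$ is the minimum), while you use only \emph{local} strict convexity of $\psi_{u_\lambda+th}$ on $[1-\eta,1+\eta]$; your argument is shorter and sidesteps the need to verify that $\psi_u''$ has a unique zero. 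Both routes reach the same conclusion, and yours is arguably the cleaner of the two.
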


\begin{proof}
	Taking $u\in\mathcal{N}_\lambda^+$, we introduce the function $\ph\colon\WH\times (0,\infty)\to\R$ defined by
	\begin{align*}
		\ph(y,t)
		&=t^{p+\kappa-1}\|u+y\|_{1,p}^p+t^{q+\kappa-1}\|\nabla (u+y)\|_{q,\mu}^q+t^{p_* +\kappa-1}\| u+y\|_{p_* ,\beta,\partial\Omega}^{p_* }\\
		& \quad -\into\zeta(x)|u+y|^{1-\kappa}\diff x
		-\lambda t^{q_1+\kappa-1} \|u+y\|_{q_1}^{q_1}\quad \text{for all } y\in \WH.
	\end{align*}
	Due to $u\in \mathcal{N}^+_\lambda \subset \mathcal{N}_\lambda$, we easily see that $\ph(0,1)=0$. Moreover, since $u \in \mathcal{N}^+_\lambda$, we obtain
	\begin{align*}
		\ph'_t(0,1)&=(p+\kappa-1)\|u\|_{1,p}^p+(q+\kappa-1)\|\nabla u\|_{q,\mu}^q+(p_* +\kappa-1)\|u\|_{p_* ,\beta,\partial\Omega}^{p_* } \\
		&\quad -\lambda (q_1+\kappa-1)\|u\|_{q_1}^{q_1}>0.
	\end{align*}
	Now we can apply the implicit function theorem, see, for example, Berger \cite[p.\,115]{Berger-1977}, which ensures the existence of $\eps>0$ and a continuous function $\chi\colon B_\eps(0)\to (0,\infty)$ such that
	\begin{align*}
		\chi(0)=1\quad\text{and}\quad \chi(y)(u+y) \in \mathcal{N}_\lambda \quad\text{for all } y\in B_\eps(0),
	\end{align*}
	where $B_\eps(0)=\l\{u\in\WH\,:\, \|u\|<\eps\r\}$. Choosing $\eps>0$ small enough, we have 
	\begin{align}\label{prop_17g}
		\chi(0)=1\quad\text{and}\quad \chi(y)(u+y) \in \mathcal{N}^{+}_\lambda \quad\text{for all }y\in B_\eps(0).
	\end{align}

	We define now the function $\Xi_h\colon [0,+\infty)\to \R$ given by
	\begin{align}\label{prop_17}
		\begin{split}
			\Xi_h(t)
			&=(p-1)\l \|u_\lambda+th\r\|_{1,p}^p+(q-1)\|\nabla u_\lambda+t\nabla h\|_{q,\mu}^q+(p_* -1)\| u_\lambda+t h\|_{p_* ,\beta,\partial\Omega}^{p_* }\\
			& \quad+\kappa \into\zeta(x)\l|u_\lambda+th \r|^{1-\kappa}\diff x-\lambda (q_1-1)\l\| u_\lambda+th \r\|_{q_1}^{q_1}.
		\end{split}
	\end{align}
	We know that $u_\lambda \in \mathcal{N}_\lambda$ and $u_\lambda \in \mathcal{N}_\lambda^+$. This gives
	\begin{align}\label{prop_18}
		\kappa \into\zeta(x)\l| u_\lambda\r|^{1-\kappa} \diff x=\kappa \l\|  u_\lambda\r\|_{1,p}^p+\kappa \l\|\nabla u_\lambda\r\|_{q,\mu}^q+\kappa \l\| u_\lambda\r\|_{p_* ,\beta,\partial\Omega}^{p_* }-\lambda \kappa \l\| u_\lambda\r\|_{q_1}^{q_1}
	\end{align}
	and
	\begin{align}\label{prop_19}
		\begin{split}
		&(p+\kappa-1)\l\| u_\lambda\r\|_{1,p}^p+(q+\kappa-1) \l\|\nabla u_\lambda\r\|_{q,\mu}^q+(p_* +\kappa-1) \l\| u_\lambda\r\|_{p_* ,\beta,\partial\Omega}^{p_* }\\
		& -\lambda(q_1+\kappa-1) \l\| u_\lambda\r\|_{q_1}^{q_1}>0.
		\end{split}	
	\end{align}
	Using  \eqref{prop_18} and \eqref{prop_19} in \eqref{prop_17} we infer that $\Xi_h(0)>0$ and since $\Xi_h\colon [0,+\infty)\to \R$ is continuous there exists a number $\delta_0>0$ such that
	\begin{align*}
		\Xi_h(t)>0 \quad\text{for all }t \in [0,\delta_0].
	\end{align*}
	From the first part of the proof, see \eqref{prop_17g}, we know that for every $t \in [0,\delta_0]$ there exists $\chi(t)>0$ such that
	\begin{align*}
		\chi(t)\l(u_\lambda+th\r)\in \mathcal{N}_\lambda^+
		\quad\text{and}\quad
		\chi(t) \to 1 \quad\text{as } t\to 0^+.
	\end{align*}
	From Proposition \ref{proposition_nonemptiness_and_existence} it follows that
	\begin{align}\label{prop_20}
		m_\lambda^+=\Theta_\lambda \l(u_\lambda\r) \leq \Theta_\lambda \l(\chi(t)\l(u_\lambda+th\r)\r)\quad\text{for all } t \in [0,\delta_0].
	\end{align}

	Let $u \in \WH$ be arbitrary and recall that $\psi_u(t)=\Theta_\lambda (tu)$ for all $t\geq 0$. Note that for $t>0$, we have $t \psi_u'(t)=0$ if and only if $tu \in \mathcal{N}_\lambda$, that is, $t=t^1_u$ or $t=t^2_u$ by \eqref{prop_8}. From \eqref{prop_13} we also know that $\psi_u''(t^1_u)>0>\psi_u''(t^2_u)$. On the other hand, there exists a unique $t^*_u>0$ such that $\psi_u''(t^*_u)=0$. Hence, $t^1_u<t^*_u<t^2_u$. Altogether, if $\psi_u''(t)>0$, then $t\in (0,t^*_u) \subseteq (0, t^2_u)$ and $\psi_u(t)\geq \psi_u(t^1_u)$.
	
	\begin{center}
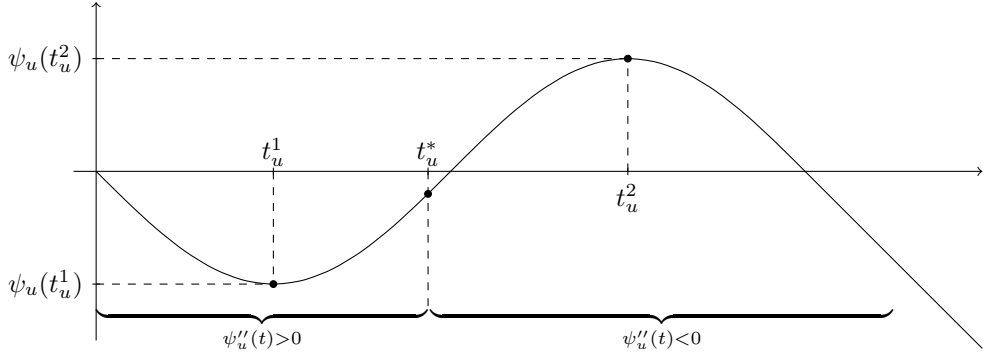

		\begin{tikzpicture}[scale=1.5]
			\draw[->] (-0.2,1) -- (2.5*pi,1);
			\draw[->] (0,-0.5) -- (0,2.5);
			\draw[domain=0:2*pi,smooth,variable=\x] plot 
			({\x},{-2*sin(\x r +0 r)/2+1});
			\draw[domain=2*pi:2.5*pi,smooth,variable=\x] plot 
			({\x},{-\x+2*pi+1});
			\draw (pi/2,\xticklength/1.5+1) -- (pi/2,-\xticklength/1.5+1) node[above] {$t^1_u$};
			\draw[dashed] (pi/2,1) -- (pi/2,0);
			\draw[fill=black] (pi/2,0) circle (0.03cm);
			\draw (3*pi/2,\xticklength/1.5+1) -- (3*pi/2,-\xticklength/1.5+1) node[below] {$t^2_u$};
			\draw[dashed] (3*pi/2,1) -- (3*pi/2,2);
			\draw[fill=black] (3*pi/2,2) circle (0.03cm);
			\draw (\xticklength/1.5,2) -- (-\xticklength/1.5,2) node[left] {$\psi_u(t^2_u)$};
			\draw[dashed] (0,2) -- (3*pi/2,2);
			\draw (\xticklength/1.5,0) -- (-\xticklength/1.5,0) node[left] {$\psi_u(t^1_u)$};
			\draw[dashed] (0,0) -- (pi/2,0);
			\draw (pi-0.2,\xticklength/1.5+1) -- (pi-0.2,-\xticklength/1.5+1) node[above] {$t^*_u$};
			\draw[dashed] (pi-0.2,1) -- (pi-0.2,-0.2);
			\draw[fill=black] (pi-0.2,0.8) circle (0.03cm);
			\node at (pi/2-0.1,-0.4) {$\underbrace{\hspace{4.4cm}}_{\psi_u''(t)>0}$};
			\node at (3*pi/2+0.3,-0.4) {$\underbrace{\hspace{6.15cm}}_{\psi_u''(t)<0}$};
		\end{tikzpicture}
		\captionof{figure}{The graph of $\psi_u(t)$.}\label{figure_1}
	\end{center}

	Now, from $\psi_{u_\lambda}''(1)>0$ and the continuity in $t$, we have $\psi''_{u_\lambda +th}(1)>0$ for $t \in [0,\delta]$ with $\delta\in (0,\delta_0]$. Therefore, using \eqref{prop_20}
	\begin{align*}
		m_\lambda^+
		=\Theta_\lambda \l(u_\lambda\r) 
		\leq \Theta_\lambda \l(\chi(t)\l(u_\lambda+th\r)\r)
		=\psi_{u_\lambda+th}(\chi(t))
		\leq \psi_{u_\lambda +th}(1)
		=\Theta_\lambda \l(u_\lambda+th\r)
	\end{align*}
	for all $t \in [0,\delta]$.
\end{proof}

Now we are ready to show that $u_\lambda$ is indeed a weak solution for problem \eqref{problem}.

\begin{proposition}\label{proposition_first_weak_solution}
	If hypotheses \textnormal{(H)}  hold and  $\lambda \in(0, \tilde{\lambda})$, then $u_\lambda$ is a weak solution of problem \eqref{problem} such that $\Theta_\lambda(u_\lambda)<0$.
\end{proposition}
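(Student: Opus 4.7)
The plan proceeds in three steps: derive a one-sided variational inequality for nonnegative test functions from Proposition~\ref{proposition_energy_estimate}, use it to deduce both integrability of the singular term and strict positivity of $u_\lambda$, and finally upgrade the inequality to an equality through the test function $(u_\lambda+\eps\phi)^+$. The bound $\Theta_\lambda(u_\lambda)=m_\lambda^+<0$ is immediate from Proposition~\ref{proposition_negative_energy}.

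\textbf{Step 1.} Fix $h\in\WH$ with $h\ge 0$. By Proposition~\ref{proposition_energy_estimate} there is $\delta>0$ with $\Theta_\lambda(u_\lambda+th)\ge\Theta_\lambda(u_\lambda)$ for $t\in[0,\delta]$, hence
\[
0\le \liminf_{t\to 0^+}\frac{\Theta_\lambda(u_\lambda+th)-\Theta_\lambda(u_\lambda)}{t}.
\]
All non-singular terms of $\Theta_\lambda$ are $C^1$ on $\WH$ along the direction $h$, so their contribution passes to the Gateaux derivative. For the singular term, $u_\lambda\ge 0$ and $h\ge 0$ imply $u_\lambda+th\ge u_\lambda\ge 0$, and the difference quotient $(1-\kappa)^{-1}t^{-1}\bigl[(u_\lambda+th)^{1-\kappa}-u_\lambda^{1-\kappa}\bigr]$ is nonnegative and converges a.e.\ to $u_\lambda^{-\kappa}h$ (with value $+\infty$ on $\{u_\lambda=0,\,h>0\}$). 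Fatou's lemma then yields
\[
\liminf_{t\to 0^+}\frac{1}{t(1-\kappa)}\into\zeta(x)\bigl[(u_\lambda+th)^{1-\kappa}-u_\lambda^{1-\kappa}\bigr]\diff x\ge\into\zeta(x)u_\lambda^{-\kappa}h\diff x.
\]
Combining these gives, for every $h\in\WH$ with $h\ge 0$,
\begin{align*}
&\into\bigl(|\nabla u_\lambda|^{p-2}\nabla u_\lambda+\mu(x)|\nabla u_\lambda|^{q-2}\nabla u_\lambda\bigr)\cdot\nabla h\diff x+\into\alpha(x)u_\lambda^{p-1}h\diff x\\
&\quad+\int_{\partial\Omega}\beta(x)u_\lambda^{p_*-1}h\diff\sigma-\lambda\into u_\lambda^{q_1-1}h\diff x\ge\into\zeta(x)u_\lambda^{-\kappa}h\diff x. \tag{$\star$}
\end{align*}

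\textbf{Step 2.} Testing $(\star)$ with the constant function $h\equiv 1\in\WH$ forces $\into\zeta u_\lambda^{-\kappa}\diff x<+\infty$. Since $\zeta(x)>0$ a.e.\ by (H)(v), this implies $u_\lambda(x)>0$ for a.a.\ $x\in\Omega$. Writing a general $h\in\WH$ as $h=h^+-h^-$ with $h^\pm\in\WH$ nonnegative, and applying $(\star)$ separately to $h^+$ and $h^-$, one obtains $\zeta u_\lambda^{-\kappa}h\in L^1(\Omega)$ for every $h\in\WH$.

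\textbf{Step 3.} To remove the sign restriction on $h$, I take $\phi\in\WH$ arbitrary and use $h_\eps:=(u_\lambda+\eps\phi)^+\in\WH$, $\eps>0$, as test function in $(\star)$. Decomposing the resulting integrals over $\{u_\lambda+\eps\phi>0\}$ (where $h_\eps=u_\lambda+\eps\phi$) and $\{u_\lambda+\eps\phi\le 0\}$ (where $h_\eps=0$), and using that $u_\lambda\in\mathcal{N}_\lambda$ furnishes the identity $\langle A(u_\lambda),u_\lambda\rangle_{\mathcal H}+\into\alpha u_\lambda^p\diff x+\int_{\partial\Omega}\beta u_\lambda^{p_*}\diff\sigma=\into\zeta u_\lambda^{1-\kappa}\diff x+\lambda\into u_\lambda^{q_1}\diff x$, I rearrange and divide by $\eps$. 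Since $u_\lambda>0$ a.e.\ in $\Omega$, the exceptional set $\{u_\lambda+\eps\phi\le 0\}$ shrinks to a null set as $\eps\to 0^+$, so by absolute continuity of the Lebesgue integral the contributions over that set vanish in the limit, yielding
\[
\langle A(u_\lambda),\phi\rangle_{\mathcal H}-\lambda\into u_\lambda^{q_1-1}\phi\diff x\ge\into\zeta u_\lambda^{-\kappa}\phi\diff x.
\]
Replacing $\phi$ by $-\phi$ yields the reverse inequality, hence equality for all $\phi\in\WH$. This is precisely the weak formulation \eqref{weak_solution}.

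\textbf{Main obstacle.} The delicate point is Step~3: controlling the boundary integral and the $p_*$-critical term on $\{u_\lambda+\eps\phi\le 0\}$ as $\eps\to 0^+$, and justifying the passage to the limit for the singular term $\int\zeta u_\lambda^{-\kappa}h_\eps\diff x$ after division by $\eps$. Here the key is the positivity from Step~2 (so that $|\{u_\lambda+\eps\phi\le 0\}|\to 0$) combined with the domination $\zeta u_\lambda^{-\kappa}|\phi|\in L^1(\Omega)$ established at the end of Step~2, which allows dominated/absolute-continuity arguments.
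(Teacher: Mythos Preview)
Your proposal is correct and follows the same overall route as the paper: a one-sided inequality for nonnegative test functions obtained via Proposition~\ref{proposition_energy_estimate} and Fatou's lemma, followed by the standard $(u_\lambda+\eps\phi)^+$ test-function argument to upgrade to equality. Step~3 is essentially identical to the paper's computation.

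The one genuine difference is the positivity step. The paper proves $u_\lambda>0$ a.e.\ \emph{before} deriving the one-sided inequality, by a separate contradiction: assuming $u_\lambda=0$ on a set $B$ of positive measure, it isolates the term $-\frac{1}{(1-\kappa)t^\kappa}\int_B\zeta h^{1-\kappa}\diff x$ in the difference quotient and shows it forces the quotient to $-\infty$. You instead reverse the order: you establish $(\star)$ first (which is fine, since Fatou applies with the convention $u_\lambda^{-\kappa}h=+\infty$ on $\{u_\lambda=0,\,h>0\}$), and then simply test $(\star)$ with $h\equiv 1$ to get $\int_\Omega\zeta u_\lambda^{-\kappa}\diff x<\infty$, from which $u_\lambda>0$ a.e.\ follows immediately because $\zeta>0$ a.e. This is a neat economy---one argument does double duty for integrability and positivity---and it avoids the paper's separate blow-up computation. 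Both approaches are valid; yours is slightly shorter. A minor remark on Step~3: the singular contribution over $\{u_\lambda+\eps\phi\le 0\}$ is actually disposed of by sign (it has the favourable sign in the inequality), not by absolute continuity, so the domination $\zeta u_\lambda^{-\kappa}|\phi|\in L^1$ is not needed there; absolute continuity is only required for the gradient, $\alpha$, and $\beta$ terms, exactly as in the paper.
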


\begin{proof}
	First, we mention that $u_\lambda\geq 0$ for a.\,a.\,$x\in\Omega$ and $\Theta_{\lambda}(u_\lambda)<0$, see Proposition \ref{proposition_nonemptiness_and_existence}.
	
	{\bf Claim:} $u_\lambda> 0$ for a.\,a.\,$x\in\Omega$
	
	Arguing indirectly and suppose there exists a set $B$ with positive measure such that $u_\lambda=0$ in $B$. Let $h\in \WH$ with $h > 0$ and let $t\in (0,\delta)$ (see Proposition \ref{proposition_energy_estimate}), then $(u_\lambda+th)^{1-\kappa}>u_\lambda^{1-\kappa}$ a.\,e.\,in $\Omega\setminus B$. Applying Proposition \ref{proposition_energy_estimate} yields
	\begin{align*}
		0
		&\leq \frac{\Theta_\lambda(u_\lambda+th)-\Theta_\lambda(u_\lambda)}{t} \\
		&=\frac{1}{p} \frac{\|u_\lambda+th\|_{1,p}^p-\|u_\lambda\|_{1,p}^p}{t} +\frac{1}{q} \frac{\|\nabla (u_\lambda+th)\|_{q,\mu}^q-\|\nabla u_\lambda\|_{q,\mu}^q}{t}\\
		& \quad +\frac{1}{p_* } \frac{\|u_\lambda+th\|_{p_* ,\beta,\partial\Omega}^{p_* }-\|u_\lambda\|_{p_* ,\beta,\partial\Omega}^{p_* }}{t}-\frac{1}{(1-\kappa)t^{\kappa}} \int_B \zeta(x)h^{1-\kappa} \diff x\\
		&\quad - \frac{1}{1-\kappa}\int_{\Omega\setminus B} \zeta(x)\frac{(u_\lambda+th)^{1-\kappa}-(u_\lambda)^{1-\kappa}}{t}\diff x\
		-\frac{\lambda}{q_1}
		\frac{\|u_\lambda+th\|_{q_1}^{q_1}-\|u_\lambda\|_{q_1}^{q_1}}{t}\\
		&<\frac{1}{p} \frac{\|u_\lambda+th\|_{1,p}^p-\|u_\lambda\|_{1,p}^p}{t} +\frac{1}{q} \frac{\|\nabla (u_\lambda+th)\|_{q,\mu}^q-\|\nabla u_\lambda\|_{q,\mu}^q}{t}\\
		&\quad +\frac{1}{p_* } \frac{\|u_\lambda+th\|_{p_* ,\beta,\partial\Omega}^{p_* }-\|u_\lambda\|_{p_* ,\beta,\partial\Omega}^{p_* }}{t}
		-\frac{1}{(1-\kappa)t^{\kappa}} \int_B \zeta(x)h^{1-\kappa} \diff x\\
		&\quad -\frac{\lambda}{q_1}
		 \frac{\|u_\lambda+th\|_{q_1}^{q_1}-\|u_\lambda\|_{q_1}^{q_1}}{t}.
	\end{align*}
	From this we conclude by hypothesis \textnormal{(H)(v)} that
	\begin{align*}
		0
		&\leq \frac{\Theta_\lambda(u_\lambda+th)-\Theta_\lambda(u_\lambda)}{t} \to -\infty \quad \text{as }t\to 0^+,
	\end{align*}
	which is a contradiction. This shows the Claim and so $u_\lambda>0$ a.\,e.\,in $\Omega$.
	
	Let us now show that
	\begin{equation}\label{def1}
		\zeta(\cdot)(u_\lambda)^{-\kappa} h\in \Lp{1}  \quad \text{for all } h\in\WH
	\end{equation}
	and
	\begin{align}\label{def2}
		\begin{split}
			& \into \Big(|\nabla u_\lambda|^{p-2} \nabla u_\lambda+ \mu(x) |\nabla u_\lambda|^{q-2} \nabla u_\lambda\Big) \cdot \nabla h \diff x\\
			& \quad +\into \alpha(x) (u_\lambda)^{p-1}h\diff x+\int_{\partial\Omega}\beta(x)(u_\lambda)^{p_* -1} h\diff \sigma \\
			&\geq \into \zeta(x)(u_\lambda)^{-\kappa} h \diff x+\lambda \into (u_\lambda )^{q_1-1}h\diff x
		\end{split}
	\end{align}
	for all $h\in \WH$ with $h \geq 0$.
	
	Taking $h\in \WH$ with $h\geq 0$ and choosing a decreasing sequence $\{t_n\}_{n \in\N} \subseteq (0,1]$ such that $\displaystyle \lim_{n\to \infty} t_n=0$, we see that for $n \in\N$, the functions
	\begin{align*}
		\omega_n(x)=\zeta(x)\frac{(u_\lambda(x)+t_nh(x))^{1-\kappa}-u_\lambda(x)^{1-\kappa}}{t_n}
	\end{align*}
	are measurable, nonnegative  and we have
	\begin{align*}
		\lim_{n\to \infty} \omega_n(x)=(1-\kappa) \zeta(x)u_\lambda(x)^{-\kappa}h(x)\quad \text{for a.\,a.\,} x\in\Omega.
	\end{align*}
	From Fatou's lemma we obtain
	\begin{equation}\label{fatou}
		\into \zeta(x) \l (u_\lambda\r)^{-\kappa}h\diff x\leq \frac{1}{1-\kappa}\liminf_{n\to\infty}\into \omega_n\diff x.
	\end{equation}
	Applying again Proposition \ref{proposition_energy_estimate} we get for $n\in\N$ sufficiently large that
	\begin{align*}
		0
		&\leq \frac{\Theta_\lambda(u_\lambda+t_nh)-\Theta_\lambda(u_\lambda)}{t_n} \\
		&=\frac{1}{p} \frac{\|u_\lambda+t_nh\|_{1,p}^p-\|u_\lambda\|_{1,p}^p}{t_n} +\frac{1}{q} \frac{\|\nabla (u_\lambda+t_nh)\|_{q,\mu}^q-\|\nabla u_\lambda\|_{q,\mu}^q}{t_n}\\
		&\quad +\frac{1}{p_* } \frac{\|u_\lambda+t_nh\|_{p_* ,\beta,\partial\Omega}^{p_* }-\|u_\lambda\|_{p_* ,\beta,\partial\Omega}^{p_* }}{t_n}- \frac{1}{1-\kappa}\into \omega_n\diff x-\frac{\lambda}{q_1}
		\frac{\|u_\lambda+t_nh\|_{q_1}^{q_1}-\|u_\lambda\|_{q_1}^{q_1}}{t_n}.
	\end{align*}
	Passing to the limit as $n\to \infty$ and applying \eqref{fatou} we obtain \eqref{def1} and 
	\begin{align*}
		\begin{split}
			& \into \zeta(x)(u_\lambda)^{-\kappa} h \diff x\\
			&\leq \into \Big(|\nabla u_\lambda|^{p-2} \nabla u_\lambda+ \mu(x) |\nabla u_\lambda|^{q-2} \nabla u_\lambda\Big) \cdot \nabla h \diff x\\
			& \quad +\into \alpha(x) (u_\lambda)^{p-1}h\diff x+\int_{\partial\Omega}\beta(x)(u_\lambda)^{p_* -1} h\diff \sigma 
			+\lambda \into (u_\lambda )^{q_1-1}h\diff x.
		\end{split}
	\end{align*}
	This shows \eqref{def2}. Note that it is sufficient to prove the integrability in \eqref{def1} for nonnegative test functions $h\in \WH$.
	
	Now, we can prove that $u_\lambda$ is a weak solution of \eqref{problem}. Fur this purpose, let $v\in\WH$ and $\eps>0$. Taking $h=(u_\lambda+\eps v)_+$ as test function in \eqref{def2} and using the fact that $u_\lambda\in \mathcal{N}_\lambda^+\subset \mathcal{N}_\lambda$ such that $u_\lambda\geq 0$, we obtain
	\begin{align*}
			0& \le \into \ykh{\l|\nabla u_\lambda\r|^{p-2}\nabla u_\lambda + \mu(x)\l|\nabla u_\lambda\r|^{q-2}\nabla u_\lambda} \cdot \nabla (u_\lambda+\eps v)_+ \diff x\\
			& \quad +\into \alpha(x)\l(u_\lambda\r)^{p-1}(u_\lambda+\eps v)_+\diff x+\int_{\partial\Omega}\beta(x)\l(u_\lambda\r)^{p_* -1}(u_\lambda+\eps v)_+\diff \sigma\\
			&\quad -\into \l(\zeta(x)\l(u_\lambda\r)^{-\kappa}+\lambda \l(u_\lambda\r)^{q_1-1}\r)(u_\lambda+\eps v)_+\diff x\\
			&=\into\l(\l|\nabla u_\lambda\r|^{p-2}\nabla u_\lambda + \mu(x)\l|\nabla u_\lambda\r|^{q-2}\nabla u_\lambda\r) \cdot \nabla \l (u_\lambda+\eps v\r) \diff x\\
			&\quad - \int_{\l\{ u_\lambda+\eps v< 0\r\}}\l(\l|\nabla u_\lambda\r|^{p-2}\nabla u_\lambda + \mu(x)\l|\nabla u_\lambda\r|^{q-2}\nabla u_\lambda\r) \cdot \nabla \l( u_\lambda+\eps v \r) \diff x\\
			& \quad +\into \alpha(x) \l(u_\lambda\r)^{p-1} \l (u_\lambda+\eps v\r) \diff x-\int_{\l\{ u_\lambda+\eps v< 0\r\}} \alpha(x) \l(u_\lambda\r)^{p-1} \l (u_\lambda+\eps v\r) \diff x\\
			& \quad +\int_{\partial\Omega} \beta(x) \l(u_\lambda\r)^{p_* -1} \l (u_\lambda+\eps v\r) \diff \sigma-\int_{\l\{ u_\lambda+\eps v< 0\r\}} \beta(x) \l(u_\lambda\r)^{p_* -1} \l (u_\lambda+\eps v\r) \diff \sigma\\
			& \quad -\into\l(\zeta(x)\l(u_\lambda\r)^{-\kappa}+\lambda \l(u_\lambda\r)^{q_1-1}\r) \l( u_\lambda+\eps v\r)\diff x\\
			&\quad + \int_{\{ u_\lambda+\e v< 0\}} \l(\zeta(x)\l(u_\lambda\r)^{-\kappa}+\lambda \l(u_\lambda\r)^{q_1-1}\r)\l( u_\lambda+\eps v \r)\diff x\\
			&=\|u_\lambda\|_{1,p}^p+\|\nabla u_\lambda\|_{q,\mu}^q+\| u_\lambda\|_{p_* ,\beta,\partial\Omega}^{p_* }-\into\zeta(x)|u_\lambda|^{1-\kappa}\diff x-\lambda \|u_\lambda\|_{q_1}^{q_1}\\
			&\quad +\eps \into \l ( \l|\nabla u_\lambda\r|^{p-2}\nabla u_\lambda + \mu(x)\l|\nabla u_\lambda\r|^{q-2}\nabla u_\lambda\r) \cdot \nabla  v \diff x\\
			&\quad +\eps \into \alpha(x) \l(u_\lambda\r)^{p-1} v \diff x+\eps \int_{\partial\Omega} \beta(x) \l(u_\lambda\r)^{p_* -1} v  \diff \sigma\\
			&\quad -\eps \into  \l( \zeta(x)\l(u_\lambda\r)^{-\kappa}+\lambda \l(u_\lambda\r)^{q_1-1}\r) v \diff x\\
			&\quad - \int_{\l\{ u_\lambda+\eps v< 0\r\}}\l(\l|\nabla u_\lambda\r|^{p-2}\nabla u_\lambda + \mu(x)\l|\nabla u_\lambda\r|^{q-2}\nabla u_\lambda\r) \cdot \nabla \l( u_\lambda+\eps v \r) \diff x\\
			&\quad -\int_{\l\{ u_\lambda+\eps v< 0\r\}} \alpha(x) \l(u_\lambda\r)^{p-1} \l (u_\lambda+\eps v\r) \diff x-\int_{\l\{ u_\lambda+\eps v< 0\r\}} \beta(x) \l(u_\lambda\r)^{p_* -1} \l (u_\lambda+\eps v\r) \diff \sigma\\
			&\quad +\int_{\{ u_\lambda+\e v< 0\}} \l(\zeta(x)\l(u_\lambda\r)^{-\kappa}+\lambda \l(u_\lambda\r)^{q_1-1}\r)\l( u_\lambda+\eps v \r)\diff x\\
			&\leq \eps \into \l ( \l|\nabla u_\lambda\r|^{p-2}\nabla u_\lambda + \mu(x)\l|\nabla u_\lambda\r|^{q-2}\nabla u_\lambda\r) \cdot \nabla  v \diff x\\
			&\quad +\eps \into \alpha(x) \l(u_\lambda\r)^{p-1} v \diff x+\eps \int_{\partial\Omega} \beta(x) \l(u_\lambda\r)^{p_* -1} v  \diff \sigma\\
			&\quad -\eps \into  \l( \zeta(x)\l(u_\lambda\r)^{-\kappa}+\lambda \l(u_\lambda\r)^{q_1-1}\r) v \diff x\\
			&\quad -\eps \int_{\l\{ u_\lambda+\eps v< 0\r\}}\l(\l|\nabla u_\lambda\r|^{p-2}\nabla u_\lambda + \mu(x)\l|\nabla u_\lambda\r|^{q-2}\nabla u_\lambda\r) \cdot \nabla v \diff x \\
			&\quad -\eps \int_{\l\{ u_\lambda+\eps v< 0\r\}} \alpha(x) \l(u_\lambda\r)^{p-1} v \diff x-\eps\int_{\l\{ u_\lambda+\eps v< 0\r\}} \beta(x) \l(u_\lambda\r)^{p_* -1} v \diff \sigma
	\end{align*}
	Note that the measure of the domain $\{u_\lambda+\eps h< 0\}$ tends to zero as $\eps \to 0$. Hence
	\begin{align*}
		& \int_{\l\{ u_\lambda+\eps v< 0\r\}}\l(\l|\nabla u_\lambda\r|^{p-2}\nabla u_\lambda + \mu(x)\l|\nabla u_\lambda\r|^{q-2}\nabla u_\lambda\r) \cdot \nabla v \diff x \to 0\quad\text{as }\eps \to 0,\\
		& \int_{\l\{ u_\lambda+\eps v< 0\r\}} \alpha(x) \l(u_\lambda\r)^{p-1} v \diff x\to 0\quad\text{as }\eps \to 0,\\
		& \int_{\l\{ u_\lambda+\eps v< 0\r\}} \beta(x) \l(u_\lambda\r)^{p_* -1} v \diff \sigma\to 0\quad\text{as }\eps \to 0.
	\end{align*}
	So, we can divide the inequality above with $\eps>0$ and let $\eps \to 0$. We obtain
	\begin{align*}
		\begin{split}
			& \into \Big(|\nabla u_\lambda|^{p-2} \nabla u_\lambda+ \mu(x) |\nabla u_\lambda|^{q-2} \nabla u_\lambda\Big) \cdot \nabla v \diff x\\
			& \quad +\into \alpha(x) (u_\lambda)^{p-1}v\diff x+\int_{\partial\Omega}\beta(x)(u_\lambda)^{p_* -1} v\diff \sigma \\
			&\geq \into \zeta(x)(u_\lambda)^{-\kappa} v \diff x+\lambda \into (u_\lambda )^{\nu-1}v\diff x.
		\end{split}
	\end{align*}
	Since $v\in\WH$ is arbitrary, equality must hold. It follows that $u_\lambda$ is a weak solution of problem \eqref{problem} such that $\Theta_\lambda(u_\lambda)<0$, 
	see Propositions \ref{proposition_negative_energy} and \ref{proposition_nonemptiness_and_existence}.
\end{proof}

Now we are interested in a second weak solution which turns out to be the global minimizer of $\Theta_\lambda$ restricted to $\mathcal{N}_\lambda^-$. First we show that this minimum is nonnegative.

\begin{proposition}\label{prop_minimizer_negative_manifold}
	If hypotheses \textnormal{(H)}  hold, then there exists $\lambda^* \in(0, \tilde{\lambda}]$ such that $\Theta_\lambda\big|_{\mathcal{N}^-_\lambda} > 0$ for all $\lambda \in(0, \lambda^*]$.
\end{proposition}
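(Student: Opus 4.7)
The plan is to bound $\Theta_\lambda(u)$ from below for $u\in \mathcal{N}_\lambda^-$ by two estimates that together force positivity once $\lambda$ is small. The first estimate shows $\Theta_\lambda(u) \ge c_1\rho(u) - c_2 \|u\|^{1-\kappa}$ with constants independent of $\lambda$; the second uses the strict inequality defining $\mathcal{N}_\lambda^-$ to conclude that $\|u\|$ must be very large when $\lambda$ is small. Since the power $p$ dominates $1-\kappa$, the first estimate then becomes positive on that lower bound and I am done.

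For the first estimate, I would use $u\in \mathcal{N}_\lambda \supseteq \mathcal{N}_\lambda^-$ to replace $\lambda\|u\|_{q_1}^{q_1}$ in $\Theta_\lambda(u)$ by $\|u\|_{1,p}^p+\|\nabla u\|_{q,\mu}^q+\|u\|_{p_*,\beta,\partial\Omega}^{p_*}-\into\zeta(x)|u|^{1-\kappa}\diff x$. Collecting terms gives
\begin{align*}
\Theta_\lambda(u)
&=\left[\tfrac{1}{p}-\tfrac{1}{q_1}\right]\|u\|_{1,p}^p
 +\left[\tfrac{1}{q}-\tfrac{1}{q_1}\right]\|\nabla u\|_{q,\mu}^q
 +\left[\tfrac{1}{p_*}-\tfrac{1}{q_1}\right]\|u\|_{p_*,\beta,\partial\Omega}^{p_*}\\
&\quad -\left[\tfrac{1}{1-\kappa}-\tfrac{1}{q_1}\right]\into\zeta(x)|u|^{1-\kappa}\diff x,
\end{align*}
where all three coefficients in front of the positive terms are positive thanks to hypothesis \textnormal{(H)(ii)}, and the last coefficient is positive as well. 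Bounding the singular integral via Theorem 13.17 of Hewitt--Stromberg and the continuous embedding $\WH \hookrightarrow L^{p^*}(\Omega)$ from Proposition \ref{proposition_embeddings}\textnormal{(ii)} gives $\into\zeta(x)|u|^{1-\kappa}\diff x \le c_2' \|u\|^{1-\kappa}$, so the desired estimate $\Theta_\lambda(u) \ge c_1 \rho(u) - c_2 \|u\|^{1-\kappa}$ follows with $c_1,c_2>0$ independent of $\lambda$.

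For the second estimate, observe that from the strict inequality defining $\mathcal{N}_\lambda^-$ and the fact that $\min\{p+\kappa-1, q+\kappa-1, p_*+\kappa-1\}=p+\kappa-1>0$, I obtain
\begin{align*}
(p+\kappa-1)\rho(u) \le \lambda (q_1+\kappa-1)\|u\|_{q_1}^{q_1} \le \lambda (q_1+\kappa-1)\, c_4\, \|u\|^{q_1},
\end{align*}
using again Proposition \ref{proposition_embeddings}\textnormal{(ii)} since $q_1<p^*$. If $\|u\|\ge 1$, Proposition \ref{proposition_modular_properties}\textnormal{(iv)} gives $\rho(u)\ge \|u\|^p$, whence $\|u\|^{q_1-p} \ge \frac{p+\kappa-1}{\lambda (q_1+\kappa-1) c_4}$, producing a lower bound $\|u\|\ge M(\lambda)$ with $M(\lambda)\to+\infty$ as $\lambda\to 0^+$. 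For $\|u\|<1$, the analogous bound uses $\rho(u)\ge \|u\|^q$ and yields $\|u\|\ge M'(\lambda)\to+\infty$; taking $\lambda^*$ small enough we are forced into the regime $\|u\|\ge 1$ anyway.

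Combining, for $\lambda$ small enough that $M(\lambda) \ge \max\{1,\,(c_2/c_1)^{1/(p+\kappa-1)}+1\}$, every $u\in\mathcal{N}_\lambda^-$ satisfies $\|u\|\ge 1$ and
\begin{align*}
\Theta_\lambda(u) \ge c_1 \|u\|^p - c_2 \|u\|^{1-\kappa} = \|u\|^{1-\kappa}\bigl(c_1 \|u\|^{p+\kappa-1} - c_2\bigr) > 0,
\end{align*}
since $p+\kappa-1>0$. Choosing $\lambda^* \in (0,\tilde\lambda]$ to be the largest such value completes the proof. The main obstacle I anticipate is merely the bookkeeping of the two regimes $\|u\|\ge 1$ and $\|u\|<1$ in the modular estimate; the structural idea (exploit that $\mathcal{N}_\lambda^-$ forces solutions to have large norm as $\lambda\to 0^+$, while the coercive leading order $p$ beats the subcritical singular order $1-\kappa$) is the natural counterpart to the computation carried out in Proposition \ref{proposition_negative_energy} for $\mathcal{N}_\lambda^+$.
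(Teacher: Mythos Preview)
Your argument is correct and follows essentially the same strategy as the paper: both proofs combine the identity coming from $u\in\mathcal{N}_\lambda$ (yielding the coercive-type estimate you wrote, which is precisely \eqref{prop_1a}) with the strict inequality defining $\mathcal{N}_\lambda^-$ (forcing the relevant norm to blow up as $\lambda\to 0^+$). The only cosmetic differences are that the paper phrases the last step as a contradiction (assuming $\Theta_\lambda(u)\le 0$ and deriving a $\lambda$-independent upper bound on $\|u\|_{1,p}$, which clashes with the lower bound \eqref{prop_23}), and that it works throughout with the $W^{1,p}$-norm $\|\cdot\|_{1,p}$ rather than the Musielak--Orlicz norm and modular; your direct formulation via $\rho(u)$ and Proposition~\ref{proposition_modular_properties} is equally valid and slightly more in line with the computation already done in Proposition~\ref{proposition_coerivity}.
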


\begin{proof}
	First we take $u \in \mathcal{N}_\lambda^-$ which is possible by Proposition \ref{proposition_nonemptiness_and_existence}. Applying the definition of $\mathcal{N}_\lambda^-$ and the embedding $\Wp{p}\hookrightarrow \Lp{q_1}$ we have
	\begin{align*}
		\begin{split}
			 \lambda (q_1+\kappa-1)c_9^{q_1} \|u\|_{1,p}^{q_1} &\geq \lambda (q_1+\kappa-1) \|u\|_{q_1}^{q_1}\\
			&>(p+\kappa-1)\|u\|_{1,p}^p+(q+\kappa-1)\|\nabla u\|_{q,\mu}^q+(p_* +\kappa-1)\| u\|_{p_* ,\beta,\partial\Omega}^{p_* }\\
			&\geq (p+\kappa-1) \|u\|_{1,p}^p
		\end{split}
	\end{align*}
	for some constant $c_9>0$. This implies
	\begin{equation}\label{prop_23}
		\|u\|_{1,p}\geq
		\displaystyle \l[\frac{p+\kappa-1}{\lambda c_9^{q_1}(q_1+\kappa-1)}\r]^{\frac{1}{q_1-p}}.
	\end{equation}

	We argue by contradiction and suppose that the statement of the proposition is not true. Then there exists $u \in \mathcal{N}_\lambda^-$ such that $\Theta_\lambda(u)\leq 0$, that is,
	\begin{equation}\label{prop_24}
		\frac{1}{p}\|u\|_{1,p}^p +\frac{1}{q} \|\nabla u\|_{q,\mu}^q+\frac{1}{p_* } \| u\|_{p_* ,\beta,\partial\Omega}^{p_* }-\frac{1}{1-\kappa} \into\zeta(x)|u|^{1-\kappa}\diff x-\frac{\lambda}{q_1}\|u\|_{q_1}^{q_1}\leq 0.
	\end{equation}
	As $\mathcal{N}_\lambda^-\subseteq \mathcal{N}_\lambda$ we obtain from the definition of $\mathcal{N}_\lambda$
	\begin{equation}\label{prop_25}
		\frac{1}{q_1}\|\nabla u\|_{q,\mu}^q =\frac{1}{q_1}\into\zeta(x)|u|^{1-\kappa}\diff x+\frac{\lambda}{q_1} \|u\|_{q_1}^{q_1} -\frac{1}{q_1}\|u\|_{1,p}^p-\frac{1}{q_1}\|u\|_{p_* ,\beta,\partial\Omega}^{p_* }.
	\end{equation}
	Combining \eqref{prop_25} and \eqref{prop_24} one has
	\begin{align*}
		&\l(\frac{1}{p}-\frac{1}{q_1}\r)\| u\|_{1,p}^p+\l(\frac{1}{q}-\frac{1}{q_1}\r)\| \nabla u\|_{q,\mu}^q+\l(\frac{1}{p_* }-\frac{1}{q_1}\r)\|u\|_{p_* ,\beta,\partial\Omega}^{p_* }\\
		&+
		\l(\frac{1}{q_1}-\frac{1}{1-\kappa}\r)\into\zeta(x)|u|^{1-\kappa}\diff x\leq 0.
	\end{align*}
	Since $p_* <q_1$ and $q<q_1$ this implies
	\begin{align*}
		\frac{q_1-p}{pq_1}\|u\|_{1,p}^{p}\leq  \frac{q_1+\kappa -1}{q_1(1-\kappa)}\into\zeta(x) |u|^{1-\kappa}\diff x \leq \frac{q_1+\kappa -1}{q_1(1-\kappa)}c_{10}\|u\|_{1,p}^{1-\kappa}
	\end{align*}
	for some constant $c_{10}>0$. Therefore,
	\begin{equation}\label{prop_26}
		\|u\|_{1,p} \leq c_{11}
	\end{equation}
	for some $c_{11}>0$. Now we use \eqref{prop_26}  in \eqref{prop_23} and get
	\begin{align*}
		0<\frac{c_{12}}{c_{11}} \leq \lambda^{\frac{1}{q_1-p}}\quad\text{with}
		\quad \displaystyle c_{12}=\l[\frac{(p+\kappa-1)}{c_9^{q_1}(q_1+\kappa-1)}\r]^{\frac{1}{q_1-p}}>0.
	\end{align*}
	Letting $\lambda\to 0$ yields a contradiction since $1<p<q_1$. Hence, we find $\lambda^* \in(0, \tilde{\lambda}]$ such that $\Theta_\lambda\big|_{\mathcal{N}^-_\lambda} > 0$ for all $\lambda \in(0, \lambda^*]$.
\end{proof}

Next we will show that the functional $\Theta_\lambda$ achieves its global minimum restricted to the set $\mathcal{N}_\lambda^-$.

\begin{proposition}\label{proposition_minimize_negative}
	If hypotheses \textnormal{(H)}  hold and $\lambda \in(0, \lambda^*]$, then there exists $v_\lambda\in\mathcal{N}_\lambda^-$ with $v_\lambda\geq 0$ such that
	\begin{align*}
		m_\lambda^-=\inf_{\mathcal{N}_\lambda^-}\Theta_\lambda=\Theta_\lambda\l(v_\lambda\r)>0.
	\end{align*}
\end{proposition}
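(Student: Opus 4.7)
My plan is to apply the direct method of the calculus of variations combined with a fibering rescaling that lifts the weak limit back onto $\mathcal{N}_\lambda^-$. Let $\{v_n\}_{n\in\N}\subset\mathcal{N}_\lambda^-$ be a minimizing sequence for $\Theta_\lambda$; since $\Theta_\lambda(|u|)=\Theta_\lambda(u)$ and the defining conditions of $\mathcal{N}_\lambda^\pm$ are invariant under $u\mapsto |u|$, I may assume $v_n\geq 0$. By Proposition~\ref{proposition_coerivity} the sequence is bounded in $\WH$, so up to a subsequence $v_n\rightharpoonup w$ in $\WH$, $v_n\to w$ in $\Lp{q_1}$ (compact embedding, as $q_1<p^*$), $v_n\rightharpoonup w$ in $\Lprand{p_*}$, and $v_n(x)\to w(x)$ for a.a.\ $x\in\Omega$; in particular $w\geq 0$. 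For the non-triviality of $w$ I would re-use the opening estimate of the proof of Proposition~\ref{prop_minimizer_negative_manifold}: the definition of $\mathcal{N}_\lambda^-$ combined with the embedding $\Wp{p}\hookrightarrow\Lp{q_1}$ yields a uniform lower bound $\|v_n\|_{q_1}\geq c>0$, which by the strong $\Lp{q_1}$-convergence transfers to $w$.

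Since $\lambda\in(0,\lambda^*]\subseteq(0,\tilde\lambda]$ and $w\not\equiv 0$, the fibering analysis of Proposition~\ref{proposition_nonemptiness_and_existence} applied to $w$ yields a unique $t^2_w>0$ such that $t^2_w w\in\mathcal{N}_\lambda^-$, so that automatically $\psi_w(t^2_w)=\Theta_\lambda(t^2_w w)\geq m_\lambda^-$. The key observation is that for every $v_n\in\mathcal{N}_\lambda^-$ the fibering function satisfies the global bound
\begin{equation*}
	\psi_{v_n}(t)\leq \psi_{v_n}(1)=\Theta_\lambda(v_n)\quad\text{for all }t\geq 0.
\end{equation*}
Indeed, $\psi_{v_n}(0)=0$, $\psi_{v_n}$ is strictly decreasing on $(0,t^1_{v_n}]$, strictly increasing on $[t^1_{v_n},1]$ (note $1=t^2_{v_n}$) and strictly decreasing on $[1,\infty)$, while $\Theta_\lambda(v_n)>0$ by Proposition~\ref{prop_minimizer_negative_manifold}; therefore $\psi_{v_n}(1)$ dominates both $\psi_{v_n}(0)=0$ and the value at infinity.

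Specialising this bound to $t=t^2_w$ and passing to the limit, I would combine (i) weak lower semicontinuity of $\|\cdot\|_{1,p}^p$, $\|\nabla\cdot\|_{q,\mu}^q$ and $\|\cdot\|_{p_*,\beta,\partial\Omega}^{p_*}$, (ii) the strong convergence $\|v_n\|_{q_1}^{q_1}\to\|w\|_{q_1}^{q_1}$, and (iii) $\int_\Omega\zeta(x)|v_n|^{1-\kappa}\diff x\to \int_\Omega\zeta(x)|w|^{1-\kappa}\diff x$, which follows from Vitali's convergence theorem using $v_n\to w$ a.e., the uniform $\Lp{q_1}$-bound and $1-\kappa<q_1$, to obtain $\psi_w(t^2_w)\leq \liminf_{n\to\infty}\psi_{v_n}(t^2_w)\leq m_\lambda^-$. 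Combined with the reverse inequality above, this forces $\psi_w(t^2_w)=m_\lambda^-$, so $v_\lambda:=t^2_w w\in\mathcal{N}_\lambda^-$ is the desired nonnegative minimizer, and $m_\lambda^->0$ is immediate from Proposition~\ref{prop_minimizer_negative_manifold}. The main obstacle is that, unlike for $\mathcal{N}_\lambda^+$ in Proposition~\ref{proposition_nonemptiness_and_existence}, one cannot hope for strong convergence $v_n\to w$ in $\WH$: the boundary integral is at the critical exponent $p_*$, so only weak $\Lprand{p_*}$-convergence is available, and the ``three-claim'' argument used on $\mathcal{N}_\lambda^+$ crucially exploited $\psi_{v_n}''(1)>0$, which has the opposite sign here. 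The fibering rescaling sidesteps this by evaluating $\psi_{v_n}$ at the auxiliary point $t^2_w$ rather than at $t=1$, turning the weak lower semicontinuity of the three (semi)norms into the correct direction of inequality.
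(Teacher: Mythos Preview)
Your argument is correct and hinges on the same inequality the paper exploits, namely $\psi_{v_n}(t)\leq\psi_{v_n}(1)=\Theta_\lambda(v_n)$ for every $t\geq 0$ whenever $v_n\in\mathcal{N}_\lambda^-$ and $\lambda\in(0,\lambda^*]$. The organisation, however, is different. Contrary to your closing remark, the paper \emph{does} run a three-claim argument on $\mathcal{N}_\lambda^-$: it is not the $\mathcal{N}_\lambda^+$ mechanism (which used $\psi_{u_n}''(1)>0$ to force $t^1_{u_\lambda}>1$) but precisely the global-maximum property $\psi_{v_n}(t^2_{v_\lambda})\leq\psi_{v_n}(1)$---available \emph{because} $\psi_{v_n}''(1)<0$---that drives the contradiction $m_\lambda^-\leq\Theta_\lambda(t^2_{v_\lambda}v_\lambda)<\lim_n\Theta_\lambda(t^2_{v_\lambda}v_n)\leq m_\lambda^-$ whenever a claim fails. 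From the three claims the paper extracts $\rho(v_n)\to\rho(v_\lambda)$ and hence, via uniform convexity of the modular, strong convergence $v_n\to v_\lambda$ in $\WH$; the minimizer is then the weak limit $w$ itself, with membership $w\in\mathcal{N}_\lambda^-$ recovered by passing to the limit in the defining strict inequality and invoking Proposition~\ref{proposition_emptiness}. Your route bypasses strong convergence entirely by declaring $v_\lambda:=t^2_w\,w$ and sandwiching $\Theta_\lambda(t^2_w w)$ directly between $m_\lambda^-$ and $\liminf_n\psi_{v_n}(t^2_w)$; this is shorter and avoids both the uniform-convexity step and the appeal to $\mathcal{N}_\lambda^\circ=\emptyset$, whereas the paper's version yields the additional information $t^2_w=1$ and $v_n\to v_\lambda$ strongly in $\WH$.
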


\begin{proof}
	First note that for $v\in \mathcal{N}_\lambda^-$ by using the embedding $\Wp{p} \hookrightarrow \Lp{q_1}$ we obtain
	\begin{align*}
		\begin{split}
		 &\lambda (q_1+\kappa-1) \|v\|_{q_1}^{q_1}
		 >(p+\kappa-1)\|v\|_{1,p}^p \geq (p+\kappa-1)\frac{1}{c_9^p}\|v\|_{q_1}^p
		\end{split}
	\end{align*}
	for $c_9>0$, see the proof of Proposition \ref{prop_minimizer_negative_manifold}. Therefore,
	\begin{equation}\label{prop_235}
		\|v\|_{q_1}\geq
		\displaystyle \l[\frac{p+\kappa-1}{\lambda c_9^{p}(q_1+\kappa-1)}\r]^{\frac{1}{q_1-p}}.
	\end{equation}
	
	Let $\{v_n\}_{n\in\N}\subset \mathcal{N}_\lambda^- \subset \mathcal{N}_\lambda$ be a minimizing sequence. Then, since $\mathcal{N}_\lambda^-\subset \mathcal{N}_\lambda$, we know from Proposition \ref{proposition_coerivity} that $\{v_n\}_{n\in\N}\subset \WH$ is bounded. We may assume that
	\begin{align*}
		v_n\weak v_\lambda \quad\text{in }\WH, \quad 
		v_n\to v_\lambda \quad\text{in }\Lp{q_1} \quad\text{and}\quad v_n\weak v_\lambda\quad\text{in }\Lprand{p_* }.
	\end{align*}
	From \eqref{prop_235} we see that $v_\lambda \neq 0$. 
	Now we will use the point $t^2_{v_\lambda}>0$ (see \eqref{prop_8}) for which we have
	\begin{align*}
		\eta_{v_\lambda}\l(t^2_{v_\lambda}\r)=\lambda \l\|v_\lambda\r\|_{q_1}^{q_1}\quad\text{and}\quad \eta'_{v_\lambda}\l(t^2_{v_\lambda}\r)<0.
	\end{align*}
	In the proof of Proposition \ref{proposition_nonemptiness_and_existence} we derived that $t^2_{v_\lambda} v_\lambda \in \mathcal{N}_\lambda^-$.
	
	Let us now show that $\lim_{n\to+\infty} \rho(v_n)=\rho(v_\lambda)$ for a subsequence (still denoted by $v_n$). As in the proof of Proposition \ref{proposition_nonemptiness_and_existence} we can prove Claims 1--3 via contradiction. Indeed, then we have in each case for a subsequence
	\begin{align*}
		\Theta_\lambda(t^2_{v_\lambda} v_\lambda)< \lim_{n\to\infty} \Theta_\lambda(t^2_{v_\lambda} v_n).
	\end{align*}
	Since $\Theta_\lambda(t^2_{v_\lambda}v_n) \leq \Theta_\lambda(v_n)$ (note that it is the global maximum since $\psi_{v_n}''(1)<0$, see Figure \ref{figure_1}) and $t^2_{v_\lambda} v_\lambda \in \mathcal{N}_\lambda^-$, we obtain
	\begin{align*}
		m_\lambda^- \leq \Theta_\lambda(t^2_{v_\lambda} v_\lambda) < m^-_\lambda,
	\end{align*}
	a contradiction. Therefore, for a subsequence, we have $\lim_{n\to+\infty} \rho(v_n)=\rho(v_\lambda)$ and since the integrand corresponding to the modular function $\rho(\cdot)$ is uniformly convex, this implies $\rho (\frac{v_n-v_\lambda}{2})\to 0$. Then Proposition \ref{proposition_modular_properties}\textnormal{(v)} implies that $v_n \to v_\lambda$ in $\WH$ and the continuity of $\Theta_\lambda$ gives  $\Theta_{\lambda}(v_n)\to \Theta_{\lambda}(v_\lambda)$ and so $\Theta_{\lambda}(v_\lambda)=m^-_{\lambda}$. 
	
	Since $v_n\in \mathcal{N}^-_{\lambda}$ for all $n\in \N$, we have
	\begin{align}\label{prop_15j2}
		\begin{split}
			&(p+\kappa-1)\|v_n\|_{1,p}^p+(q+\kappa-1)\|\nabla v_n\|_{q,\mu}^q+(p_* +\kappa-1)\| v_n\|_{p_* ,\beta,\partial\Omega}^{p_* }\\
			& -\lambda (q_1+\kappa-1) \|v_n\|_{q_1}^{q_1}<0.
		\end{split}	
	\end{align}
	Now we pass to the limit in \eqref{prop_15j2} as $n\to+\infty$ in order to get
	\begin{equation}\label{prop_162}
		(p+\kappa-1)\|v_\lambda\|_{1,p}^p+(q+\kappa-1)\|\nabla v_\lambda\|_{q,\mu}^q+(p_* +\kappa-1)\| v_\lambda\|_{p_* ,\beta,\partial\Omega}^{p_* }-\lambda (q_1+\kappa-1) \|v_\lambda\|_{q_1}^{q_1}\leq 0.
	\end{equation}
	From Proposition \ref{proposition_emptiness} we know that equality in \eqref{prop_162} cannot happen, so we have a strict inequality. Therefore, $v_\lambda\in \mathcal{N}^-_{\lambda}$. Since the treatment also works for $|v_\lambda|$ instead of $v_\lambda$, we may assume that $v_\lambda(x)\geq 0$ for a.\,a.\,$x\in\Omega$ such that $v_\lambda\neq 0$. Proposition \ref{prop_minimizer_negative_manifold} finally shows that $m_\lambda^->0$.
\end{proof}

Finally, we reach a second weak solution of problem \eqref{problem}.

\begin{proposition}\label{proposition_second_weak_solution}
	If hypotheses \textnormal{(H)}  hold and  $\lambda \in(0, \lambda^*)$, then $v_\lambda$ is a weak solution of problem \eqref{problem} such that $\Theta_\lambda(v_\lambda)>0$.
\end{proposition}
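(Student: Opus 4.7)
The strategy mirrors that of Proposition \ref{proposition_first_weak_solution}, but since $v_\lambda$ minimizes on $\mathcal{N}_\lambda^-$ rather than $\mathcal{N}_\lambda^+$, the energy comparison $\Theta_\lambda(v_\lambda)\leq \Theta_\lambda(v_\lambda + th)$ of Proposition \ref{proposition_energy_estimate} is no longer available (it already fails in the fibering direction $h=v_\lambda$, where $\psi_{v_\lambda}$ has a local \emph{maximum} at $1$). The plan is to replace the direct comparison by a Nehari-adjusted comparison along a curve $\chi(t)(v_\lambda+th)$ remaining in $\mathcal{N}_\lambda^-$, and to exploit the Nehari identity $\psi_{v_\lambda}'(1)=0$ to cancel the extra contributions carried by $\chi'(0)$.

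Repeating the implicit function theorem argument of Proposition \ref{proposition_energy_estimate} verbatim on $\mathcal{N}_\lambda^-$ (this works equally well because $\psi_{v_\lambda}''(1)<0$ is nonzero), I obtain for every $h\in\WH$ a continuous function $\chi$ on a neighborhood of $0$ with $\chi(0)=1$ and $\chi(t)(v_\lambda+th)\in\mathcal{N}_\lambda^-$ for small $|t|$. By Proposition \ref{proposition_minimize_negative}, $\Theta_\lambda(\chi(t)(v_\lambda+th))\geq\Theta_\lambda(v_\lambda)$, so the difference quotient is nonnegative as $t\to 0^+$.

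To prove $v_\lambda>0$ a.e.\ in $\Omega$, argue by contradiction: if $B:=\{v_\lambda=0\}$ has positive measure, pick $h>0$ in $\WH$. The smooth part of the energy contributes a bounded term to the difference quotient, whereas on $B$ we have $\chi(t)(v_\lambda+th)=\chi(t)\,t\,h$ and the singular piece equals $-\tfrac{\chi(t)^{1-\kappa}t^{1-\kappa}}{1-\kappa}\int_B \zeta(x)h^{1-\kappa}\diff x$; dividing by $t$ this tends to $-\infty$ as $t\to 0^+$, contradicting nonnegativity. With $v_\lambda>0$ a.e.\ secured, Fatou's lemma applies exactly as in the proof of Proposition \ref{proposition_first_weak_solution}. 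Expanding $\Theta_\lambda(\chi(t)(v_\lambda+th))-\Theta_\lambda(v_\lambda)$, dividing by $t$ and taking $\limsup$, the smooth part contributes its genuine derivative while Fatou provides an upper bound for the singular part; the crucial observation is that the $\chi'(0)$-coefficients arising from the two pieces combine into $\chi'(0)\,\psi_{v_\lambda}'(1)$ and therefore vanish thanks to $v_\lambda\in\mathcal{N}_\lambda$. Combined with the nonnegativity of the quotient this yields $\zeta v_\lambda^{-\kappa}h\in L^1(\Omega)$ and the variational inequality $\langle A(v_\lambda),h\rangle_\mathcal{H}\geq \into \zeta(x)v_\lambda^{-\kappa}h\diff x + \lambda\into v_\lambda^{q_1-1}h\diff x$ for every $h\geq 0$ in $\WH$.

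The upgrade from inequality to equality, for arbitrary $w\in\WH$, is then obtained by testing with $h=(v_\lambda+\eps w)_+$ and letting $\eps\to 0^+$, exactly as in the concluding computation of the proof of Proposition \ref{proposition_first_weak_solution} (the sets $\{v_\lambda+\eps w<0\}$ shrink to null measure, so all integrals on them vanish in the limit). The strict inequality $\Theta_\lambda(v_\lambda)>0$ is already furnished by Proposition \ref{prop_minimizer_negative_manifold}. The main obstacle throughout is the careful bookkeeping of the singular term along the $\chi(t)$-rescaled path; the heart of the proof is recognizing that the Nehari identity forces precisely the cancellation of the $\chi'(0)$-terms needed for a fibering maximum to still yield a genuine critical point.
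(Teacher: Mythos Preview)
Your overall architecture is correct and matches the paper's: build a Nehari-adjusted path $\chi(t)(v_\lambda+th)\in\mathcal{N}_\lambda^-$ via the implicit function theorem, use minimality on $\mathcal{N}_\lambda^-$ to get nonnegativity of the difference quotient, deduce positivity of $v_\lambda$, then run Fatou and the $(v_\lambda+\eps w)_+$ test. The difference lies in how you neutralize the $\chi$-variation, and there your argument has a genuine gap.

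You invoke $\chi'(0)$ and claim the resulting coefficients assemble into $\chi'(0)\,\psi_{v_\lambda}'(1)=0$. But the existence of $\chi'(0)$ is not available a~priori: the implicit function $\chi$ is determined by $\varphi(th,\chi(t))=0$, and while $\varphi$ is $C^1$ in its second argument, its $t$-dependence goes through $\int_\Omega\zeta(x)(v_\lambda+th)^{1-\kappa}\diff x$, whose differentiability at $t=0$ is equivalent to $\zeta v_\lambda^{-\kappa}h\in L^1(\Omega)$ --- exactly the integrability you are trying to establish. Without that, one only gets $\chi(t)-1=O(t^{1-\kappa})$ from H\"older continuity of the singular term, so $\tfrac{\chi(t)-1}{t}$ may blow up and the ``cancellation of $\chi'(0)$-terms'' is not well defined. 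The same issue already bites in your positivity step, where you assert that ``the smooth part contributes a bounded term'': boundedness of that quotient again needs $\tfrac{\chi(t)-1}{t}$ bounded.

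The paper avoids this circularity with a single extra inequality that costs nothing: since $v_\lambda\in\mathcal{N}_\lambda^-$, the fibering map $\psi_{v_\lambda}$ attains its \emph{global} maximum at $t=1$ (see Figure~\ref{figure_1}), so $\Theta_\lambda(v_\lambda)=\psi_{v_\lambda}(1)\geq\psi_{v_\lambda}(\chi(t))=\Theta_\lambda(\chi(t)v_\lambda)$ for every $t$. Hence
\[
0\ \leq\ \frac{\Theta_\lambda(\chi(t)(v_\lambda+th))-\Theta_\lambda(v_\lambda)}{t}\ \leq\ \frac{\Theta_\lambda(\chi(t)(v_\lambda+th))-\Theta_\lambda(\chi(t)v_\lambda)}{t},
\]
and on the right $\chi(t)$ is a common scaling factor in every term, so only its continuity ($\chi(t)\to 1$) is needed --- no $\chi'(0)$ anywhere. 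Both the positivity argument and the Fatou step then proceed exactly as in Proposition~\ref{proposition_first_weak_solution} with $\chi(t)$ riding along as a harmless multiplicative constant tending to $1$. Replacing your $\chi'(0)$-cancellation by this global-maximum inequality closes the gap without altering the rest of your plan.
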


\begin{proof}
	Following the proof of Proposition \ref{proposition_energy_estimate} replacing $u_\lambda$ by $v_\lambda$ in the definition of $\Xi_h$ we can show for every $t \in [0,\delta_0]$ there exists $\chi(t)>0$ such that
	\begin{align*}
		\chi(t)\l(v_\lambda+th\r)\in \mathcal{N}_\lambda^-
		\quad\text{and}\quad
		\chi(t) \to 1 \quad\text{as } t\to 0^+.
	\end{align*}
	From Proposition \ref{proposition_minimize_negative} we derive that
	\begin{align}\label{prop_200}
		m_\lambda^-=\Theta_\lambda \l(v_\lambda\r) \leq \Theta_\lambda \l(\chi(t)\l(v_\lambda+th\r)\r)\quad\text{for all } t \in [0,\delta_0].
	\end{align}
	
	{\bf Claim:} $v_\lambda> 0$ for a.\,a.\,$x\in\Omega$
	
	Let us suppose there is a set $B$ with positive measure such that $v_\lambda=0$ in $B$. Let $h\in \WH$ with $h > 0$ and let $t\in (0,\delta_0)$, see \eqref{prop_200}, then $(\chi(t)(v_\lambda+th))^{1-\kappa}>(\chi(t)v_\lambda)^{1-\kappa}$ a.\,e.\,in $\Omega\setminus B$. From \eqref{prop_200} and since $\psi_{v_\lambda}(1)$ is the global maximum (see Figure \ref{figure_1}) which implies $\psi_{v_\lambda}(1) \geq \psi_{v_\lambda}(\chi(t))$ we then obtain 
	\begin{align*}
		0
		&\leq \frac{\Theta_\lambda(\chi(t)(v_\lambda+th))-\Theta_\lambda(v_\lambda)}{t} \\
		&\leq \frac{\Theta_\lambda(\chi(t)(v_\lambda+th))-\Theta_\lambda(\chi(t)v_\lambda)}{t}\\
		&=\frac{1}{p} \frac{\|\chi(t)(v_\lambda+th)\|_{1,p}^p-\|\chi(t)v_\lambda\|_{1,p}^p}{t} +\frac{1}{q} \frac{\|\nabla (\chi(t)(v_\lambda+th))\|_{q,\mu}^q-\|\nabla (\chi(t)v_\lambda)\|_{q,\mu}^q}{t}\\
		& \quad +\frac{1}{p_* } \frac{\|\chi(t)(v_\lambda+th)\|_{p_* ,\beta,\partial\Omega}^{p_* }-\|\chi(t)v_\lambda\|_{p_* ,\beta,\partial\Omega}^{p_* }}{t}-\frac{\chi(t)^{1-\kappa}}{(1-\kappa)t^{\kappa}} \int_B \zeta(x)h^{1-\kappa} \diff x\\
		&\quad - \frac{1}{1-\kappa}\int_{\Omega\setminus B} \zeta(x)\frac{(\chi(t)(v_\lambda+th))^{1-\kappa}-(\chi(t)v_\lambda)^{1-\kappa}}{t}\diff x\
		-\frac{\lambda}{q_1}
		\frac{\|\chi(t)(v_\lambda+th)\|_{q_1}^{q_1}-\|\chi(t)v_\lambda\|_{q_1}^{q_1}}{t}\\
		&<\frac{1}{p} \frac{\|\chi(t)(v_\lambda+th)\|_{1,p}^p-\|\chi(t)v_\lambda\|_{1,p}^p}{t} +\frac{1}{q} \frac{\|\nabla (\chi(t)(v_\lambda+th))\|_{q,\mu}^q-\|\nabla (\chi(t)v_\lambda)\|_{q,\mu}^q}{t}\\
		&\quad +\frac{1}{p_* } \frac{\|\chi(t)(v_\lambda+th)\|_{p_* ,\beta,\partial\Omega}^{p_* }-\|\chi(t)v_\lambda\|_{p_* ,\beta,\partial\Omega}^{p_* }}{t}
		-\frac{\chi(t)^{1-\kappa}}{(1-\kappa)t^{\kappa}} \int_B \zeta(x)h^{1-\kappa} \diff x\\
		&\quad -\frac{\lambda}{q_1}
		\frac{\|\chi(t)(v_\lambda+th)\|_{q_1}^{q_1}-\|\chi(t)v_\lambda\|_{q_1}^{q_1}}{t}.
	\end{align*}
	From the considerations above we see that
	\begin{align*}
		0
		&\leq \frac{\Theta_\lambda(\chi(t)(v_\lambda+th))-\Theta_\lambda(\chi(t)v_\lambda)}{t} \to -\infty \quad \text{as }t\to 0^+,
	\end{align*}
	which is a contradiction. This shows the Claim and so $v_\lambda>0$ a.\,e.\,in $\Omega$.
	
	The rest of the proof  works in the same way as the proof of Proposition \ref{proposition_first_weak_solution}. Indeed, \eqref{def1} and \eqref{def2} can be shown similarly using again  \eqref{prop_200} and the inequality $\psi_{v_\lambda}(1) \geq \psi_{v_\lambda}(\chi(t))$ along with $v_\lambda>0$. The last part of Proposition \ref{proposition_first_weak_solution} is the same replacing $u_\lambda$ by $v_\lambda$ and finally we know from Proposition \ref{proposition_minimize_negative} that $\Theta_\lambda(v_\lambda)>0$. 
\end{proof}

The proof of Theorem \ref{main_result} follows now from Propositions \ref{proposition_first_weak_solution} and \ref{proposition_second_weak_solution}.

\section*{Acknowledgment}

The first author was funded by the Deutsche Forschungsgemeinschaft (DFG, German Research Foundation) under Germany's Excellence Strategy – The Berlin Mathematics Research Center MATH+ and the Berlin Mathematical School (BMS) (EXC-2046/1, project ID: 390685689).


\end{document}